\documentclass[10pt,reqno]{amsart}  
\usepackage[english]{babel}
\usepackage[p,osf]{cochineal}
\usepackage[scale=.95,type1]{cabin}
\usepackage[zerostyle=c,scaled=.94]{newtxtt}
\usepackage[T1]{fontenc} 
\usepackage[utf8]{inputenc} 
\usepackage{amsthm}

\usepackage{amsmath}
\usepackage{amssymb}
\usepackage{pgfplots} 
\pgfplotsset{compat=newest} 
\usepgfplotslibrary{fillbetween,colorbrewer}
\usepackage{amsfonts}
\usepackage{amsaddr}
\usepackage{xspace,subcaption}
\usepackage{enumitem} 
\usepackage{csquotes}
\usepackage{xcolor}
\usepackage[colorlinks]{hyperref}
\pdfstringdefDisableCommands{\def\eqref#1{(\ref{#1})}}
\hypersetup{colorlinks, breaklinks, citecolor=teal,filecolor=black}
\usepackage[nosort,capitalise]{cleveref}
\crefname{enumi}{}{}
\crefname{equation}{}{}
\usepackage{graphicx}
\usepackage{mathtools}
\mathtoolsset{centercolon}
\usepackage{bm}
\usepackage{pgfkeys}
\usepackage{pgf,interval}
\intervalconfig{soft open fences}

\DeclareMathOperator{\Tr}{Tr}

\DeclareMathOperator{\rank}{rank}

\DeclareMathOperator{\E}{\mathbf{E}}
\DeclareMathOperator{\Prob}{\mathbf{P}}

\DeclareMathOperator{\Av}{Av}

\newcommand{\ov}{\overline}
\newcommand{\ii}{\mathrm{i}}

\ifdefined\C
\renewcommand{\C}{\mathbf{C}}
\else
\newcommand{\C}{\mathbf{C}}
\fi

\newcommand{\cI}{\mathcal{I}}
\newcommand{\un}{\underline}
\newcommand{\vx}{\bm{x}}

\newcommand{\vl}{{\bm{l}}}
\newcommand{\vu}{{\bm{u}}}
\newcommand{\vj}{{\bm{j}}}

\newcommand{\vy}{\bm{y}}

\newcommand{\wt}{\widetilde}
\newcommand{\wh}{\widehat}

\newcommand{\cE}{\mathcal{E}}
\newcommand{\av}{\mathrm{av}}
\newcommand{\iso}{\mathrm{iso}}
\newcommand{\R}{\mathbf{R}}
\newcommand{\N}{\mathbf{N}}

\newcommand{\cG}{\mathcal{G}}
\newcommand{\cO}{\mathcal{O}}
\newcommand{\co}{{\scriptstyle\mathcal{O}}}

\newcommand{\dif}{\operatorname{d}\!{}}
\DeclarePairedDelimiter{\braket}{\langle}{\rangle}%
\DeclarePairedDelimiter{\abs}{\lvert}{\rvert}%
\DeclarePairedDelimiter{\norm}{\lVert}{\rVert}%
\providecommand\given{}
\newcommand\SetSymbol[1][]{\nonscript\:#1\vert\allowbreak\nonscript\:\mathopen{}}
\DeclarePairedDelimiterX{\tuple}[1](){\renewcommand\given{\SetSymbol[\delimsize]}#1}
\DeclarePairedDelimiterX{\set}[1]\{\}{\renewcommand\given{\SetSymbol[\delimsize]}#1}
\DeclarePairedDelimiterXPP{\landauO}[1]{\cO}(){}{#1}
\DeclarePairedDelimiterXPP{\landauo}[1]{\co}(){}{#1}
\DeclarePairedDelimiterXPP{\landauOprec}[1]{\cO_\prec}(){}{#1}

\usepackage[giveninits=true,url=false,doi=false,isbn=false,eprint=true,datamodel=mrnumber,sorting=nty,maxcitenames=4,maxbibnames=99,backref=false,block=space,backend=biber,bibstyle=phys]{biblatex} 
\AtEveryBibitem{\clearfield{month}}
\AtEveryCitekey{\clearfield{month}} 
\renewbibmacro{in:}{}
\ExecuteBibliographyOptions{eprint=true}
\DeclareFieldFormat[article]{title}{\emph{#1}} 
\DeclareFieldFormat{mrnumber}{\ifhyperref{\href{http://www.ams.org/mathscinet-getitem?mr=#1}{\nolinkurl{MR#1}}}{\nolinkurl{#1}}}
\DeclareFieldFormat{pmid}{\ifhyperref{\href{https://www.ncbi.nlm.nih.gov/pubmed/#1}{\nolinkurl{PMID#1}}}{\nolinkurl{#1}}}
\DeclareFieldFormat{eprint}{\ifhyperref{\href{https://arxiv.org/abs/#1}{\nolinkurl{arXiv:#1}}}{\nolinkurl{#1}}}
\renewbibmacro*{doi+eprint+url}{%
  \iftoggle{bbx:doi}{\printfield{doi}}{}
  \newunit\newblock%
  \printfield{mrnumber}%
  \newunit\newblock%
  \printfield{pmid}%
  \newunit\newblock%
  \printfield{eprint}%
  \iftoggle{bbx:url}{\usebibmacro{url+urldate}}{}}
\newlist{genprop}{enumerate}{1}
\bibliography{bibclt}

\setlength{\textwidth}{\paperwidth}
\addtolength{\textwidth}{-2in}
\setlength{\textheight}{\paperheight}
\addtolength{\textheight}{-2in}
\calclayout

\makeatletter
\newcommand\footnoteref[1]{\protected@xdef\@thefnmark{\ref{#1}}\@footnotemark}
\makeatother

\numberwithin{equation}{section} 
\newtheorem{theorem}{Theorem}[section]
\newtheorem{assumption}{Assumption}
\newtheorem{lemma}[theorem]{Lemma}
\newtheorem{proposition}[theorem]{Proposition}
\newtheorem{remark}[theorem]{Remark} 
\newtheorem{corollary}[theorem]{Corollary}

\date{\today}
\author{Giorgio Cipolloni}
\address{Princeton Center for Theoretical Science, Princeton University, Princeton, NJ 08544, USA}
\author{L\'aszl\'o Erd\H{o}s\(^\#\)}
\address{IST Austria, Am Campus 1, 3400 Klosterneuburg, Austria}
\author{Dominik Schr\"oder\(^{\ast}\)}
\address{Institute for Theoretical Studies, ETH Zurich, Clausiusstr.\ 47, 8092 Zurich, Switzerland}
\email{gc4233@princeton.edu} 
\email{lerdos@ist.ac.at}
\email{dschroeder@ethz.ch}
\thanks{\(^\#\)Supported by ERC Advanced Grant ``RMTBeyond'' No.~101020331}
\thanks{\(^\ast\)Supported by Dr.\ Max R\"ossler, the Walter Haefner Foundation and the ETH Z\"urich Foundation}
\subjclass[2010]{60B20, 15B52} 
\keywords{Local law, quantum unique ergodicity, eigenvector overlap}
\title{Rank-uniform local law for Wigner matrices}
\date{\today}

\begin{document} 

\begin{abstract} 
    We prove a general local law  for Wigner matrices
    which optimally 
    handles observables of arbitrary rank and thus
    it unifies the well-known averaged and
    isotropic local laws. 
    As an application, we prove a central limit theorem in quantum unique ergodicity  (QUE), i.e. we show
    that the quadratic forms of a general  deterministic matrix  $A$
    on the bulk eigenvectors of a Wigner matrix have approximately Gaussian fluctuation.
    For the bulk spectrum, we thus
    generalize  our previous result~\cite{2103.06730}  valid for  test matrices
    $A$ of large rank as well as  the result of Benigni and Lopatto~\cite{2103.12013} valid
    for specific small rank observables.
\end{abstract}

\thispagestyle{empty}

\maketitle

\section{Introduction}

Wigner random matrices are \(N\times N\)  random Hermitian  matrices \(W=W^*\)
with centred, independent, identically distributed \emph{(i.i.d.)} entries up to the symmetry constraint \(w_{ab} = \ov{w_{ba}}\).
Originally introduced by E. Wigner~\cite{MR77805} to study spectral gaps of large atomic nuclei, 
Wigner matrices have become  the most studied random matrix ensemble since they represent the simplest
example of a fully chaotic quantum Hamiltonian beyond the explicitly computable Gaussian case.

A key conceptual feature of Wigner matrices, as well as a fundamental technical tool to study them,
is the fact that their resolvent $G(z):= (W-z)^{-1}$, with a spectral parameter $z$ away from
the real axis, becomes asymptotically deterministic in the large $N$ limit. The limit
is the scalar matrix $m(z)\cdot I$, where $m(z) = \frac{1}{2}(-z+\sqrt{z^2-4})$ is the Stieltjes transform 
of the \emph{Wigner semicircular density}, $\rho_\mathrm{sc}(x) = \frac{1}{2\pi}\sqrt{4-x^2}$,
which is the $N\to \infty$ limit of the empirical density of the eigenvalues of $W$
under the standard normalization $\E |w_{ab}|^2= 1/N$.
The \emph{local law on optimal scale} asserts
that this limit holds even when $z$ is very close to the real axis as long as $|\Im z|\gg 1/N$.
Noticing that the imaginary part of the Stieltjes transform resolves the spectral measure
on a  scale comparable with $|\Im z|$, this condition is necessary for a deterministic limit to hold
since on scales of order $1/N$, comparable with the typical eigenvalue spacing, the resolvent is
genuinely fluctuating.

The limit $G(z)\to m(z)\cdot I$ holds in a natural appropriate topology, namely when tested against
deterministic $N\times N$ matrices $A$, i.e.\ in the form $\braket{G(z) A } \to m(z) \braket{ A}$,
where $\braket{\cdot }:=\frac{1}{N}\Tr (\cdot) $ denotes the normalized trace. 
It is essential that the test matrix $A$ is deterministic, no analogous limit can hold
if  $A$ were random and strongly correlated with $W$, 
e.g.\ if $A$ were a spectral projection of $W$. 

The first optimal local law for Wigner matrices was proven for $A=I$ in~\cite{MR2481753}, see also~\cite{MR2784665,MR2669449,MR3405612,MR4125959}
extended later to more general matrices $A$ in the form that\footnote{ 
Traditional local laws for Wigner matrices did not 
consider  a general test matrix $A$; this concept appeared later in connection with
more general random matrix ensebles, see e.g.~\cite{MR3941370}.}
\begin{equation}\label{aveloc}
    \big| \braket{(G(z)-m(z))A}\big|\le \frac{N^\xi\| A\|}{N\eta}, \qquad \eta:=|\Im z|,
\end{equation}
holds with very high probability  for any fixed $\xi>0$ if $N$ is sufficiently large.  By \emph{optimality} in this paper we always mean up to a tolerance factor $N^\xi$, this is 
a natural byproduct of our method yielding  very high probability  estimates under 
the customary moment condition, see~\eqref{moments} later\footnote{We remark that the $N^\xi$ tolerance
factor can be improved to logarithmic factors under slightly different conditions,
see e.g.~\cite{MR3405612, MR3633466,MR4125959}. }.  The estimate~\eqref{aveloc} is 
called the \emph{average local law} and it controls the error in terms of the  standard Euclidean matrix norm $\| A\|$ of $A$.
It holds for arbitrary deterministic matrices $A$ and it is also  optimal in this generality
with respect to the dependence on $A$:  e.g.\ for $A=I$ the trace
\(\braket{G-m}\) is approximately complex Gaussian with standard deviation~\cite{MR3678478}
\[
\sqrt{\E\abs{\braket{G-m}}^2}\approx \frac{\abs{m'(z)}\abs{\Im m(z)}}{N\eta\abs{m(z)}^2}\sim \frac{1}{N\eta}, 
\qquad \eta=\abs{\Im z}= N^{-\alpha}, \: \alpha\in [0, 1),
\]
but~\eqref{aveloc} is far from being
optimal when applied to matrices with small rank. Rank one matrices, $A= \vy \vx^*$, are especially important since they
give the asymptotic behaviour of resolvent matrix elements  $G_{\vx \vy}:= \langle \vx, G \vy\rangle$.
For such special test matrices, a separate \emph{isotropic local law} of the  optimal form
\begin{equation}\label{isoloc}
    \abs*{\braket{\vx, (G(z)-m(z))\vy }}\le \frac{N^\xi \rho^{1/2} \| \vx\|\|\vy\| }{\sqrt{N\eta}}, \qquad \eta=|\Im z|, \quad \rho:=\abs{\Im m(z)},
\end{equation}
has been proven; see~\cite{MR2981427} for special coordinate vectors and later~\cite{MR3103909} for general 
vectors $\vx, \vy$, as well as~\cite{MR3134604,MR3704770, MR3800833,MR3941370,MR3800833} for more general ensembles. Note that a direct application of~\eqref{aveloc} to $A= \vy \vx^*$ would give a bound of order $1/\eta$
instead of the optimal $1/\sqrt{N\eta}$ in~\eqref{isoloc} which is an unacceptable overestimate 
in the most interesting small $\eta$-regime. More generally, the average local law~\eqref{aveloc}
performs badly when $A$ has effectively small rank, i.e. if only a few eigenvalues of $A$ are comparable with 
the norm $\|A\|$ and most other eigenvalues are  much smaller or even zero.

Quite recently  we found that the average local law~\eqref{aveloc} is also suboptimal for another
class of test matrices $A$, namely for traceless matrices. In~\cite{MR4334253} we proved
that 
\begin{equation}\label{tracelessloc}
    \big| \braket{(G(z)-m(z))A}\big| =  \big| \braket{G(z)A}\big|\le \frac{N^\xi\| A\|}{N\sqrt{\eta}}, \qquad \eta=|\Im z|,
\end{equation}
for any deterministic matrix $A$ with $\braket{A}=0$, i.e. traceless observables yield
an additional $\sqrt{\eta}$ improvement in the error. The optimality of this bound for general traceless $A$ was demonstrated
by identifying the nontrivial  Gaussian fluctuation of $N\sqrt{\eta}\braket{G(z)A}$  in~\cite{2012.13218}.

While the mechanism behind the suboptimality of~\eqref{aveloc} for small rank and traceless $A$ is very different, 
their common core is that estimating the size of $A$ simply by the Euclidean norm is too crude for several
important classes of $A$. In this paper we present a local law which unifies all three local laws~\eqref{aveloc}, \eqref{isoloc}
and~\eqref{tracelessloc} by identifying the appropriate way to measure the size of $A$. Our main result 
(Theorem~\ref{theorem multi G local law}, $k=1$ case) shows that 
\begin{equation}\label{unifiedloc}
    \big| \braket{(G(z)-m(z))A}\big| \le 
    \frac{N^\xi }{N\eta} \abs{\braket{A}}+\frac{N^\xi \rho^{1/2}\braket{ |\mathring{A}|^2}^{1/2} }{N\sqrt{\eta}}, \qquad \eta=|\Im z|,
    \quad \rho =\abs{\Im m(z)},
\end{equation}
holds with very high probability, where $\mathring{A}:= A- \braket{A}$ is the traceless part of $A$. It is straightforward
to check that~\eqref{unifiedloc} implies~\eqref{aveloc}, \eqref{isoloc}
and~\eqref{tracelessloc}, moreover, it optimally interpolates between full rank and rank-one matrices $A$, hence we call~\eqref{unifiedloc} the \emph{rank-uniform local law} for Wigner matrices. Note that an optimal local law for matrices of intermediate rank  was previously unknown; 
indeed the local laws \eqref{aveloc}--\eqref{isoloc} are optimal only for  essentially full   
rank and  essentially finite rank observables, respectively.
The proof of the optimality of~\eqref{unifiedloc} follows from identifying the scale of the Gaussian fluctuation 
of its left hand side. Its standard deviation for traceless $A$ is
\begin{equation}
    \sqrt{\E |\braket{GA}|^2}  \approx \frac{\abs{m}\sqrt{\Im m}\braket{AA^\ast}^{1/2}}{N\sqrt{\eta}} \sim 
      \frac{\rho^{1/2}\braket{AA^*}^{1/2}}{N\sqrt{\eta}};
\end{equation}
this relation was established for matrices with bounded norm $\| A\|\lesssim 1$
in~\cite{MR3155024, 2012.13218}.

The key observation that traceless $A$ substantially improves the error term~\eqref{tracelessloc} 
compared with~\eqref{aveloc} was the conceptually new input behind our recent proof of the \emph{Eigenstate Thermalisation
Hypothesis} in~\cite{MR4334253} followed by the proof of the normal fluctuation
in the quantum unique ergodicity for Wigner matrices in~\cite{MR4413210}. Both results concern
the behavior of the \emph{eigenvector overlaps}, i.e.\ quantities of the form $\braket{\vu_i, A\vu_j}$, 
where $\{\vu_i\}_{i=1}^N$ are the normalized eigenvectors of $W$.
The former result stated that
\begin{equation}\label{eth}
    \big| \langle \vu_i, \mathring{A} \vu_j \rangle\big|=\big| \langle \vu_i, A \vu_j \rangle - \delta_{ij} \braket{A}\big| \le 
    \frac{N^\xi \| \mathring{A}\|}{\sqrt{N}}
\end{equation}
holds
with very high probability for any $i,j$ and for any fixed $\xi>0$.
The latter result established the
optimality of~\eqref{eth} for $i=j$  by showing that $\sqrt{N} \langle \vu_i, \mathring{A} \vu_i \rangle$
is asymptotically Gaussian when the corresponding eigenvalue  lies in the bulk
of the spectrum. The variance of $\sqrt{N} \langle \vu_i, \mathring{A} \vu_i \rangle$
was shown to be $\braket{|\mathring{A}|^2}$  in~\cite{MR4413210} but we needed to assume that 
$\braket{|\mathring{A}|^2}\ge c\| \mathring{A}\|^2$
with some fixed positive constant $c$, i.e.\ that  the rank of $\mathring{A}$ was essentially macroscopic.  

As the second main result of the current paper, we now remove this unnatural condition and show the standard Gaussianity of 
the normalized overlaps
$[N/\braket{|\mathring{A}|^2}]^{1/2} \langle \vu_i, \mathring{A} \vu_i \rangle$ for bulk indices under the  optimal and natural
condition that $\braket{|\mathring{A}|^2}\gg N^{-1} \| \mathring{A}\|^2$, which essentially  ensures
that $\mathring{A}$ is not of finite rank. 
This improvement is possible thanks to improving the dependence of the error terms in the local laws from $\|\mathring{A}\|$
to $\braket{|\mathring{A}|^2}^{1/2}$ similarly to the improvement in~\eqref{unifiedloc} over~\eqref{tracelessloc}.
We will  also need a multi-resolvent version of this improvement 
since off-diagonal overlaps  $\langle \vu_i, A \vu_j \rangle$ 
are not accessible via single-resolvent local laws;
in fact $|\langle \vu_i, A \vu_j \rangle|^2$  is intimately
related to $\braket{ \Im G(z)A \Im G(z') A^*}$ with two different spectral parameters $z, z'$, analysed
in Theorem~\ref{theorem multi G local law}. As a corollary we will show the following improvement of~\eqref{eth}
(see~Theorem~\ref{thm:eth})
\begin{equation}\label{ethnew}
    \big| \langle \vu_i, A \vu_j \rangle - \delta_{ij} \braket{A}\big| \le \frac{N^\xi \braket{|\mathring{A}|^2}^{1/2}}{\sqrt{N}}
\end{equation}
for the bulk indices. The analysis at the edge is deferred to later work.

Gaussian fluctuation of diagonal overlaps with a special low rank observable has been proven earlier.
Right after~\cite{MR4413210} was posted on the arXiv,   Benigni and Lopatto
in an independent work~\cite{MR4397177}
proved the standard Gaussian fluctuation of $[N/|S|]^{1/2}\big[\sum_{a\in S} |u_i(a)|^2 - |S|/N]$
whenever $1\ll |S|\ll N$, i.e. they considered $\langle \vu_i, \mathring{A} \vu_i \rangle$ for the special case when the matrix $A$
is the projection on coordinates from the set $S$. Their result also holds at the edge. The
condition $|S|\ll N$ requires $A$ to have small rank, 
hence it is  complementary to our old  condition $\braket{|\mathring{A}|^2}\ge c\| \mathring{A}\|^2$ from \cite{MR4413210}
for  projection operators.
The natural condition $|S|\gg 1$ is the special case of our new improved condition $\braket{|\mathring{A}|^2}\gg N^{-1} \| \mathring{A}\|^2$. In particular,  our new result covers~\cite{MR4397177}
as a special case  in the bulk and it gives a uniform treatment of all observables in full generality.

The methods of~\cite{MR4413210} and~\cite{MR4397177} are very different albeit they both 
rely on the \emph{Dyson Brownian motion (DBM)}, complemented by fairly standard \emph{Green function comparison (GFT)}
techniques. Benigni and Lopatto focused
on the joint Gaussianity of the individual eigenvector entries $u_i(a)$ (or more generally, linear functionals
$\langle q_\alpha, \vu_i\rangle$ with deterministic unit vectors $q_\alpha$) in the spirit of 
the previous quantum ergodicity results by Bourgade and Yau~\cite{MR3606475} 
operating with the so-called \emph{eigenvector moment flow} from~\cite{MR3606475} complemented by 
its "fermionic" version by Benigni~\cite{MR4242625}.
This approach becomes less effective when more 
entries  need to be controlled simultaneously and it seems to have a natural limitation at $|S|\ll N$. 

Our method viewed the eigenvector overlap $\langle \vu_i, \mathring{A} \vu_i \rangle$ and its offdiagonal version
$\langle \vu_i, \mathring{A} \vu_j \rangle$ as one unit without translating it into a sum of rank one projections
$\langle \vu_i, q_\alpha\rangle\langle q_\alpha, \vu_j\rangle$ via the spectral decomposition of $\mathring{A}$.
The corresponding flow for overlaps with arbitrary $A$,
called the \emph{stochastic eigenstate equation}, 
was introduced by Bourgade, Yau and Yin in~\cite{MR4156609} (even though they applied it
to the special case when $A$ is a projection,  their formalism is general). The analysis of this new flow 
is more involved than the eigenvector moment flow since it operates on a geometrically more
complicated higher dimensional space. However, the substantial part of this analysis has been done by Marcinek and Yau~\cite{MR4272266}
and we heavily relied on their work in our proof~\cite{MR4413210}.

We close this introduction by
commenting on our methods. 
The main novelty of the current paper is the proof of the rank-uniform local laws involving the Hilbert-Schmidt norm
$\braket{|\mathring{A}|^2}^{1/2}$
instead of the Euclidean matrix norm $\|\mathring{A}\|$.  This is done in Section~\ref{sec loclaw proof}
and it will directly imply the improved overlap estimate~\eqref{ethnew}. Once this estimate is available,
both the DBM  and the GFT parts of the proof in the current paper are essentially the same as in~\cite{MR4413210}, hence we will not
give all details, we only point out the differences. While this can be done very concisely for the GFT in Appendix~\ref{sec:GFT},
for the DBM part  we need to recall large part of the necessary setup in Section~\ref{sec:see} for the convenience of the reader. 

As to our main result, the  general scheme to prove single resolvent local laws
has been well established and traditionally it consisted of two parts: (i) the derivation
of an approximate self-consistent equation that $G-m$ satisfies and (ii) estimating the key
fluctuation term in this equation. The proofs of the multi-resolvent local laws
follow the same scheme, but the self-consistent equation is considerably more
complicated and its stability is more delicate, see e.g.~\cite{MR4334253, MR4372147}
where general multi-resolvent local laws were proven. The main complication lies in part (ii)
where a high moment estimate is  needed for the fluctuation term. The corresponding
cumulant expansion results in many terms which have typically  been organized and estimated
by a graphical Feynman diagrammatic scheme. A reasonably manageable power counting handles all diagrams 
for the purpose of proving~\eqref{aveloc} and \eqref{isoloc}. However, in the multi-resolvent setup
or if we aim at some improvement,
the diagrammatic approach becomes very involved since the right number of additional improvement factors
needs to be gained from every single graph. This was the case many times before:
(i) when a small factor (so-called \emph{``sigma-cell''}) was extracted  at the cusp~\cite{MR4134946}, (ii)
when we proved that the correlation between the resolvents of the Hermitization of an i.i.d.\ random matrix shifted by two
different spectral parameters $z_1, z_2$ decays in $1/|z_1-z_2|$~\cite{1912.04100},  
and (iii) more recently 
when the  gain of order $\sqrt{\eta}$  due to the  traceless $A$ in~\eqref{tracelessloc} was obtained in~\cite{MR4334253}.

Extracting $\braket{|\mathring{A}|^2}^{1/2}$ instead of $\|A\|$, especially in the multi-resolvent case,  seems
even more involved in this way since estimating $A$ simply by its norm appears everywhere in  any diagrammatic expansion.
However, very recently in~\cite{MR4479913}
we introduced a new method of \emph{a system of master inequalities}
that circumvents the full diagrammatic expansion. The power of this method
was demonstrated   by fully extracting
the maximal $\sqrt{\eta}$-gain from traceless $A$ even in the multi-resolvent setup;
the same result seemed out of reach with the diagrammatic method used for
the single-resolvent setup in~\cite{MR4334253}.
In the current paper we extend this  technique to obtain the optimal control in terms of
$\braket{|\mathring{A}|^2}^{1/2}$ instead of $\|\mathring{A}\|$ for  single resolvent local laws. However, the master inequalities in this paper
are different from the ones in~\cite{MR4479913}; in fact they are much tighter,
since the effect we extract now is much more delicate. We also obtain  a similar optimal control for the multi-resolvent local laws
needed to prove the Gaussianity of the bulk eigenvector overlaps under
the optimal condition on $A$. 

\subsection*{Notations and conventions} We denote vectors by bold-faced lower case Roman letters \({\bm x}, {\bm y}\in\C ^N\), for some \(N\in\N\). Vector and matrix norms, \(\norm{\vx}\) and \(\norm{A}\), indicate the usual Euclidean norm and the corresponding induced matrix norm. For any \(N\times N\) matrix \(A\) we use the notation \(\braket{ A}:= N^{-1}\Tr  A\) to denote the normalized trace of \(A\). Moreover, for vectors \({\bm x}, {\bm y}\in\C^N\) and matrices  \(A\in\C^{N\times N}\) we define
\[ \braket{ {\bm x},{\bm y}}:= \sum_{i=1}^N \overline{x}_i y_i, \qquad A_{\vx\vy}:=\braket{\vx,A\vy}.\]

We will use the concept of ``with very high probability'' meaning that for any fixed \(D>0\) the probability of an \(N\)-dependent event is bigger than \(1-N^{-D}\) if \(N\ge N_0(D)\). We introduce the notion of \emph{stochastic domination} (see e.g.~\cite{MR3068390}): given two families of non-negative random variables
\[
X=\tuple*{ X^{(N)}(u) \given N\in\N, u\in U^{(N)} }\quad\text{and}\quad Y=\tuple*{ Y^{(N)}(u) \given N\in\N, u\in U^{(N)} }
\] 
indexed by \(N\) (and possibly some parameter \(u\)  in some parameter space $U^{(N)}$), 
we say that \(X\) is stochastically dominated by \(Y\), if for all \(\xi, D>0\) we have \begin{equation}\label{stochdom}
\sup_{u\in U^{(N)}} \Prob\left[X^{(N)}(u)>N^\xi  Y^{(N)}(u)\right]\leq N^{-D}
\end{equation}
for large enough \(N\geq N_0(\xi,D)\). In this case we use the notation \(X\prec Y\) or \(X= \landauOprec*{Y}\).
We also use the convention that \(\xi>0\) denotes an arbitrary small constant which is independent of \(N\).

Finally, for positive quantities \(f,g\) we write \(f\lesssim g\) and \(f\sim g\) if \(f \le C g\) or \(c g\le f\le Cg\), respectively, for some constants \(c,C>0\) which depend only on the constants appearing in the moment condition, see~\eqref{moments} later.

\section{Main results}

\begin{assumption}\label{Wigner def}
    We say that \(W=W^\ast\in\C^{N\times N}\) is a real symmetric/complex hermitian 
    Wigner-matrix if the entries \((w_{ab})_{a\le b}\) in the upper triangular part are independent and satisfy 
    \begin{equation}
        w_{ab}\stackrel{\mathrm{d}}{=} N^{-1/2}\times\begin{cases}
            \chi_\mathrm{od}, & a\ne b\\
            \chi_\mathrm{d}, & a=b,
        \end{cases}
    \end{equation}
    for some real random variable \(\chi_\mathrm{d}\) and
    some real/complex random variable \(\chi_\mathrm{od}\) 
    of mean \(\E \chi_\mathrm{d}=\E \chi_\mathrm{od}=0\) and variances \(\E \abs{\chi_\mathrm{od}}^2=1\), \(\E\chi_\mathrm{od}^2=0\), \(\E \chi_\mathrm{d}^2=1\) in the complex, and \(\E \abs{\chi_\mathrm{od}}^2=\E \chi_\mathrm{od}^2=1\), \(\E \chi_\mathrm{d}^2=2\) in the real case\footnote{We assumed that $\sigma:=\E\chi_{\mathrm{od}}^2=0$, $\E \chi_{\mathrm{d}}^2=1$ in the complex case, and that $\E\chi_\mathrm{d}^2=2$ in the real case only for notational simplicity. All the results presented below hold under the more general assumption $|\sigma|<1$ and general variance for diagonal entries. The necessary modifications in the proofs are straightforward
    and will be omitted.}. We furthermore assume that for every \( n\ge 3\) 
    \begin{equation}\label{moments}
        \E \abs{\chi_\mathrm{d}}^n + \E \abs{\chi_\mathrm{od}}^n \le C_n
    \end{equation}
    for some constant $C_n$, in particular all higher order cumulants \(\kappa_n^\mathrm{d},\kappa_n^\mathrm{od}\) of \(\chi_\mathrm{d}, \chi_\mathrm{od} \) are finite for any \(n\).
\end{assumption}
Our results hold for both symmetry classes, but for definiteness we prove the main results in the real case, the changes for the complex case being minimal.

For a spectral parameter \(z\in\C\) with \(\eta:=\abs{\Im z}\gg N^{-1}\) the resolvent \(G=G(z)=(W-z)^{-1}\) of a \(N\times N\) 
Wigner matrix \(W\) is well approximated by a constant multiple \(m\cdot I\) of the identity matrix, where \(m=m(z)\) 
is the Stieltjes transform of the semicircular distribution \(\sqrt{4-x^2}/(2\pi)\)  and satisfies the equation 
\begin{equation}\label{MDE}
    -\frac{1}{m}=m+z,\qquad \Im m\Im z>0. 
\end{equation}
We set $\rho(z): = |\Im m(z)|$, which approximates the density of eigenvalues near $\Re z$ in a window of size $\eta$. 

We first recall the classical local law for Wigner matrices, both in its tracial and isotropic 
form~\cite{MR2481753,MR2871147,MR3103909,MR3800833}: 
\begin{theorem}\label{theorem G av local law}
    Fix any $\epsilon>0$, then 
     it holds that 
    \begin{equation}\label{oneG}
        \abs{\braket{G-m}}\prec \frac{1}{N\eta}, 
        \qquad \abs{\braket{\vx,(G-m)\vy}}\prec \norm{\vx}\norm{\vy}\Bigl(\sqrt{\frac{\rho}{N\eta}}+\frac{1}{N\eta}\Bigr)
    \end{equation}
 uniformly in any deterministic    vectors  $\vx, \vy$ and spectral parameter $z$ with $\eta=\abs{\Im z}\ge N^{-1+\epsilon}$ and \(\Re z\in\R\),
 where $\rho=\abs{\Im m(z)}$. 
\end{theorem}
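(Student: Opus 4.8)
The plan is to prove both estimates together by the classical self-consistent (Dyson) equation method, following the line of \cite{MR2481753,MR2871147,MR3103909,MR3800833}. I would track the control quantities $\Lambda_\av:=\abs{\braket{G-m}}$ and $\Lambda_\iso:=\sup_{\vx,\vy}\abs{\braket{\vx,(G-m)\vy}}/(\norm{\vx}\norm{\vy})$, together with the entrywise versions $\Lambda_{\mathrm d}:=\max_a\abs{G_{aa}-m}$ and $\Lambda_{\mathrm o}:=\max_{a\neq b}\abs{G_{ab}}$, and run a continuity argument in $\eta$: at $\eta\sim1$ all of these are trivially small, and one propagates smallness down to $\eta\ge N^{-1+\epsilon}$ using that the error bounds below are self-improving, so that an a priori bound $\Lambda\prec(N\eta)^{-c}$ for a small $c$ is upgraded in finitely many rounds to the optimal one.

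\emph{Self-consistent equation.} By the Schur complement formula $G_{aa}=\bigl(w_{aa}-z-\sum_{b,c\neq a}w_{ab}G^{(a)}_{bc}w_{ca}\bigr)^{-1}$, with $G^{(a)}$ the resolvent of the minor obtained by deleting row and column $a$. The quadratic form concentrates around its conditional expectation, $\sum_{b,c\neq a}w_{ab}G^{(a)}_{bc}w_{ca}=\braket{G^{(a)}}+\landauOprec{\Psi}$ with $\Psi:=\sqrt{(\rho+\Lambda_\av)/(N\eta)}+1/(N\eta)$ — this is where the moment assumption \eqref{moments} enters, via large-deviation bounds for quadratic forms, together with the Ward identity $\sum_b\abs{G_{ab}}^2=\Im G_{aa}/\eta$ and the minor-resolvent identity $\braket{G^{(a)}}=\braket{G}+\landauO{1/N}$. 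Hence $-1/G_{aa}=z+\braket{G}+\landauOprec{\Psi+\Lambda_\iso^2}$ for every $a$, and averaging over $a$ gives the scalar Dyson equation $-1/\braket{G}=z+\braket{G}+\landauOprec{\Psi}$. A parallel, slightly more careful expansion in which the test vectors are kept throughout yields $G_{\vx\vy}=m\braket{\vx,\vy}+\landauOprec{\norm{\vx}\norm{\vy}\,\Psi}$. Equivalently, one can avoid minors entirely and obtain the same equations by a cumulant expansion of $\E\abs{\braket{G-m}}^{2p}$ and $\E\abs{G_{\vx\vy}-m\braket{\vx,\vy}}^{2p}$, starting from $G-m=m\,\underline{WG}+m\braket{G-m}\,G+(\text{lower order})$ and estimating the fluctuation term $m\braket{\underline{WG}}$.

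\emph{Stability and the averaging gain.} The scalar map $u\mapsto-1/u-u-z$ is stable, with the usual $\sqrt{\kappa+\eta}$-weighted norm near the spectral edge (here $\kappa$ is the distance of $\Re z$ to $\{-2,2\}$) and an order-one constant in the bulk; inserting the self-consistent error into the stability estimate and resolving the implicit $\Lambda_\av$ inside $\Psi$ by a Young inequality gives $\Lambda_\av+\Lambda_\iso\prec\sqrt{\rho/(N\eta)}+1/(N\eta)$, which is already the claimed isotropic bound. To reach the sharper averaged bound $1/(N\eta)$ one invokes fluctuation averaging: the fluctuations $Z_a:=\sum_{b,c\neq a}w_{ab}G^{(a)}_{bc}w_{ca}-\braket{G^{(a)}}$ are only weakly correlated, so $\frac1N\sum_a Z_a$ is of size $\Psi^2 N\eta\sim(N\eta)^{-1}$ rather than $\Psi$; I would establish this by a high-moment estimate for $\frac1N\sum_a Z_a$ via the cumulant/graphical expansion. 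Feeding this improved self-consistent error back through the stability of the scalar equation yields $\Lambda_\av\prec1/(N\eta)$.

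The main obstacle is the combination of the fluctuation-averaging estimate with uniformity all the way to the edge: near $\Re z=\pm2$ the stability of \eqref{MDE} degrades, $\rho$ and $\Psi$ both shrink, and the bootstrap in $\eta$ must be arranged so that the a priori bound is improved to the optimal $\Psi$ in a controlled number of steps while the self-consistent error stays genuinely of size $\Psi$, not $\Psi$ times a power of $\Lambda$. The combinatorial heart — the cumulant/diagrammatic bookkeeping behind the quadratic-form large-deviation bound and, above all, behind the extra $(N\eta)^{-1/2}$ gain in the tracial law — is the part that requires the most work, though for Wigner matrices in this unweighted ($\norm{A}$-type) generality it is classical.
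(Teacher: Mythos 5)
The paper does not prove~\cref{theorem G av local law}: it is recalled as a classical fact with citations to the original references, so there is no in-paper argument to compare against. Your sketch is a reasonable high-level outline of the proof in those references — the self-consistent Schur-complement equation with large-deviation bounds for the quadratic form $\sum_{b,c}w_{ab}G^{(a)}_{bc}w_{ca}$, the Ward identity, the stability of the scalar Dyson equation with $\sqrt{\kappa+\eta}$-weighting near the edge, the bootstrap in $\eta$, and fluctuation averaging for the extra $(N\eta)^{-1/2}$ gain in the tracial law; the cumulant-expansion alternative is also standard and is in fact closer in spirit to how the paper proves its new multi-resolvent estimates. Two small remarks. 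First, the fluctuation-averaging output should read $\frac1N\sum_a Z_a\prec\Psi^2$, not $\Psi^2 N\eta$; in the bulk $\Psi^2\sim(N\eta)^{-1}$ already, and the stray factor $N\eta$ would destroy the gain. Second, the fully isotropic bound for arbitrary deterministic $\vx,\vy$ (rather than coordinate vectors, which give only the entrywise law) is a genuine additional step, established in the cited Knowles--Yin work, and your phrase about ``a parallel, slightly more careful expansion in which the test vectors are kept throughout'' compresses that step considerably — acceptable for a sketch, but the hardest technical point of the isotropic generalization, namely propagating the a priori bound for non-coordinate $\vx,\vy$ through the minor expansion, deserves an explicit mention.
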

Our main result is the following optimal multi-resolvent local law with Hilbert-Schmidt norm error terms.
 Compared to~\cref{theorem G av local law} we formulate the bound only in averaged sense since, due to the Hilbert-Schmidt norm in the error term, the isotropic bound is a special case with one of the traceless matrices being a centred rank-one matrix, see~\cref{corol}.
\begin{theorem}[ Averaged multi-resolvent local law]\label{theorem multi G local law}
    Fix \(\epsilon>0\), let \(k\ge 1\) and consider \(z_1,\ldots,z_{k}\in\C\) with \(N\eta\rho\ge N^\epsilon\), for \(\eta:=\min_i\abs{\Im z_i},\rho:=\max_i\abs{\Im m(z_i)},d:=\min_i\mathrm{dist}(z_i,[-2,2])\), and let \(A_1,\ldots,A_k\) be deterministic traceless matrices, $\braket{A_i}=0$. Set $G_i:= G(z_i)$ and $m_i:= m(z_i)$ for all $i\le k$. Then we have the local law on optimal scale\footnote{The constant \(10\) is arbitrary and can be replaced by any positive constant.}
    \begin{equation}\label{loc1}
        \abs{\braket{G_1A_1\cdots G_k A_k-m_1\cdots m_k A_1\cdots A_k}} \prec N^{k/2-1} \prod_{i=1}^k \braket{\abs{A_i}^2}^{1/2} \times\begin{cases}
            \sqrt{\frac{\rho}{N\eta}}, & d<10\\ 
            \frac{1}{\sqrt{N}d^{k+1}}, & d\ge 10.
        \end{cases}
    \end{equation}
\end{theorem}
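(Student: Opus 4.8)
The plan is to prove \cref{theorem multi G local law} by induction on \(k\), using the method of a system of \emph{master inequalities} as in~\cite{2112.13693}, but now tracking the Hilbert--Schmidt size \(\braket{\abs{A_i}^2}^{1/2}\) of each traceless observable rather than the operator norm \(\norm{A_i}\). The base case \(k=1\) is the rank-uniform single-resolvent law~\eqref{unifiedloc}: here one derives the self-consistent equation for \(\braket{GA}\) with traceless \(A\), and the crucial fluctuation term must be estimated in high moments by a cumulant expansion, but organized so that every resolvent chain that appears is itself controlled by the (inductive) hypothesis with a strictly smaller effective complexity. The key algebraic identity that keeps the Hilbert--Schmidt norm in play is that for traceless \(A\), after cumulant expansion, the leading terms reorganize into objects of the form \(\braket{G \mathring B G \mathring C}\) — a two-resolvent chain — whose size by \cref{theorem G av local law} scaled appropriately is already at the target order, and whose iteration does not lose the \(\braket{\abs{A_i}^2}^{1/2}\) factors. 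Concretely, I would introduce control quantities
\begin{equation}\label{eq:control}
  \Psi_k^{\av} := \max \frac{\abs{\braket{G_1 A_1 \cdots G_k A_k - m_1\cdots m_k A_1\cdots A_k}}}{N^{k/2-1}\prod_i \braket{\abs{A_i}^2}^{1/2}\cdot \beta_k}, \qquad \Psi_k^{\iso} := \max \frac{\abs{\braket{\vx, G_1 A_1\cdots G_k \vy} - \ldots}}{\ldots},
\end{equation}
where \(\beta_k\) denotes the right-hand profile in~\eqref{loc1}, the maxima running over admissible spectral parameters, traceless matrices and unit vectors, and prove a closed system of inequalities of the schematic form \(\Psi_k \lesssim 1 + (\text{small prefactor})\,\Psi_{\le k} + (\text{lower-order}) \, \Psi_{\le k-1}^{1/2}\Psi_{\le k}^{1/2} + \dots\). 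Since the small prefactor carries a genuine negative power of \(N\eta\rho\ge N^\epsilon\), the system can be bootstrapped: assuming \(\Psi_{\le k-1} \prec 1\), one iterates the inequality for \(\Psi_k\) finitely many times to reach \(\Psi_k \prec 1\).

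The main technical work, and the first thing I would carry out, is the derivation of the approximate self-consistent equation for a general chain \(\braket{G_1 A_1 \cdots G_k A_k}\). This uses the identity \(G_i = m_i + m_i \un W G_i\) (with \(\un W\) the centred matrix, or rather the analogue after subtracting the deterministic part) followed by integration by parts / cumulant expansion in the Gaussian-like directions of \(W\); the second-order (Gaussian) part produces the deterministic approximation \(m_1\cdots m_k A_1\cdots A_k\) plus a ``self-energy'' renormalization that must be inverted using the stability operator \(1 - m_i m_{i+1}\langle\cdot\rangle\)-type operators, whose invertibility on the traceless subspace is exactly where the \(\sqrt{\eta}\)-type and Hilbert--Schmidt gains come from. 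The higher-order cumulants give error terms that are power-counted against \(\Psi_{\le k}\). The point I would emphasize is that, because all \(A_i\) are traceless, the ``dangerous'' resonant term — the one that would otherwise force the operator norm to appear — is projected out, and the remaining contractions either (a) split the chain into two shorter chains, each estimated by the inductive \(\Psi\)'s, producing the factor \(N^{k/2-1}\) by a simple counting of how many \(\braket{\abs{A_i}^2}^{1/2}\) factors are created, or (b) produce a genuinely smaller term by the cumulant decay. One also needs the reduction inequality relating isotropic and averaged chains (replacing a rank-one \(\vy\vx^*\) by a traceless rank-one matrix plus its trace, controlling the trace part separately) — this is the content of \cref{corol} referenced in the excerpt — so the isotropic and averaged \(\Psi\)'s genuinely feed each other in the system.

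The hard part will be getting the \emph{constants in the exponents right} in the master inequalities: one must check that every term generated by the cumulant expansion, after being bounded by a product of lower-complexity \(\Psi\)'s, carries at least the target power \(N^{k/2-1}\prod_i\braket{\abs{A_i}^2}^{1/2}\beta_k\) and not a worse one — in particular that splitting a length-\(k\) chain never produces an ``orphaned'' resolvent whose deterministic approximation \(m_i\) is size \(O(1)\) but whose accompanying matrix is estimated by norm rather than Hilbert--Schmidt norm. This is precisely where the master inequalities here must be \emph{tighter} than those in~\cite{2112.13693}: the \(\sqrt{\eta}\)-gain there tolerated a lossy step that we can no longer afford, because the Hilbert--Schmidt improvement is a ``softer'' effect with no room to spare. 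I expect the resolution to require carefully pairing each freed resolvent with a freed traceless matrix (using Cauchy--Schwarz in the form \(\abs{\braket{G \mathring A G' \mathring B}} \le \braket{G\mathring A \Im G \mathring A^* G^*}^{1/2}\braket{\dots}^{1/2}\) to symmetrize, then applying the Ward identity \(GG^* = \Im G/\eta\) to convert resolvent products into densities), so that the bookkeeping of \(\braket{\abs{A_i}^2}^{1/2}\) factors closes exactly. Once the system~\eqref{eq:control} is established and bootstrapped, the edge regime \(d\ge 10\) follows from the same argument with the trivial deterministic bound \(\norm{G_i}\lesssim d^{-1}\) replacing the Ward identity, giving the stated \(d^{-(k+1)}\) decay, and the two-parameter \(\Im G \Im G'\) version needed for the eigenvector overlaps is obtained by polarization from~\eqref{loc1}.
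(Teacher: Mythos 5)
Your outline captures the high-level architecture of the paper's proof — self-consistent equation from the underline renormalization, cumulant expansion of high moments, Ward-identity/Cauchy--Schwarz improvements to convert resolvent products into densities, a bootstrap of master inequalities tracking Hilbert--Schmidt norms, the trivial norm bound $\norm{G}\lesssim d^{-1}$ for $d\ge 10$, and polarization to recover the $\Im G\Im G'$ version — but it has two structural gaps that matter.

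First, the proposed system of master inequalities is \emph{not} closed at level $k$ in the form you describe. When you symmetrize $((GA)^kG)_{ab}$ by Cauchy--Schwarz and apply the Ward identity, the resulting quantity $[(G^*A)^kG^*G(AG)^k]_{bb}$ is a chain of length $2k$, so the master inequalities (\cref{Psi master av}--\cref{Psi master iso} in the paper) unavoidably contain $\psi^{\av/\iso}_{2k}$ and $\Phi_k$ (which itself carries $\psi^\iso_{2k_i}$) on the right-hand side — with small but not arbitrarily small prefactors like $(N\eta\rho)^{-1/4}$. A schematic $\Psi_k \lesssim 1 + (\text{small})\Psi_{\le k} + \dots$ would close immediately by iteration, but that is not the situation. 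The paper has to prove a separate \emph{reduction lemma} (\cref{pro:redin}) that spectrally decomposes the doubled chain and bounds $\Psi_{2k}$ in terms of $\psi_k$ again (paying a factor $\sqrt{N\eta/\rho}$), and only the tandem of master inequalities plus reductions closes. Without the reduction step, your induction on $k$ never gets off the ground because the inequality for $\Psi_k$ references $\Psi_{2k}$.

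Second, and related, the bootstrap is more delicate than ``assume $\Psi_{\le k-1}\prec 1$, iterate $\Psi_k\prec 1$.'' Because of the $\psi_{2k}$-feedback, the paper must run a step-two induction that first establishes the \emph{weaker} a priori bound $\Psi_n^{\av/\iso}\prec\rho^{-n/4}$ for all $n$ (the genuinely hard stage, where the reduction lemma pays off $\rho$-powers instead of constants), and only afterward, splitting into the two regimes $\rho\gtrless N^{-\xi/K}$, upgrades to $\Psi_k\prec 1$. Your description of the Hilbert--Schmidt gain as a ``dangerous resonant term being projected out'' is also not quite how the estimate closes; the key algebraic identity is the summation $\sum_b\norm{A\bm e_b}^2 = N\braket{\abs{A}^2}$ used at exactly the right index summation in each of the several cases in the cumulant expansion (\cref{Xi lemma}), together with separate treatments for the cases $\abs{\vl}+\sum J-\abs{J}\in\{1,2,3\}$ — there is no single projection argument that covers them uniformly.
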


\begin{remark}
    We also obtain generalisations of~\cref{theorem multi G local law} where each \(G\) may be replaced by a product of \(G\)'s and \(\abs{G}'s\), see~\cref{lemma Psi G prod} later.
\end{remark}

Due to the Hilbert-Schmidt sense of the error term we obtain an isotropic variant of~\cref{theorem multi G local law} as an immediate corollary by choosing $A_k= N \vy\vx^* -\braket{\vx, \vy}$ in~\eqref{loc1}.  
\begin{corollary}[Isotropic local law]\label{corol} Under the setup and conditions of Theorem~\ref{theorem multi G local law}, 
    for any vectors \(\vx,\vy\) it holds that 
    \begin{equation}
        \abs{\braket{\vx,(G_1A_1\cdots A_{k-1} G_k-m_1\cdots m_k A_1\cdots A_{k-1})\vy}}\prec \norm{\vx}\norm{\vy}
         N^\frac{k-1}{2}\prod_{i=1}^{k-1} \braket{\abs{A_i}^2}^{1/2} \times\begin{cases}
            \sqrt{\frac{\rho}{N\eta}}, & d<10\\ 
            \frac{1}{\sqrt{N}d^{k+1}}, & d\ge 10.
        \end{cases}
    \end{equation}
\end{corollary}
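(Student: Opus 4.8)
The plan is to derive \cref{corol} directly from \cref{theorem multi G local law} by the standard rank-one embedding trick, which works precisely because the error term in \eqref{loc1} is measured in the Hilbert--Schmidt norm $\braket{|A_i|^2}^{1/2}$ rather than the operator norm. First I would set $A_k := N\vy\vx^\ast - \braket{\vx,\vy}I$, so that $\braket{A_k}=0$ and hence $A_k$ is an admissible traceless observable in \cref{theorem multi G local law}. One computes that $\braket{|A_k|^2}^{1/2}=N^{-1}\Tr\big( (N\vy\vx^\ast-\braket{\vx,\vy})(N\vx\vy^\ast-\ov{\braket{\vx,\vy}})\big)^{1/2}$; expanding, the leading term is $N\norm{\vx}^2\norm{\vy}^2$ minus $|\braket{\vx,\vy}|^2 \le \norm\vx^2\norm\vy^2$, so $\braket{|A_k|^2}^{1/2}\le \norm\vx\norm\vy\sqrt{N}$ (and $\sim \norm\vx\norm\vy\sqrt N$ unless $\vx,\vy$ are nearly parallel, but only the upper bound is needed). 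Plugging this into \eqref{loc1} turns the prefactor $N^{k/2-1}\prod_{i=1}^k\braket{|A_i|^2}^{1/2}$ into $N^{k/2-1}\cdot\sqrt{N}\norm\vx\norm\vy\prod_{i=1}^{k-1}\braket{|A_i|^2}^{1/2}=N^{(k-1)/2}\norm\vx\norm\vy\prod_{i=1}^{k-1}\braket{|A_i|^2}^{1/2}$, which is exactly the prefactor claimed in \cref{corol}; the two cases $d<10$ and $d\ge 10$ are carried along unchanged.

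Next I would unfold the trace on the left-hand side of \eqref{loc1} with this choice of $A_k$. Using cyclicity of the trace and $\braket{B\,\vy\vx^\ast}=N^{-1}\braket{\vx,B\vy}$ for any matrix $B$, the term $\braket{G_1A_1\cdots G_{k-1}A_{k-1}G_k A_k}$ splits as
\begin{equation*}
    \braket{G_1A_1\cdots A_{k-1}G_k\,(N\vy\vx^\ast)} - \braket{\vx,\vy}\braket{G_1A_1\cdots A_{k-1}G_k} = \braket{\vx,(G_1A_1\cdots A_{k-1}G_k)\vy} - \braket{\vx,\vy}\braket{G_1A_1\cdots A_{k-1}G_k},
\end{equation*}
and similarly the deterministic counterpart $\braket{m_1\cdots m_k A_1\cdots A_{k-1}A_k}$ becomes $m_1\cdots m_k\big(\braket{\vx,(A_1\cdots A_{k-1})\vy} - \braket{\vx,\vy}\braket{A_1\cdots A_{k-1}}\big)$. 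The unwanted scalar pieces $-\braket{\vx,\vy}\braket{G_1A_1\cdots A_{k-1}G_k}$ and its deterministic analogue must now be matched; I would absorb them by applying \cref{theorem multi G local law} once more, this time to the order-$(k-1)$ chain $\braket{G_1A_1\cdots G_{k-1}A_{k-1}G_k\cdot I}$ — but $I$ is not traceless, so instead I observe that $\braket{G_1A_1\cdots A_{k-1}G_k}$ is itself an averaged quantity of the type already controlled: writing $G_k = G_k$ directly, the chain $G_1A_1\cdots A_{k-1}G_k$ has $k-1$ traceless matrices, and one applies the $k\mapsto k-1$ case of \cref{theorem multi G local law} (with the last resolvent factor $G_k$ playing no special role after cyclic rotation, since $\braket{G_1A_1\cdots A_{k-1}G_k}=\braket{G_kG_1A_1\cdots A_{k-1}}$ and $G_kG_1$ may be handled by the product-of-resolvents generalisation mentioned in the Remark, or more simply by the resolvent identity when $z_k\ne z_1$, and by noting $\braket{G_k G_1 A_1 \cdots A_{k-1}}$ is lower order). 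The scalar correction is thus of strictly smaller order and is reabsorbed into the claimed error bound.

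The main obstacle is purely bookkeeping rather than conceptual: one must check that every scalar correction term generated by the $-\braket{\vx,\vy}$ part of $A_k$ (and there is a small tree of them, since $\braket{\vx,\vy}$ times a shorter resolvent chain again generates, via the deterministic approximation, further traceless-chain remainders) is indeed subleading compared with $N^{(k-1)/2}\norm\vx\norm\vy\prod_{i=1}^{k-1}\braket{|A_i|^2}^{1/2}$ times the stated $\eta$- or $d$-dependent factor. This follows because each such correction carries one fewer factor of $\sqrt N$ in the prefactor (having lost the $\braket{|A_k|^2}^{1/2}\sim\sqrt N\norm\vx\norm\vy$ enhancement and replaced it by the bounded $|\braket{\vx,\vy}|\le\norm\vx\norm\vy$), while the resolvent chain it multiplies is controlled, by the already-established $k-1$ (or smaller) case of \cref{theorem multi G local law} together with \cref{theorem G av local law} for the trailing single or double resolvent, at an order no worse than the target; I would present this as a short induction on $k$, the base case $k=1$ being the isotropic local law in \eqref{oneG}. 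No new input beyond \cref{theorem multi G local law}, \cref{theorem G av local law}, and elementary trace manipulations is required, so the proof is genuinely a corollary.
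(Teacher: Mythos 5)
Your proof is correct and is exactly the substitution $A_k = N\vy\vx^* - \braket{\vx,\vy}I$ that the paper points to, with the key step $\braket{|A_k|^2}^{1/2}\le\sqrt N\norm{\vx}\norm{\vy}$ converting the averaged prefactor $N^{k/2-1}\prod_i\braket{|A_i|^2}^{1/2}$ into the claimed isotropic one. Your care in accounting for the scalar correction $\braket{\vx,\vy}\big[\braket{G_1A_1\cdots A_{k-1}G_k}-m_1\cdots m_k\braket{A_1\cdots A_{k-1}}\big]$ --- which the paper simply labels ``immediate'' --- is appropriate; after cycling to $\braket{G_kG_1A_1\cdots A_{k-1}}$ this is controlled by \cref{lemma Psi G prod}(ii) with the deterministic mismatch $|m[z_k,z_1]m_2\cdots m_{k-1}-m_1\cdots m_k|\,|\braket{A_1\cdots A_{k-1}}|$ and the fluctuation both subleading (by factors $\sqrt{\rho/N\eta}$ and $1/(N\eta)$ respectively), using $N\eta\rho\ge N^\epsilon$.
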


We now compare~\cref{theorem multi G local law} to the previous result~\cite[Theorem~2.5]{MR4479913} where an error term  $N^{-1}\eta^{-k/2}\prod_i \norm{A_i}$ was proven for~\eqref{loc1}. For clarity we focus on the really interesting \(d<10\) regime.
\begin{remark}\label{rmk:compare}
    For \(k=1\) our new estimate for traceless $A$:
    \begin{equation}
   \label{eq:imphs1}
     \big|\braket{(G-m)A}\big|=\big|\braket{GA}\big|\prec \frac{\sqrt{\rho}}{N\sqrt{\eta}}\braket{|A|^2}^{1/2}, 
    \end{equation}
        is strictly better than the one in~\cite[Theorem~2.5]{MR4479913}, 
    since \(\braket{\abs{A}^2}\le \norm{A}^2\) always holds, but
    \(\braket{\abs{A}^2}\) can be much smaller than \(\norm{A}^2\) for small rank \(A\). In addition,~\eqref{eq:imphs1} features
    an additional factor $\sqrt{\rho}\lesssim 1$ that is considerably smaller than 1 near the spectral edges.
          
    For larger \(k\ge 2\) the relationship depends on the relative size of the Hilbert-Schmidt and operator norm of the \(A_i\)'s, 
    as well as on the size of $\eta$. We recall~\cite{MR2351844} that the \emph{numerical rank} of \(A\) is defined as \(r(A):=N\braket{\abs{A}^2}/\norm{A}^2\le\rank(A)\) and say that \(A\) is \(\alpha\)-mesoscopic for some \(\alpha\in[0,1]\) if \(r(A)=N^\alpha\).
    If for some \(k\ge 2\) all \(A_i\) are \(\alpha\)-mesoscopic, then~\cref{theorem multi G local law} improves upon~\cite[Theorem 2.5]{MR4479913} whenever \(\eta\ll N^{( 1  -\alpha k) /(k-1)}\).
\end{remark}

Local laws on optimal scales can give certain  information on eigenvectors as well. Let $\lambda_1\le \lambda_2 \le\ldots
\le \lambda_N$ denote the eigenvalues and $\{ {\bm u}_i\}_{i=1}^N$ 
the corresponding orthonormal eigenvectors of $W$. Already the single-resolvent isotropic local law~\eqref{oneG}
implies the \emph{eigenvector delocalization}, i.e.\ that $\| {\bm u_i} \|_\infty \prec N^{-1/2}$. 
More generally\footnote{Under stronger
decay conditions on the distribution of $\chi_\mathrm{d}, \chi_\mathrm{od}$
even the optimal bound $\| {\bm u_i} \|_\infty \le C \sqrt{\log N/N}$
for the bulk and $\| {\bm u_i} \|_\infty \le C \log N/\sqrt{N}$ for the edge eigenvectors has been proven~\cite{MR3418916},
see also~\cite{MR3534074} for a comprehensive summary of related results. Very recently even 
the optimal constant $C$ has been identified~\cite{MR4370471}.}  $|\langle {\bm x}, {\bm u_i} \rangle|\prec N^{-1/2}\norm{{\bm x}} $, i.e.\ eigenvectors behave as completely random unit vectors in the sense of considering their rank-\(1\) 
projections onto  any deterministic vector ${\bm x}$. 
 This concept can be greatly extended  to arbitrary deterministic 
observable matrix $A$ leading to the following results motivated both by 
thermalisation ideas from physics~\cite{29862983,9897286,9964105,1509.06411}
 as well as by \emph{Quantum (Unique) Ergodicity (QUE)} in 
mathematics~\cite{Shni1974,MR916129,Col1985,MR1361757,MR1266075,MR2757360,MR2680500,MR1810753,MR3961083,MR3322309,MR3962004,MR3688032}.
  
\begin{theorem}[Eigenstate Thermalization Hypothesis]\label{thm:eth}
Let \(W\) be a Wigner matrix satisfying~\cref{Wigner def} and let \(\delta>0\). Then for any deterministic matrix \(A\) and any bulk indices \(i,j\in[\delta N,(1-\delta)N]\) it holds that 
\begin{equation}\label{eq ETH}
    \abs*{\braket{\bm u_i,A\bm u_j}-\delta_{ij}\braket{A}} \prec \frac{\braket{|\mathring A|^2}^{1/2}}{N^{1/2}},
\end{equation}
where \(\mathring A:=A-\braket{A}\) is the traceless part of $A$.
\end{theorem}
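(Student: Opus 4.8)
The plan is to derive \cref{thm:eth} directly from the \(k=2\) case of \cref{theorem multi G local law} together with bulk eigenvalue rigidity, following the strategy already used for the weaker bound~\eqref{eth} in~\cite{2012.13215} but now exploiting the Hilbert--Schmidt error term. First I would reduce to traceless observables: since \(\braket{\bm u_i,\bm u_j}=\delta_{ij}\), we have \(\braket{\bm u_i,A\bm u_j}-\delta_{ij}\braket A=\braket{\bm u_i,\mathring A\bm u_j}\), so it suffices to prove \(\abs{\braket{\bm u_i,A\bm u_j}}\prec N^{-1/2}\braket{AA^\ast}^{1/2}\) for \emph{traceless} \(A\) (the case \(A=0\), i.e.\ \(\braket{AA^\ast}=0\), being trivial).

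The mechanism is the elementary spectral identity, valid for \(E_1,E_2\in\R\), \(\eta>0\) and \(z_a:=E_a+\ii\eta\),
\begin{equation*}
  \braket{\Im G(z_1)\,A\,\Im G(z_2)\,A^\ast}
  =\frac1N\sum_{k,l}\frac{\eta}{(\lambda_k-E_1)^2+\eta^2}\,\frac{\eta}{(\lambda_l-E_2)^2+\eta^2}\,\abs{\braket{\bm u_k,A\bm u_l}}^2 ,
\end{equation*}
whose right-hand side is a sum of non-negative terms. I would take \(E_1=\gamma_i\), \(E_2=\gamma_j\) (the classical eigenvalue locations) and \(\eta:=N^{-1+\epsilon}\) for a small \(\epsilon>0\). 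Bulk rigidity gives \(\abs{\lambda_i-\gamma_i}+\abs{\lambda_j-\gamma_j}\prec N^{-1}\ll\eta\), so the two weights indexed by \(k=i\), \(l=j\) are each at least \((2\eta)^{-1}\) with very high probability; keeping only this term yields
\begin{equation*}
  \abs{\braket{\bm u_i,A\bm u_j}}^2\ \le\ 4N\eta^2\,\braket{\Im G(z_1)\,A\,\Im G(z_2)\,A^\ast}\qquad\text{with very high probability.}
\end{equation*}

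To bound the right-hand side I would write \(\Im G(z_a)=(2\ii)^{-1}(G(z_a)-G(\overline{z_a}))\), so that \(\braket{\Im G(z_1)A\Im G(z_2)A^\ast}\) becomes a fixed linear combination of four quantities \(\braket{G(w_1)\,A\,G(w_2)\,A^\ast}\) with \(w_a\in\{z_a,\overline{z_a}\}\), and apply \cref{theorem multi G local law} with \(k=2\) to each (both \(A\) and \(A^\ast\) are traceless, \(\braket{\abs{A^\ast}^2}=\braket{\abs A^2}=\braket{AA^\ast}\), and \(\eta\), \(\rho\), \(d<10\) coincide for all four terms since \(\gamma_i,\gamma_j\) lie in the bulk and \(N\eta\rho\sim N^\epsilon\)). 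The deterministic main terms combine to \(\Im m(z_1)\,\Im m(z_2)\,\braket{AA^\ast}=\rho(z_1)\rho(z_2)\braket{AA^\ast}\), and since \(\rho\le 1\) and \(\sqrt{\rho/(N\eta)}\lesssim 1\) this gives \(\braket{\Im G(z_1)A\Im G(z_2)A^\ast}\prec\braket{AA^\ast}\). Inserting this into the previous display yields \(\abs{\braket{\bm u_i,A\bm u_j}}^2\prec N\eta^2\braket{AA^\ast}=N^{-1+2\epsilon}\braket{AA^\ast}\), hence \(\abs{\braket{\bm u_i,A\bm u_j}}\prec N^{-1/2+\epsilon}\braket{AA^\ast}^{1/2}\); as \(\epsilon>0\) is arbitrary, this is exactly~\eqref{eq ETH}.

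The only substantial ingredient is \cref{theorem multi G local law} itself — extracting the Hilbert--Schmidt norm \(\braket{\abs{A}^2}^{1/2}\) instead of \(\norm A\) in the multi-resolvent error — which is the main point of the paper and is established in \cref{sec loclaw proof} via the system of master inequalities; granted that input, the deduction above is routine and involves no DBM or GFT. The only points requiring a little care are the uniformity of \cref{theorem multi G local law} in the spectral parameters (so that the deterministic choice \(z_a=\gamma_{i/j}+\ii\eta\) is admissible — immediate from the definition of stochastic domination) and the standard bulk rigidity estimate used to lower-bound the two spectral weights; at the spectral edge the rigidity scale degrades to \(N^{-2/3}\), which is why that regime is deferred.
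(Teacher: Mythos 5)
Your proof is correct and follows exactly the route the paper sketches: spectral decomposition of $\braket{\Im G(z_1)\mathring A\Im G(z_2)\mathring A^*}$, bulk rigidity to isolate the $(i,j)$ term at scale $\eta=N^{-1+\epsilon}$, then \cref{theorem multi G local law} with $k=2$ to bound the average by $\braket{\mathring A\mathring A^*}$. The paper compresses these steps into a single displayed inequality with a reference to~\cite[Lemma 1]{2012.13215}; you have simply written out the same argument in full.
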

\begin{remark}~
\begin{enumerate}
    \item The result~\cref{eq ETH} was established in~\cite{MR4334253} with $\braket{\mathring A\mathring A^\ast}^{1/2}$ 
    replaced by \(\norm{\mathring A}\) uniformly in the spectrum (i.e. also for edge indices). 
    \item For rank-\(1\) matrices \(A=\vx\vx^\ast\) the bound~\cref{eq ETH} immediately implies the complete delocalisation of 
    eigenvectors in the form \(\abs{\braket{\vx,\bm u_i}}\prec N^{-1/2}\norm{\vx}\).
\end{enumerate}    
\end{remark}

Theorem~\ref{thm:eth} directly follows from the bound
\[
  \max_{i,j\in[\delta N,(1-\delta)N] } 
  N  \abs*{\braket{\bm u_i,\mathring{A}\bm u_j}}^2 \le C_\delta (N\eta)^2 \max_{E, E' \in [-2+\epsilon, 2-\epsilon]}
  \braket{ \Im G(E+\ii \eta) \mathring{A} \Im G(E'+\ii \eta)\mathring{A}^*}
\]
that is obtained by the spectral decomposition of both resolvents and the well-known eigenvalue
rigidity, with some explicit $\delta$-dependent constants $C_\delta$ and 
 $\epsilon=\epsilon(\delta)>0$ (see~\cite[Lemma 1]{MR4334253} for more details). The right hand side can be directly estimated 
using~\eqref{loc1} and finally choosing $\eta= N^{-1+\xi}$ for any small $\xi>0$ 
gives~\eqref{eq ETH} and thus proves Theorem~\ref{thm:eth}.

The next question is to establish a central limit theorem for the diagonal overlap in~\cref{eq ETH}. 
\begin{theorem}[Central Limit Theorem in the QUE]\label{theo:flucque}
    Let \(W\)  be a  real symmetric (\(\beta=1\)) or complex Hermitian (\(\beta=2\)) 
    Wigner matrix satisfying~\cref{Wigner def}.
    Fix small \(\delta,\delta'>0\) and let \(A=A^*\)  be a deterministic \(N\times N\) matrix with 
    \(N^{-1+\delta'}\norm{\mathring{A}}^2\lesssim\braket{\mathring{A}^2}\lesssim 1\). In the real symmetric case we also assume that \(A\in\mathbf{R}^{N\times N}\)
    is real.  Then for any bulk index \(i\in [\delta N, (1-\delta) N]\) we have a central limit theorem
    \begin{equation}
        \label{eq:clt}
        \sqrt{\frac{\beta N}{2\braket{\mathring{A}^2}}} \big[\braket{{\bm u}_i,A {\bm u_i}}-\braket{A}\big]\Rightarrow \mathcal{N},\qquad \mbox{as 
        \;\; \(N\to\infty\)}
    \end{equation}
    with \(\mathcal{N}\) being a standard real Gaussian random variable. Moreover, for any moment the speed of convergence is explicit (see~\eqref{eq:firststep}).
\end{theorem}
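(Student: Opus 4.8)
The plan is to follow the two-step strategy of \cite{2103.06730}: first establish the central limit theorem for Gaussian-divisible ensembles via a Dyson Brownian motion (DBM) analysis of the eigenvector overlaps, and then remove the added Gaussian component by a Green function comparison (GFT) argument. The single genuinely new ingredient is the \emph{rank-uniform} local law, \cref{theorem multi G local law} and \cref{corol}, in which the operator norm $\norm{\mathring A}$ is everywhere replaced by the Hilbert--Schmidt norm $\braket{\mathring A^2}^{1/2}$; this is exactly what lets the scheme run under the optimal hypothesis $N^{-1+\delta'}\norm{\mathring A}^2\lesssim\braket{\mathring A^2}$ rather than the much stronger $\braket{\mathring A^2}\gtrsim\norm{\mathring A}^2$ imposed in \cite{2103.06730}.

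\textbf{Step 1: DBM for the Gaussian-divisible case.} Consider the Ornstein--Uhlenbeck flow $W_t=e^{-t/2}W_0+\sqrt{1-e^{-t}}\,U$, with $U$ an independent GOE/GUE matrix, run up to a short time $t=N^{-1+\omega}$. Its orthonormal eigenvectors $\{\bm u_i(t)\}$ satisfy the \emph{stochastic eigenstate equation} of Bourgade--Yau--Yin \cite{MR4156609}; the relevant observable $p_{ij}(t):=\braket{\bm u_i(t),\mathring A\bm u_j(t)}$, together with its higher-order counterparts indexed by particle configurations, evolves under a parabolic flow on a high-dimensional configuration space driven by a martingale term. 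Following the detailed analysis of Marcinek--Yau \cite{2005.08425} as adapted in \cite{2103.06730}, the flow is contractive: by time $t=N^{-1+\omega}$ the dependence on the initial data $W_0$ is lost up to negligible errors, and the rescaled diagonal overlap $\sqrt{\beta N/(2\braket{\mathring A^2})}\,p_{ii}(t)$ has asymptotically all the moments of a standard real Gaussian (the factor $2/\beta$ in the variance reflecting the symmetry class). The improved local law enters here twice: to control the coefficients and error terms of the flow uniformly in the rank of $\mathring A$, and to supply the a priori bounds on $p_{ij}(t)$ that were previously available only in the large-rank regime — \cref{corol} (applied with a centred rank-one matrix) together with \cref{theorem multi G local law} provides them in full generality.

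\textbf{Step 2: GFT comparison.} For a general Wigner matrix $W$ as in \cref{Wigner def} one writes $W\stackrel{\mathrm d}{\approx}\widetilde W_t=e^{-t/2}\widetilde W_0+\sqrt{1-e^{-t}}\,U$, choosing $\widetilde W_0$ so that the first four moments of the entries of $\widetilde W_t$ match those of $W$ up to errors that are harmless since $t=N^{-1+\omega}$ is tiny. A resolvent-based continuity argument — interpolating the entries one by one and using the averaged and isotropic rank-uniform local laws (\cref{theorem multi G local law}, \cref{corol}) with $k=1,2$ to bound the resulting derivatives — shows that $\E F\big(\sqrt{\beta N/(2\braket{\mathring A^2})}\,\braket{\bm u_i,\mathring A\bm u_i}\big)$ changes only negligibly along the interpolation, for every fixed smooth test function $F$, equivalently for every fixed moment. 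Here it is crucial that the error terms carry $\braket{\mathring A^2}^{1/2}$ and not $\norm{\mathring A}$, since otherwise each interpolation step would only be controllable when $\mathring A$ has essentially full rank. Combining Steps 1 and 2 yields \eqref{eq:clt} with the explicit moment bounds stated in \eqref{eq:firststep}.

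\textbf{Main obstacle.} As in \cite{2103.06730}, the technically delicate part is the DBM analysis: establishing the loss of memory of $W_0$ with quantitative error terms on the geometrically complicated configuration space of the stochastic eigenstate equation. A large portion of this is already available from \cite{2005.08425,2103.06730}; the new work is to revisit every a priori input and every error estimate so that the dependence on $A$ enters only through $\braket{\mathring A^2}^{1/2}$. Once the rank-uniform local laws of \cref{sec loclaw proof} are in hand this becomes essentially bookkeeping, which is why we only spell out the differences from \cite{2103.06730} — in \cref{sec:see} for the DBM part and \cref{sec:GFT} for the GFT part.
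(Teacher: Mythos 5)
Your proposal follows the same two-step DBM-plus-GFT strategy that the paper uses (stochastic eigenstate equation with perfect matching observables as in \cite{MR4156609,2005.08425,2103.06730}, followed by a Green function comparison), and it correctly identifies the rank-uniform local laws of \cref{theorem multi G local law} and \cref{corol} as the decisive new input that allows the Hilbert--Schmidt norm of $\mathring A$ to replace its operator norm in both the DBM a priori bounds and the GFT derivative estimates. The only cosmetic difference is that you phrase the GFT step as a discrete entrywise moment-matching interpolation, whereas the paper runs the continuous Ornstein--Uhlenbeck flow and applies It\^o's formula together with a cumulant expansion (with the key overlap input coming from \cref{thm:eth} rather than $k=1,2$ local laws directly); these are standard equivalent variants of the same comparison argument.
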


We require that $\braket{\mathring{A}^2}\gtrsim N^{-1+\delta'}\norm{\mathring{A}}^2$ in order to ensure that the spectral
distribution of $\mathring{A}$ is not concentrated to a finite number eigenvalues, i.e. that $\mathring{A}$ has effective rank $\gg 1$. Indeed, the statement in \eqref{eq:clt} does not hold for finite rank $A$'s, e.g.
if $A=\mathring{A}=|{\bf e}_x\rangle\langle {\bm e}_x|-|{\bm e}_y\rangle\langle {\bf e}_y|$, for some
$x\ne y\in [N]$, then $\braket{{\bm u}_i,\mathring{A} {\bm u_i}}=|{\bm u}_i(x)|^2-|{\bm u}_i(y)|^2$, which is the difference of two asymptotically independent $\chi^2$-distributed random variables (e.g. see \cite[Theorem 1.2]{MR3606475}).
More generally, the joint distribution of finitely many eigenvector overlaps has been identified in~\cite{MR3606475,MR4272266,MR4260468,MR3690289} for various related ensembles. 

\section{Proof of Theorem~\ref{theorem multi G local law}}\label{sec loclaw proof}
Within this section we prove~\cref{theorem multi G local law} in the critical  \(d<10\) regime. 
The \(d\ge 10\) regime is handled similarly but the estimates are much simpler; 
the necessary modifications are outlined in~\cref{appendix large d}.

In the subsequent proof we will often assume that a priori bounds, with some control parameters \(\psi_K^{\av/\iso}\ge 1\), of the form 
\begin{align}\label{a priori 0}
    \Psi_0^\av &=\Psi_0^\av(z_1):=N\eta \abs{\braket{G_1-m_1}}\prec\psi^\av_0\\\label{a priori k}
    \Psi_K^\av &=\Psi_K^\av(\bm A,\bm z):=\frac{N^{(3-K)/2}\eta^{1/2}}{\rho^{1/2} \prod_i \braket{\abs{A_i}^2}^{1/2}} \abs{\braket{[G_1A_1\cdots G_KA_K-m_1\cdots m_K A_1\cdots A_K}} \prec \psi^\av_K, \qquad  K\ge 1,  \\\label{a priori k iso}
    \Psi_K^\iso &=\Psi_K^\iso(\vx,\vy,\bm A,\bm z) :=\frac{N^{(1-K)/2 }\eta^{1/2} \rho^{-1/2}}{\norm{\vx}\norm{\vy} \prod_i \braket{\abs{A_i}^2}^{1/2}} \abs{\braket{\vx,[G_1A_1\cdots G_{K+1}-m_1\cdots m_{K+1} A_1\cdots A_K]\vy}} \prec \psi^\iso_K 
\end{align}
for certain indices \(K\ge 0\) have been established uniformly in deterministic traceless matrices \(\bm A=(A_1,\ldots,A_K)\), deterministic vectors $\vx, \vy$, and spectral parameters \(\bm z=(z_1,\ldots,z_K)\) with\footnote{In some estimates the domain of uniformity for the spectral parameters may shrink a bit. 
The pedantic way to track this effect is to define the concept of $(\epsilon, \ell)$-uniformity (see \cite[Definition 3.1]{MR4479913}) meaning
that an estimate holds uniformly for $N\rho \eta \ge \ell N^\epsilon$ with some $\ell\in \N$. We keep $\epsilon$ fixed but
$\ell$ may increase by one in some steps. However, this happens only finitely many times
and it is inconsequential to our main argument, hence we entirely omit tracking the $\ell$-dependence.\label{uniformity footnote}} \(N\eta\rho\ge N^\epsilon\). We stress that we do \emph{not} assume the estimates to be uniform in $K$.
Note that $\psi_0^\av$ is defined somewhat differently from $\psi_K^\av$, $K\ge 1$, but the definition of $\psi^\iso_K$ is the same for all $K\ge 0$. For intuition, the reader should think of the control parameters as essentially order one quantities, 
in fact our main goal will be to prove this fact. Note that by~\cref{theorem G av local law} we may set \(\psi_0^{\av/\iso}=1\).

As a first step we observe that~\cref{a priori 0,a priori k,a priori k iso} immediately imply estimates on more general averaged resolvent chains and isotropic variants. 
\begin{lemma}\label{lemma Psi G prod}
    (i)  Assuming~\cref{a priori 0} and~\cref{a priori k iso} for \(K=0\) holds uniformly in \(z_1\), then for any \(z_1,\ldots,z_l\) with \(N\eta\rho\ge N^\epsilon\) it holds that
    \begin{equation}
    \begin{split}
        \abs*{\braket{G_1 G_2\cdots G_l-m[z_1,\ldots,z_l]}}&\prec\frac{\psi^\av_0}{N\eta^{l}}, \\
         \abs*{\braket{\vx,(G_1 G_2\cdots G_l-m[z_1,\ldots,z_l])\vy}} &\prec\frac{\norm{\vx}\norm{\vy}\psi^\iso_0}{\eta^{l-1}}\sqrt{\frac{\rho}{N\eta}}
        \end{split}
    \end{equation} 
    where $m[z_1,\ldots,z_l]$ stands for the $l$-th divided difference of the function $m(z)$ from~\eqref{MDE}, explicitly
    \begin{equation}\label{divdiff}
        m[z_1,\ldots,z_l] = \int_{-2}^2\frac{\sqrt{4-x^2}}{2\pi}\prod_{i=1}^l \frac{1}{x-z_{i}}\dif x.
    \end{equation}
    (ii)  Assuming for some \(k\ge 1\) the estimates~\cref{a priori k,a priori k iso} for \(K=k\) have been established uniformly, then for 
    \(\cG_j:=G_{j,1}\cdots G_{j,l_j}\) with \(G_{j,i}\in \set{G(z_{j,i}),\abs{G(z_{j,i})}}\), 
    traceless matrices $A_i$ and \(\eta:=\min_{j,i}\abs{\Im z_{j,i}}\), \(\rho := \max_{j,i} \rho(z_{j,i})\) it holds that
    \begin{equation}\label{multicalG}
        \begin{split}
            \abs*{\braket{\cG_1A_1\cdots \cG_k A_k - m^{(1)}\cdots m^{(k)} A_1\cdots A_k} }&\prec \psi^\av_k N^{k/2-1}\sqrt{\frac{\rho}{N\eta}}  \prod_j \frac{\braket{\abs{A_j}^2}^{1/2}}{\eta^{l_j-1}},  \\
            \abs*{\braket{\vx,[\cG_1A_1\cdots  A_k\cG_{k+1} -m^{(1)}\cdots m^{(k+1)} A_1\cdots A_k]\vy} }&\prec \psi^\iso_k \norm{\vx}\norm{\vy}N^{k/2}\sqrt{\frac{\rho}{N\eta}}  \prod_j \frac{\braket{\abs{A_j}^2}^{1/2}}{\eta^{l_j-1}}, 
        \end{split}
    \end{equation}
    where
    \begin{equation}
        m^{(j)} := \int_{-2}^2\frac{\sqrt{4-x^2}}{2\pi} \prod_i g_{j,i}(x)\dif x
    \end{equation}
    and \(g_{j,i}(x)=(x-z_{j,i})^{-1}\) or \(g_{j,i}(x)=\abs{x-z_{j,i}}^{-1}\), depending on whether \(G_{j,i}=G(z_{j,i})\) or \(G_{j,i}=\abs{G(z_{j,i})}\).
\end{lemma}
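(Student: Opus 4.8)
\emph{Overall strategy.} Both parts are mere reductions of general resolvent chains to the standard single‑resolvent chains for which the a priori bounds \eqref{a priori 0}, \eqref{a priori k}, \eqref{a priori k iso} are assumed, carried out through three elementary moves: (a) the integral representation \(\abs{G(E+\ii\eta)}=\frac{2}{\pi}\int_\eta^\infty(s^2-\eta^2)^{-1/2}\Im G(E+\ii s)\dif s\), which rewrites every \(\abs G\) through resolvents of imaginary part \(\ge\eta\); (b) the fact that a product \(G(\zeta_1)\cdots G(\zeta_p)\) of commuting resolvents lying in the same open half‑plane is a divided difference of \(w\mapsto G(w)\), hence equals \(\frac{1}{2\pi\ii}\oint_\Gamma\frac{G(w)}{\prod_i(w-\zeta_i)}\dif w\) for a short contour \(\Gamma\) enclosing \(\zeta_1,\dots,\zeta_p\) at distance \(\sim\eta\) from each of them (hence also from \([-2,2]\)); and (c) the resolvent identity \(G(\zeta_1)G(\zeta_2)=(\zeta_1-\zeta_2)^{-1}(G(\zeta_1)-G(\zeta_2))\), used to peel off pairs \(\zeta_1,\zeta_2\) from opposite half‑planes, which are automatically \(\abs{\zeta_1-\zeta_2}\ge2\eta\) apart.

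\emph{Part (i).} Since the \(G_i=G(z_i)\) commute, \(\braket{G(z_1)\cdots G(z_l)}\) equals the \(l\)-th divided difference of \(w\mapsto\braket{G(w)}\) at \(z_1,\dots,z_l\), while \(m[z_1,\dots,z_l]\) is the same divided difference of \(m\); hence the left‑hand side equals \(g[z_1,\dots,z_l]\) with \(g(w):=\braket{G(w)-m(w)}\), analytic off \(\mathbf{R}\). After using (c) to strip off any node from the opposite half‑plane, at cost \((2\eta)^{-1}\) each, I would bound the divided difference over the remaining same‑sign nodes by \(\frac{1}{2\pi}\oint_\Gamma\abs{g(w)}\,\abs{\dif w}/\abs*{\prod_i(w-z_i)}\): choosing \(\Gamma\) at distance \(\sim\eta\) from \(\mathbf{R}\) and from each node, \eqref{a priori 0} gives \(\abs{g(w)}\prec\psi^\av_0/(N\,\Im w)\lesssim\psi^\av_0/(N\eta)\) on \(\Gamma\), and \(\oint_\Gamma\abs{\dif w}/\abs*{\prod_i(w-z_i)}\lesssim\eta^{1-l}\) (clustered nodes being the worst configuration), which together yield \(\psi^\av_0/(N\eta^l)\). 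The isotropic bound is identical with \(g(w):=\braket{\vx,(G(w)-m(w))\vy}\) and \(\abs{g(w)}\prec\norm{\vx}\norm{\vy}\psi^\iso_0\sqrt{\rho(w)/(N\,\Im w)}\) from \eqref{a priori k iso} with \(K=0\).

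\emph{Part (ii).} First apply (a) to every \(\abs{G(z_{j,i})}\) in each block \(\cG_j\) and expand the resulting \(\Im G\)'s into differences of resolvents, turning \(\cG_j\) into an absolutely convergent superposition of products of ordinary resolvents with imaginary parts \(\ge\eta\); at the level of scalar symbols this replaces \(\prod_i g_{j,i}(x)\) by exactly the \(m^{(j)}\) of the statement. Using (c) on opposite‑half‑plane pairs and (b) on the remaining same‑sign products, write \(\cG_j=\frac{1}{2\pi\ii}\oint_{\Gamma_j}\frac{G(w_j)}{\prod_i(w_j-\zeta_{j,i})}\dif w_j\), so that
\[
  \braket{\cG_1A_1\cdots\cG_kA_k}=\frac{1}{(2\pi\ii)^k}\oint_{\Gamma_1}\!\!\cdots\oint_{\Gamma_k}\braket{G(w_1)A_1\cdots G(w_k)A_k}\prod_j\frac{\dif w_j}{\prod_i(w_j-\zeta_{j,i})}.
\]
Inserting \eqref{a priori k} into the inner chain, its deterministic part \(\prod_jm(w_j)\braket{A_1\cdots A_k}\) integrates, one contour at a time, back to \(\prod_jm^{(j)}\cdot\braket{A_1\cdots A_k}\), while its fluctuation is \(\prec\psi^\av_kN^{(k-3)/2}\rho^{1/2}\eta^{-1/2}\prod_i\braket{\abs{A_i}^2}^{1/2}\) uniformly on the contours (where \(\Im w_j\ge\eta\) and \(\rho(w_j)\sim\rho\)); multiplying by \(\prod_j\oint_{\Gamma_j}\abs{\dif w_j}/\abs*{\prod_i(w_j-\zeta_{j,i})}\lesssim\prod_j\eta^{1-l_j}\) gives the asserted bound, and the isotropic statement follows verbatim from \eqref{a priori k iso}.

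\emph{Main obstacle.} There is little conceptual content here; the effort is bookkeeping. The points requiring care are: choosing the contours \(\Gamma_j\) so that the length‑versus‑denominator trade‑off produces the sharp exponent \(\eta^{1-l_j}\) in every arrangement of the \(\zeta_{j,i}\), and, since no single enclosing contour exists for nodes in opposite half‑planes, handling those via the splitting (c) using that such pairs are \(\ge2\eta\)-separated; verifying that the a priori bounds are legitimately invoked along the contours, i.e. that \(N(\Im w_j)\rho(w_j)\ge N^{\epsilon'}\) persists, which follows from the stability of \(\rho\) under \(O(\eta)\)-perturbations; and disposing of the \(s\to\infty\) tails in (a), which involve only resolvents far from \([-2,2]\) and are negligible, while recombining the leading deterministic contributions into \(\prod_jm^{(j)}A_1\cdots A_k\).
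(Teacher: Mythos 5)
The paper gives no standalone proof here; it refers to \cite[Lemma~3.2]{2112.13693}. Your argument reconstructs exactly the standard toolkit used there and elsewhere in this literature: (a) the integral representation $\abs{G(E+\ii\eta)}=\tfrac{2}{\pi}\int_\eta^\infty (s^2-\eta^2)^{-1/2}\Im G(E+\ii s)\,\dif s$ to eliminate $\abs G$'s, (b) the divided-difference / contour representation of same-half-plane products, and (c) the resolvent identity for opposite-half-plane pairs. I checked (a) on the scalar level and it is correct, and the bookkeeping of $\eta$-powers (one $\eta^{-1}$ per removed resolvent, via (c) or a Cauchy estimate) reproduces the $\eta^{1-l_j}$ factors, with the deterministic parts recombining into $m[z_1,\ldots,z_l]$ resp.\ $m^{(j)}$ precisely because divided differences of $m$ are $m[\cdots]$ and the $|G|$-representation reproduces $|x-z|^{-1}$. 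You also correctly flag the $s\to\infty$ tails (handled by the trivial norm bound away from $[-2,2]$) and the need to check $N(\Im w)\rho(w)\ge N^\epsilon$ persists along the contours.

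One small imprecision worth repairing: the step ``choosing $\Gamma$ at distance $\sim\eta$ from $\mathbf{R}$ and from each node gives $\oint_\Gamma \abs{\dif w}/\abs{\prod_i(w-z_i)}\lesssim\eta^{1-l}$'' is not literally true for a single enclosing contour when the nodes are spread out (the contour length is then $\sim 1$ rather than $\sim\eta$, so the naive bound loses a factor $\eta^{-1}$). The clean way to get the claimed estimate is via the Hermite--Genocchi formula combined with a Cauchy estimate on the $(l-1)$-st derivative (using that the convex hull of nodes with $\abs{\Im z_i}\ge\eta$ stays at distance $\ge\eta$ from $\mathbf{R}$), or equivalently by iterating the resolvent identity (c) for \emph{all} pairs — both close and far, same-sign and opposite-sign — gaining $\abs{z_i-z_j}^{-1}$ each time and treating confluent nodes as derivatives. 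Either route yields $\eta^{1-l}\sup_{\Im w\gtrsim\eta}\abs{g(w)}$, which is what you use. With that fix, the proof is correct and is, to my reading, the same approach as the cited lemma.
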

\begin{proof}
    Analogous to~\cite[Lemma 3.2]{MR4479913}.
\end{proof}

The main result of this section is the following hierarchy of \emph{master inequalities}.
\begin{proposition}[Hierarchy of master inequalities]\label{prop master}
    Fix \(k\ge 1\), and assume that~\cref{a priori k,a priori k iso} have been established uniformly in \(\bm A\), and \(\bm z\)  with \(N\eta\rho\ge N^\epsilon\),
    for all \( K\le 2k\). Then\footnote{Following\footnoteref{uniformity footnote}, we omit tracking the spectral domains in the main text. We only  mention here that the pedantic formulation of Proposition~\ref{prop master} would assert that  if \eqref{a priori k}--\eqref{a priori k iso} hold $(\epsilon, \ell)$-uniformly, for some $\ell\in \N$, then the conclusions in \eqref{Psi master av}--\eqref{Psi master iso} hold $(\epsilon, \ell+1)$-uniformly.
}  it holds that 
    \begin{align}\label{Psi master av}
        \Psi_k^\av &\prec \Phi_k + \Bigl(\frac{\psi_{2k}^\av}{\sqrt{N\eta\rho}}\Bigr)^{1/2} + \psi^\av_{k-1}+\frac{\psi^\av_k}{\sqrt{N\eta}}+ (\psi_k^\iso)^{2/3}\Phi_{k-1}^{1/3} +\sum_{j=1}^{k-1}\sqrt{\psi^\iso_j\Omega_{k-j}(\psi_k^\iso+\Phi_{k-1})} \\\nonumber
        &\qquad\qquad+\frac{1}{N\eta}\sum_{j=1}^{k-1}\psi^\av_{j}\Bigl(1+\psi^\av_{k-j}\sqrt{\frac{\rho}{N\eta}}\Bigr)
        \\\label{Psi master iso}
        \Psi_k^\iso &\prec \Phi_k + \psi^\iso_{k-1}  +\frac{1}{N\eta}\Bigl[ \sum_{j=1}^k \psi_j^\av\Bigl(1+\sqrt{\frac{\rho}{N\eta}}\psi_{k-j}^\iso\Bigr)+\sum_{j=0}^{2k}\sqrt{\psi_{j}^\iso\psi_{2k-j}^\iso}+\psi_k^\iso\Bigr]
    \end{align}
    with the definitions
    \begin{equation}\label{Phi k def}
        \Omega_k := \sum_{k_1+k_2 +\cdots \le k}\prod_{i\ge 1}\Bigl(1+\psi_{k_i}^\iso\sqrt{\frac{\rho}{N\eta}}\Bigr)\le\Phi_k:=\sum_{k_1+k_2+\cdots\le k}\prod_{i=1}^2\Bigl(1+\frac{\psi^\iso_{2k_i}}{\sqrt{N\eta\rho}}\Bigr)^{1/2}\prod_{i\ge 3}\Bigl(1+\psi^\iso_{k_i}\sqrt{\frac{\rho}{N\eta}}\Bigr),    
    \end{equation}
    where the second sum is taken over an arbitrary number of  non-negative integers \(k_i\), with $k_i\ge 1$
    for $i\ge 3$, under the condition that their sum does not exceed $k$ (in the case of only one non-zero \(k_1\)
    the second factor and product in~\cref{Phi k def} are understood to be one and $\Phi_0=1$). 
\end{proposition}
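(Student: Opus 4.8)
The plan is to derive the two master inequalities by running the standard cumulant-expansion / self-consistent-equation machinery for the resolvent chain $G_1A_1\cdots G_kA_k$, but bookkeeping every appearance of a test matrix $A_i$ through its Hilbert--Schmidt norm $\braket{\abs{A_i}^2}^{1/2}$ rather than its operator norm, and tracking the $\rho$- and $\eta$-powers carefully enough to see the $\sqrt{\rho/(N\eta)}$ gain that defines the control parameters in~\eqref{a priori k},~\eqref{a priori k iso}. First I would write down the underlying identity: applying $W G_j = -1 - z_j G_j$ inside the chain, one gets an equation expressing $\braket{G_1A_1\cdots G_kA_k}$ in terms of the deterministic approximation $m_1\cdots m_k\braket{A_1\cdots A_k}$ plus (a) lower-order chains (fewer resolvents), which feed in the $\psi_{k-1}^{\av}$, $\psi_j^{\av}$, $\psi_j^{\iso}$ terms, and (b) a genuine fluctuation term $\braket{\underline{WG_1A_1\cdots G_kA_k}}$ where the underline denotes the cumulant-expansion renormalisation (subtracting the term obtained by contracting $W$ with the $G$'s in the self-consistent way). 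The whole content is to estimate this renormalised fluctuation term in high moments.

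The second step, the heart of the argument, is the high-moment bound on the fluctuation term. Here I would perform the cumulant expansion of $\E\abs{\braket{\underline{WG_1A_1\cdots G_kA_k}}}^{2p}$: the second-order (Gaussian) term produces, after pairing the $W$ with resolvents in the other chains and in the conjugate copies, products of shorter chains of the type $\cG_j A_i\cdots$, which by~\cref{lemma Psi G prod} are controlled by the $\psi^{\av/\iso}$ with the correct Hilbert--Schmidt and $\eta$-powers; crucially the pairing splits one long chain of length $k$ into two chains whose lengths add up to about $2k$, which is exactly why the hypothesis is needed for all $K\le 2k$ and why $\psi_{2k}^{\av}$, $\psi_{2k-j}^{\iso}$, $\psi_{2k_i}^{\iso}$ enter. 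The key structural input is the Ward identity $G^*G=\Im G/\eta$, which converts a pair of resolvents sharing a $W$-contraction into a single $\Im G$ at the cost of $1/\eta$, and simultaneously a factor $\rho$ from $\braket{\Im G\cdots}\sim\rho$; iterating this over all the contractions is what generates the combinatorial sum over partitions $k_1+k_2+\cdots\le k$ in the definition~\eqref{Phi k def} of $\Phi_k$. Higher-order cumulants ($n\ge 3$) come with extra $N^{-1/2}$-type smallness and are absorbed into the same structure; the isotropic case is handled identically with the two boundary vectors $\vx,\vy$ playing the role of two extra "half" test matrices, which is why the isotropic sum in~\eqref{Psi master iso} runs over $\sqrt{\psi_j^{\iso}\psi_{2k-j}^{\iso}}$ with $j$ from $0$ to $2k$.

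The third step is to assemble the pieces: collect all contributions from the expansion, use Young's inequality to split mixed products (this is where terms like $(\psi_k^{\iso})^{2/3}\Phi_{k-1}^{1/3}$ and $\sqrt{\psi_j^{\iso}\Phi_{k-j}(\psi_k^{\iso}+\Phi_{k-1})}$ arise, from balancing a term that is schematically $\psi_k^{\iso}$ to a fractional power against lower $\Phi$'s), and feed the resulting bound on the $2p$-th moment back through Markov/Chebyshev to upgrade to stochastic domination, giving~\eqref{Psi master av} and~\eqref{Psi master iso}. I would largely follow the template of~\cite[Section~3 and~4]{2112.13693}, citing it for the parts of the cumulant expansion and power counting that are unchanged, and only spell out the places where estimating $A_i$ by $\braket{\abs{A_i}^2}^{1/2}$ rather than $\norm{A_i}$ requires a different grouping of resolvents and test matrices — essentially one must always keep each $A_i$ sandwiched between two resolvents so that the Hilbert--Schmidt norm appears through a bound of the form $\abs{\braket{B_1 A_i B_2 A_i^* B_3}}\lesssim\braket{\abs{A_i}^2}\norm{B_1}\norm{B_2}\norm{B_3}$ (with $B_\ell$ deterministic-approximation factors), never letting an $A_i$ hit a single factor where only the operator norm would be available.

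The main obstacle I expect is precisely this last point: in the diagrammatic expansion there are terms where, after contractions, a test matrix $A_i$ ends up adjacent to a deterministic scalar factor or to the end of a chain rather than between two resolvents, and naively one can only bound it by $\norm{A_i}$. Overcoming this requires either reorganising the expansion so such configurations are regrouped into genuine two-sided chains, or extracting an extra $\sqrt{\rho/(N\eta)}$ or $N^{-1/2}$ factor elsewhere that compensates for the loss from $\norm{A_i}$ to $\braket{\abs{A_i}^2}^{1/2}$; getting this balance right in \emph{every} diagram simultaneously, and making the induction on $k$ close with the tighter master inequalities than those in~\cite{2112.13693}, is the delicate part of the proof.
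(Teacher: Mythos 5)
Your high-level skeleton matches the paper's proof: a self-consistent equation obtained via the underline renormalisation (paper's~\cref{underline repl lemma}), a cumulant expansion of the high moment of the renormalised fluctuation (paper's~\eqref{WGA cum exp}), the splitting of a length-$k$ chain into a length-$2k$ chain in the Gaussian term (hence the $\psi^\av_{2k}$), Ward-identity improvements after the expansion, and Young's inequality to balance mixed products into the form appearing in~\eqref{Psi master av}--\eqref{Psi master iso}. You also correctly identify that the crux is tracking the Hilbert--Schmidt norm rather than the operator norm. But the specific mechanism you propose for that tracking --- keeping each $A_i$ ``sandwiched between two resolvents'' so that a trace bound of the form $\abs{\braket{B_1 A_i B_2 A_i^* B_3}}\lesssim \braket{\abs{A_i}^2}\norm{B_1}\norm{B_2}\norm{B_3}$ applies --- is not what the paper does, and as stated it would not close the argument.

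After the cumulant expansion, the terms that must be bounded are sums over matrix indices $a,b$ of products of \emph{individual entries} $[(GA)^lG]_{ab}$, $[(GA)^lG]_{aa}$, $[(GA)^l]_{ab}$, etc.\ (see~\eqref{Iod},~\eqref{Xi od def}); the $A_i$'s are not paired with their own adjoints inside a single trace, so there is no trace structure of the kind you describe. The paper's actual device is different and essential: the entrywise a priori bound carries a \emph{column norm}, e.g.\
\begin{equation*}
((GA)^kG)_{ab}\prec N^{k/2-1/2}\Bigl(\norm{A\bm e_a}\wedge\norm{A\bm e_b}+\psi_k^\iso\sqrt{\rho/\eta}\Bigr),
\end{equation*}
(see~\eqref{eq naive iso GAG}--\eqref{eq naive iso GA}), and the Hilbert--Schmidt norm is extracted only after performing the free-index summation, via $\sum_b\norm{A\bm e_b}^2=N\braket{\abs{A}^2}$ and the Schwarz variant $\sum_b\norm{A\bm e_b}\le N$ (see~\eqref{eq A sum}). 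Each cumulant-expansion summation over $a$ or $b$ is then ``paid for'' either by one $\norm{A\bm e_b}^2$ (the HS gain) or by a Ward-identity Schwarz step (see~\eqref{ward}); the detailed case analysis in the proof of~\cref{Xi lemma}, organised by the deficit $\abs{\vl}+\sum J-\abs{J}$, is precisely the bookkeeping of which sums produce which gain. Without this column-norm-summation device your proposal has no concrete way to convert operator-norm-sized entries into HS-norm-sized averages. A smaller inaccuracy: you attribute the partition structure of $\Phi_k$ in~\eqref{Phi k def} to iterating the Ward identity, which is the right intuition, but you miss the asymmetry that only the first \emph{two} factors carry the Ward-improved $\psi^\iso_{2k_i}/\sqrt{N\eta\rho}$ form while the remaining factors do not --- this encodes the fact that a single double sum $\sum_{a,b}$ permits at most two Schwarz-type Ward improvements.
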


This hierarchy has the structure that each $\Psi^{\av/\iso}_k$  is estimated partly by $\psi$'s with index higher than $k$,
which potentially is uncontrollable even if the coefficient of the higher order terms is small (recall that 
$1/(N\eta)$, and $1/(N\eta\rho)$ are small quantities). 
Thus the hierarchy must be complemented by another set of inequalities that estimate higher indexed $\Psi$'s
with smaller indexed ones even at the expense of a large constant. The success of this scheme eventually
depends on the  relative size of these small and large constants, so it is very  delicate.
We prove the following reduction inequalities 
to estimate the $\psi_l^{\av/\iso}$ terms with $k+1\le l\le 2k$ in \eqref{Psi master av}--\eqref{Psi master iso}
by $\psi$'s with indices smaller or equal than $k$. %

\begin{lemma}[Reduction lemma]\label{pro:redin}
    Fix \(1\le j\le k\) and assume that~\cref{a priori k}--\cref{a priori k iso} have been established uniformly for \(K\le 2k\). Then it holds that 
    \begin{equation}
        \label{eq:redinav}
        \Psi_{2k}^\av\lesssim \sqrt{\frac{N\eta}{\rho}}+\begin{cases}
           \sqrt{\frac{\rho}{N\eta}}(\psi_k^\av)^2\quad & k \,\, \mathrm{even}, \\
            \psi_{k-1}^\av+\psi_{k+1}^\av+\sqrt{\frac{\rho}{N\eta}}\psi_{k-1}^\av\psi_{k+1}^\av\quad & k \,\, \mathrm{odd},
                 \end{cases}
    \end{equation}
   and for even $k$ also that
    \begin{equation}
        \label{eq:rediniso}
        \Psi_{k+j}^\iso\lesssim \sqrt{\frac{N\eta}{\rho}}+ \left(\frac{N\eta}{\rho}\right)^{1/4}(\psi_{2j}^\av)^{1/2}+\psi_k^\iso+\left(\frac{\rho}{N\eta}\right)^{1/4}(\psi_{2j}^\av)^{1/2}\psi_k^\iso.    \end{equation}
\end{lemma}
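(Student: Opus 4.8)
The plan is to reduce a long resolvent chain of length $2k$ (or $k+j$) to shorter chains of length $k$ (or $\le 2j$) by splitting the chain in the middle and using a Schwarz-type inequality adapted to resolvent chains. The key algebraic tool is the standard integral representation $GG^* = \frac{1}{2\ii\eta}(G - G^*)$ together with the more general identity that for a chain $\cG = G_1 A_1 \cdots G_k A_k$ one has a bound
\[
\abs{\braket{\cG\, \cG^*}} \lesssim \abs{\braket{\cG_{\mathrm{left}}\, \Im G\, \cG_{\mathrm{left}}^*}}^{1/2}\abs{\braket{\cG_{\mathrm{right}}^*\, \Im G\, \cG_{\mathrm{right}}}}^{1/2}
\]
after inserting $\Im G = \frac{1}{2\ii}(G-G^*)$ at the splitting point and using that $\Im G \ge 0$. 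Concretely, to bound $\Psi_{2k}^\av$ I would write the chain $G_1 A_1 \cdots G_{2k} A_{2k}$, single out the middle resolvent (position $k$ or $k+1$ depending on parity), and apply Cauchy--Schwarz with respect to the positive-semidefinite weight $\Im G$ there. For even $k$ this splits symmetrically into two copies of a length-$k$ chain, producing the $(\psi_k^\av)^2$ term with the correct prefactor $\sqrt{\rho/(N\eta)}$ coming from the two applications of $\braket{\cG \Im G \cG^*} \lesssim \rho \braket{\cG\cG^*}$ and the loss of one $\eta$ from the middle resolvent; for odd $k$ the natural split is into lengths $k-1$ and $k+1$, giving the mixed term. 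The additive $\sqrt{N\eta/\rho}$ is the "trivial" bound: when $\psi_k^\av$ is not yet known to be small, one always has the a priori local-law input from~\cref{theorem G av local law} and~\cref{lemma Psi G prod}, and tracking the normalization in~\cref{a priori k} converts the trivial size-$\prod \|A_i\|$-or-$\braket{\abs{A_i}^2}^{1/2}$ bound into exactly this constant.

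The isotropic reduction~\eqref{eq:rediniso} follows the same philosophy but now the chain $\braket{\vx, G_1 A_1 \cdots G_{k+j} \vy}$ is split into an isotropic piece of length $k$ (contributing $\psi_k^\iso$) and an averaged piece of length $2j$ (contributing $(\psi_{2j}^\av)^{1/2}$, the square root appearing because the averaged quantity enters through a $\braket{\cG\Im G\cG^*}$ factor). The powers of $N\eta/\rho$ in~\eqref{eq:rediniso} are bookkeeping: a length-$k$ isotropic chain and a length-$2j$ averaged chain have different $N$-scalings in their normalizations (compare the $N^{(1-K)/2}$ in~\cref{a priori k iso} with $N^{(3-K)/2}$ in~\cref{a priori k}), and combining them via Cauchy--Schwarz redistributes these powers, yielding the $(N\eta/\rho)^{1/4}$ and $(\rho/(N\eta))^{1/4}$ factors. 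One must be careful that $k+j \le 2k$ so all chains that appear have length $\le 2k$, which is why~\cref{a priori k}--\cref{a priori k iso} are assumed up to $K = 2k$; and the restriction to even $k$ in~\eqref{eq:rediniso} mirrors the even-$k$ case of~\eqref{eq:redinav}, where the symmetric split is available.

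The main obstacle I anticipate is not the Cauchy--Schwarz splitting itself but getting the \emph{normalization factors exactly right} so that the reduction, when fed back into the master inequalities of~\cref{prop master}, actually closes the bootstrap: the whole scheme is, as the authors emphasize, very delicate because it depends on the precise interplay between the small parameters ($1/(N\eta)$, $1/\sqrt{N\eta\rho}$) and the potentially large constants ($\sqrt{N\eta/\rho}$) produced here. In particular one has to verify that after splitting, the Hilbert--Schmidt norms $\braket{\abs{A_i}^2}^{1/2}$ recombine correctly — the middle resolvent is flanked by two matrices, say $A_{k}$ and $A_{k+1}$, and inserting $\Im G$ there must not cost an extra operator norm; this is ensured because $\Im G$ is bounded and one keeps the $A$'s attached to their neighboring chains rather than estimating them separately. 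A secondary technical point is handling the case where the split point falls on a matrix $A_i$ rather than a resolvent, or where chains of the form $\braket{\cG \Im G \cG^*}$ need to be re-expressed as genuine chains covered by the a priori assumptions~\cref{a priori k}; this is routine using $\Im G = (G - G^*)/(2\ii)$ and the fact that~\cref{lemma Psi G prod} already extends the a priori bounds to products involving $\abs{G}$ and hence $\Im G$.
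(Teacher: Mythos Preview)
Your overall strategy --- reduce a long resolvent chain to shorter ones via a Schwarz-type inequality and then invoke the a~priori bounds through \cref{lemma Psi G prod} --- is exactly the paper's, and your identification of where the $\sqrt{N\eta/\rho}$ and the $(\psi_k^\av)^2$ contributions come from is correct. But the concrete mechanism you describe does not achieve the reduction.

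You propose to single out \emph{one} middle resolvent, say $G_{k+1}$ in $\braket{(GA)^{2k}}=\braket{\cG_{\mathrm{left}}\,G_{k+1}\,\cG_{\mathrm{right}}}$, and Cauchy--Schwarz against the positive weight $\Im G$ (or $|G|$). The problem is that after this step you get factors of the form $\braket{|G_{k+1}|\,\cG_{\mathrm{left}}^\ast\cG_{\mathrm{left}}}$, and since $\cG_{\mathrm{left}}$ already contains $k$ resolvents, this chain has length $\sim 2k$ again --- no reduction. One Ward identity $G^\ast G=\Im G/\eta$ collapses at most one adjacent $G^\ast G$ pair, so you cannot get down to length $k$ this way; your claim that ``this splits symmetrically into two copies of a length-$k$ chain'' is where the argument breaks.

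The paper's device (following \cite[Lemma~3.6]{2112.13693}) is to spectrally decompose at \emph{four} evenly spaced resolvents: for even $k$ write $(GA)^{2k}=GTGTGTGT$ with $T:=A(GA)^{k/2-1}$ and expand
\[
\braket{(GA)^{2k}}=\frac{1}{N}\sum_{i,j,m,l}\frac{\braket{\bm u_i,T\bm u_j}\braket{\bm u_j,T\bm u_m}\braket{\bm u_m,T\bm u_l}\braket{\bm u_l,T\bm u_i}}{(\lambda_i-z_1)(\lambda_j-z_{k/2+1})(\lambda_m-z_{k+1})(\lambda_l-z_{3k/2+1})}.
\]
Now the elementary bound $|T_{ij}T_{jm}T_{ml}T_{li}|\le\tfrac12(|T_{ij}|^2|T_{ml}|^2+|T_{jm}|^2|T_{li}|^2)$ \emph{decouples} the quadruple sum into a product of two double sums, each of which reassembles into $N\braket{|G|\,T\,|G|\,T^\ast}$. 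Since $T$ carries $k/2-1$ resolvents, this is a genuine length-$k$ averaged chain (with $|G|$ and $G^\ast$ in place of some $G$'s), and \cref{lemma Psi G prod} bounds it by $N^{k/2-1}+\rho^{1/2}\eta^{-1/2}N^{(k-3)/2}\psi_k^\av$. Squaring and tracking the normalisation yields exactly $\sqrt{N\eta/\rho}+\sqrt{\rho/(N\eta)}(\psi_k^\av)^2$. For odd $k$ the four-split is asymmetric, producing chains of lengths $k-1$ and $k+1$; for~\eqref{eq:rediniso} one decomposes at two resolvents in the ``averaged half'' of the chain and keeps the vectors $\vx,\vy$ attached to the isotropic half.
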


The rest of the present section is structured as follows: In~\cref{sec av} we prove~\cref{Psi master av} while in~\cref{sec iso} we prove~\cref{Psi master iso}. Then in~\cref{sec bootstrap} we prove~\cref{pro:redin} and conclude the proof of~\cref{theorem multi G local law}. Before starting the main proof we collect some trivial estimates between Hilbert-Schmidt and operator norms using matrix H\"older inequalities. 
\begin{lemma}\label{trace prod lemma}
    For $N\times N$ matrices \(B_1,\ldots,B_k\) and \(k\ge 2\) it holds that 
    \begin{equation}\label{Bnorms}
        \abs*{\braket*{\prod_{i=1}^k B_i}}\le \prod_{i=1}^k \braket{\abs{B_i}^k}^{1/k} \le N^{k/2-1}\prod_{i=1}^k\braket{\abs{B_i}^2}^{1/2},
    \end{equation}
    and
    \begin{equation}\label{Bnorm}
        \norm{B}=\sqrt{\lambda_{\max}(\abs{B}^2)} \le N^{1/2}\braket{\abs{B}^2}^{1/2}.
    \end{equation}
\end{lemma}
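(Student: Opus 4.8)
All three bounds are purely linear-algebraic and follow from matrix H\"older inequalities together with the monotonicity of \(\ell^p\)-norms; I sketch the argument. Throughout, write \(s_1(B)\ge\cdots\ge s_N(B)\ge 0\) for the singular values of an \(N\times N\) matrix \(B\), so that \(\braket{\abs{B}^p}=N^{-1}\sum_{a=1}^N s_a(B)^p\) for any \(p>0\) and \(\norm{B}=s_1(B)=\sqrt{\lambda_{\max}(\abs{B}^2)}\).

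For the first inequality in~\eqref{Bnorms} I apply the generalised matrix H\"older (von Neumann trace) inequality \(\abs{\Tr(B_1\cdots B_k)}\le\prod_{i=1}^k\bigl(\sum_a s_a(B_i)^{p_i}\bigr)^{1/p_i}\), valid whenever \(\sum_i p_i^{-1}=1\), with the balanced choice \(p_i=k\) for every \(i\). Dividing by \(N\) and rewriting \(\bigl(\sum_a s_a(B_i)^k\bigr)^{1/k}=N^{1/k}\braket{\abs{B_i}^k}^{1/k}\), the \(k\) factors \(N^{1/k}\) multiply to \(N\), which exactly cancels the \(1/N\) prefactor from the normalised trace, leaving \(\abs{\braket{\prod_i B_i}}\le\prod_i\braket{\abs{B_i}^k}^{1/k}\).

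For the second inequality in~\eqref{Bnorms} I use that, since \(k\ge 2\), the \(\ell^p\)-norm of the singular-value vector is non-increasing in \(p\), hence \(\bigl(\sum_a s_a(B_i)^k\bigr)^{1/k}\le\bigl(\sum_a s_a(B_i)^2\bigr)^{1/2}\); converting back to normalised traces this reads \(\braket{\abs{B_i}^k}^{1/k}\le N^{1/2-1/k}\braket{\abs{B_i}^2}^{1/2}\), and multiplying these \(k\) estimates together produces the claimed power \(N^{k(1/2-1/k)}=N^{k/2-1}\). Finally,~\eqref{Bnorm} is immediate from the trivial bound \(\lambda_{\max}(\abs{B}^2)\le\Tr\abs{B}^2=N\braket{\abs{B}^2}\), which holds because \(\abs{B}^2\) is positive semidefinite, upon taking square roots.

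There is no genuine obstacle here; the argument is standard. The only point that requires a moment's care is keeping track of the normalisation powers \(N^{\pm 1/k}\): \(\braket{\cdot}\) denotes the \emph{normalised} trace, whereas the matrix H\"older inequality and the \(\ell^p\)-monotonicity are most naturally stated for the unnormalised Schatten norms, so one must insert the conversion factors consistently.
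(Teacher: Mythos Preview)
Your proof is correct and matches the paper's approach: the paper does not actually give a proof of this lemma, merely stating it as ``trivial estimates between Hilbert-Schmidt and operator norms using matrix H\"older inequalities,'' which is precisely what you do. The normalisation bookkeeping you flag is the only thing to check, and you carry it out correctly.
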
    

In the sequel we often drop the indices from \(G,A\), hence we write $(GA)^k$ for $G_1A_1\ldots G_kA_k$, and assume without loss of generality that \(A_i=A_i^\ast\) and \(\braket{A_i^2}=1\). 
We also introduce the convention in this paper that matrices denoted by capital $A$ letter are always traceless.

\subsection{Proof of averaged estimate \eqref{Psi master av} in~\cref{prop master}}\label{sec av}

We now identify the leading contribution of \(\braket{(GA)^k-m^{k}A^k}\). For any matrix-valued function \(f(W)\) we define the \emph{second moment renormalisation}, denoted by underline, as 
\begin{equation}\label{def:underline}
    \un{Wf(W)} := W f(W) - \wt\E_\mathrm{GUE} \wt W (\partial_{\wt W}f)(W)
\end{equation}
in terms of the directional derivative \(\partial_{\wt W}\) in the direction of an independent GUE-matrix \(\wt W\). The motivation for the second moment renormalisation is that by Gaussian integration by parts it holds that \(\E W f(W)=\wt\E\wt W (\partial_{\wt W} f)(W)\) whenever \(W\) is a Gaussian random matrix of zero mean, and \(\wt W\) is an independent copy of \(W\). In particular it holds that \(\E\un{Wf(W)}=0\) whenever \(W\) is a GUE-matrix, while \(\E\un{Wf(W)}\) is small but non-zero for GOE or non-Gaussian matrices.  By concentration and universality we expect that to leading order \(Wf(W)\) may be approximated by \(\wt\E \wt W(\partial_{\wt W}f)(W)\). Here the directional derivative \(\partial_{\wt W}f\) should be understood as
\[(\partial_{\wt W}f)(W):=\lim_{\epsilon\to 0} \frac{f(W+\epsilon \wt W)-f(W)}{\epsilon}.\]

In our application the function \(f(W)\) always is a (product of) matrix resolvents \(G(z)=(W-z)^{-1}\) and possibly deterministic matrices \(A_i\). This time we view the resolvent as a function of \(W\), \(G(W)= (W-z)^{-1}\) for any fixed \(z\). By the resolvent identity it follows that  
\begin{equation}\label{resolvent der}
    (\partial_{\wt W}G)(W) = \lim_{\epsilon\to 0}\frac{(W+\epsilon\wt W-z)^{-1}-(W-z)^{-1}}{\epsilon} = -\lim_{\epsilon\to 0}(W+\epsilon\wt W-z)^{-1} \wt W(W-z)^{-1}= -G(W)\wt W G(W),
\end{equation}
while the expectation of a product of GUE-matrices acts as an averaged trace in the sense 
\[\wt \E_\mathrm{GUE}\wt W A \wt W = \frac{1}{N}\sum_{ab}\Delta^{ab}A\Delta^{ba} = \braket{A} I, \]
where \(I\) denotes the identity matrix and \((\Delta^{ab})_{cd}:=\delta_{ac}\delta_{bd}\). Therefore, for instance, we have the identities 
\[\un{WG}=WG+\braket{G}G, \quad \un{WGAG}=WGAG+\braket{G}GAG + \braket{GAG}G = \un{WG}AG+\braket{GAG}G.\]

Finally, we want to comment on the choice of renormalising with respect to an independent GUE rather than GOE matrix. In fact this is purely a matter of convenience and we could equally have chosen the GOE-renormalisation. Indeed, we have 
\begin{equation*}
    \wt\E_\mathrm{GOE} \wt W A \wt W = \braket{A}I + \frac{A^t}{N} 
\end{equation*}
and therefore, for instance, 
\[\un{WG}_\mathrm{GOE} = \un{WG}_\mathrm{GUE} + \frac{G^t G}{N}\] 
which is a negligible difference. Our formulas below will be slightly simpler with our choice in~\cref{def:underline} even though now $E\un{W f(W)}$ is not exactly zero
even for \(W \sim \mathrm{GOE}\).
\begin{lemma}\label{underline repl lemma}
    We have
    \begin{align}
        \label{Ek}
        \braket*{\prod_{i=1}^k (G_iA_i) -\prod_{i=1}^k m_i A_i}\Bigl(1+\landauOprec*{\frac{\psi^\av_0}{N\eta}}\Bigr)&=-m_1\braket{\un{WG_1A_1\cdots G_kA_k}} + \landauOprec*{\cE_k^\av},
    \end{align}
    where \(\cE_1^\av=0\) and     
    \begin{equation}\label{def:Ek}
        \begin{split}
            \cE_2^\av &:= \sqrt{\frac{\rho}{N\eta}}\Bigl(\psi^\av_1+\frac{\psi^\av_0}{\sqrt{N\eta\rho}}+
            \frac{1}{N\eta}\sqrt{\frac{\rho}{N\eta}} (\psi^\av_1)^2\Bigr),\\
            \cE_k^\av &:= N^{k/2-1}\sqrt{\frac{\rho}{N\eta}}\Bigl(\psi^\av_{k-1}  +\frac{1}{N\eta}\sum_{j=1}^{k-1}\psi^\av_{j}\Bigl(1+\psi^\av_{k-j}\sqrt{\frac{\rho}{N\eta}}\Bigr)\Bigr)
        \end{split} 
    \end{equation}
    for \(k\ge 3\). 
\end{lemma}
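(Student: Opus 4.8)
Here is the plan. The identity is the standard first step of the self-consistent-equation derivation: unfold the first resolvent via the Dyson equation, apply the second-moment renormalisation \eqref{def:underline}, and sort the resulting terms into the leading term $\prod_i m_iA_i$, the ``master'' term $-m_1\braket{\un{WG_1A_1\cdots G_kA_k}}$, and a remainder matched against $\cE_k^\av$ term by term using the a priori bounds \eqref{a priori 0}--\eqref{a priori k} together with the elementary estimates of \cref{trace prod lemma,lemma Psi G prod}.

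First I would unfold $G_1$. From $(W-z_1)G_1=I$ and the Dyson equation \eqref{MDE} (which gives $z_1=-m_1-m_1^{-1}$) one has the algebraic identity $G_1=m_1I-m_1WG_1-m_1^2G_1$, so substituting it into the first slot yields $\braket{(GA)^k}(1+m_1^2)=m_1\braket{A_1G_2A_2\cdots G_kA_k}-m_1\braket{WG_1A_1\cdots G_kA_k}$. Next, by definition of the underline, $WG_1A_1\cdots G_kA_k=\un{WG_1A_1\cdots G_kA_k}+\wt\E_{\mathrm{GOE}}\wt W\,(\partial_{\wt W}(G_1A_1\cdots G_kA_k))$; since $\partial_{\wt W}G_i=-G_i\wt WG_i$ and $\wt\E_{\mathrm{GOE}}\wt WB\wt W=\braket{B}I+N^{-1}B^{\top}$ (up to a negligible diagonal term), taking the normalised trace gives $\braket{WG_1A_1\cdots G_kA_k}=\braket{\un{WG_1A_1\cdots G_kA_k}}-\sum_{i=1}^k\braket{G_1A_1\cdots A_{i-1}G_i}\braket{G_iA_i\cdots G_kA_k}+N^{-1}(\cdots)$. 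The $i=1$ summand equals $\braket{G_1}\braket{(GA)^k}$; moved to the left-hand side it turns the prefactor into $1+m_1^2-m_1\braket{G_1}=1-m_1\braket{G_1-m_1}=1+\landauOprec{\psi_0^\av/(N\eta)}$ by the single-resolvent local law \eqref{a priori 0}, which is exactly the prefactor in the statement. The $N^{-1}$-transpose corrections reduce, after using $G^{\top}=G$ in the real symmetric case, to resolvent chains carrying one extra resolvent factor (hence $\lesssim\eta^{-1}$) of length up to $2k$, so the additional scalar $N^{-1}$ makes them lower order; those still proportional to $\braket{(GA)^k}$ are reabsorbed into the prefactor and the rest fit into $\landauOprec{\cE_k^\av}$ (using $N\eta\rho\ge N^\epsilon$, so that $(N\eta)^{-1}\le\sqrt{\rho/(N\eta)}$).

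The heart of the matter is the leading term $m_1\braket{A_1G_2A_2\cdots G_kA_k}$. By cyclicity this equals $m_1\braket{G_2A_2\cdots G_{k-1}A_{k-1}G_k(A_kA_1)}$, a chain of $k-1$ resolvents whose last observable $A_kA_1$ is generally not traceless. I would split $A_kA_1=\braket{A_kA_1}+\mathring{(A_kA_1)}$: the traceless part gives a genuine length-$(k-1)$ chain of traceless matrices, so by \eqref{a priori k} its deviation from the deterministic value is $\prec\psi_{k-1}^\av N^{(k-1)/2-1}\sqrt{\rho/(N\eta)}\braket{\abs{\mathring{(A_kA_1)}}^2}^{1/2}$, and since $\braket{\abs{\mathring{(A_kA_1)}}^2}^{1/2}\le\min\{\norm{A_1},\norm{A_k}\}\le N^{1/2}$ by \cref{trace prod lemma} this is $\prec\psi_{k-1}^\av N^{k/2-1}\sqrt{\rho/(N\eta)}$, the first term of $\cE_k^\av$; the scalar part $\braket{A_kA_1}$ (with $\abs{\braket{A_kA_1}}\le\braket{\abs{A_k}^2}^{1/2}\braket{\abs{A_1}^2}^{1/2}=1$) multiplies the shorter chain $\braket{G_2A_2\cdots A_{k-1}G_k}$, which after merging the cyclically adjacent $G_kG_2$ is a chain of $k-1$ resolvents in $k-2$ groups, controlled by \cref{lemma Psi G prod}(ii) and contributing $\prec\psi_{k-2}^\av N^{k/2-1}\sqrt{\rho/(N\eta)}(N\eta)^{-1}$, one of the $(N\eta)^{-1}\psi_j^\av$ terms of $\cE_k^\av$ (with the obvious simplification $\braket{G_2}=m_2+\landauOprec{(N\eta)^{-1}}$ when $k=2$). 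Recombining the deterministic parts, via $\braket{A_2\cdots A_{k-1}\mathring{(A_kA_1)}}+\braket{A_kA_1}\braket{A_2\cdots A_{k-1}}=\braket{A_1A_2\cdots A_k}$, leaves $m_1\cdots m_k\braket{A_1\cdots A_k}$, which is subtracted off to form $\braket{(GA)^k-m^kA^k}$. It remains to bound the off-diagonal local terms $m_1\sum_{i=2}^k\braket{G_1A_1\cdots A_{i-1}G_i}\braket{G_iA_i\cdots G_kA_k}$: the first factor, after merging $G_iG_1$, is a length-$(i-1)$-group chain ($\lesssim N^{(i-1)/2-1}\eta^{-1}(1+\psi_{i-1}^\av\sqrt{\rho/(N\eta)})$), the second a full length-$(k-i+1)$ chain ($\lesssim N^{(k-i+1)/2-1}(1+\psi_{k-i+1}^\av\sqrt{\rho/(N\eta)})$), and multiplying out and using $N\eta\rho\ge N^\epsilon$ gives precisely the $(N\eta)^{-1}\sum_j\psi_j^\av(1+\psi_{k-j}^\av\sqrt{\rho/(N\eta)})$ part of $\cE_k^\av$; the term $m_1\cdots m_k\braket{A_1\cdots A_k}\cdot m_1\braket{G_1-m_1}$ that appears along the way is $\lesssim N^{k/2-1}(N\eta)^{-1}\le N^{k/2-1}\sqrt{\rho/(N\eta)}\le\cE_k^\av$, again by $N\eta\rho\ge N^\epsilon$. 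When $k=1$ the leading chain degenerates to $m_1\braket{A_1}=0$ and every remaining term either vanishes (since $\braket{A_1}=0$) or sits inside the prefactor, so $\cE_1^\av=0$; running the $k=2$ bookkeeping separately produces the displayed $\cE_2^\av$.

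The step I expect to be the main obstacle is the merged-observable reduction: one must gain exactly the factor $\braket{\abs{\mathring{(A_kA_1)}}^2}^{1/2}\le N^{1/2}$ and no more when replacing $A_kA_1$ by a shorter chain (and analogously when merging resolvent groups), because estimating $A_kA_1$ by its operator norm $\norm{A_kA_1}$ — which can be of order $N$ for low-rank $A_i$ — would completely destroy the Hilbert–Schmidt improvement that \cref{theorem multi G local law} is designed to capture; the fact that the crude Hilbert–Schmidt/operator-norm interpolations of \cref{trace prod lemma} suffice here is what makes this work. Everything else — tracking the $N^{-1}$ corrections, the $\braket{G_1-m_1}$ factor, and fitting each shorter chain into the $\cE_k^\av$ budget — is routine power counting once the condition $N\eta\rho\ge N^\epsilon$ is invoked repeatedly to trade $(N\eta)^{-1}$ for $\sqrt{\rho/(N\eta)}$.
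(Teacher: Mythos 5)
Your overall architecture matches the paper's proof of Lemma~3.5: unfold the first resolvent via the Dyson equation, absorb the $\braket{G_1}$-term into the $1+\landauOprec{\psi_0^\av/(N\eta)}$ prefactor, extend the renormalisation from $\un{WG_1}$ to $\un{W(GA)^k}$ at the cost of the diagonal sum, split $A_kA_1=\braket{A_kA_1}+(A_kA_1)^\circ$, and feed everything through the a~priori bounds \eqref{a priori 0}--\eqref{a priori k} and \cref{lemma Psi G prod,trace prod lemma}. Your identification of the crucial point --- that one must bound the merged observable by $\braket{|(A_kA_1)^\circ|^2}^{1/2}\le\min\{\norm{A_1},\norm{A_k}\}\le N^{1/2}$ rather than by $\norm{A_kA_1}$ --- is exactly the paper's (3.24) and is the correct place to worry.

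Two bookkeeping imprecisions are worth flagging, although both are absorbed by $\cE_k^\av$ and so do not break the argument. First, in estimating $\braket{G_iG_1A_1\cdots A_{i-1}}$ you use the bound $\lesssim N^{(i-1)/2-1}\eta^{-1}(1+\psi_{i-1}^\av\sqrt{\rho/(N\eta)})$, whereas the deterministic leading term actually carries a divided difference $m[z_1,z_i]$ which obeys the sharper $|m[z_1,z_i]|\lesssim\rho/\eta$; the paper uses this $\rho$-improvement in \eqref{GG} and \eqref{leading est}. Your looser factor $\eta^{-1}$ gives a term of size $N^{k/2-1}/(N\eta)$, which does \emph{not} land in the $(N\eta)^{-1}\sum_j\psi_j^\av(\ldots)$ piece of $\cE_k^\av$ as you claim; it is only bounded by the $\psi_{k-1}^\av$-piece because $(N\eta)^{-1}\le\sqrt{\rho/(N\eta)}$ under the standing assumption $N\eta\rho\ge N^\epsilon$. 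Second, your ``recombination of deterministic parts'' is not literally an identity: the coefficient of $\braket{A_kA_1}\braket{A_2\cdots A_{k-1}}$ coming from the merged chain $\braket{G_kG_2A_2\cdots A_{k-1}}$ is $m[z_2,z_k]m_3\cdots m_{k-1}$, not $m_2\cdots m_k$, so the two deterministic pieces do not cancel exactly. The paper sidesteps this by \emph{not} recombining --- it keeps only the $(A_kA_1)^\circ$-chain's deterministic piece, rewrites $\braket{A_2\cdots A_{k-1}(A_kA_1)^\circ}=\braket{A_1\cdots A_k}-\braket{A_1A_k}\braket{A_2\cdots A_{k-1}}$, and simply estimates the second term by $N^{k/2-2}\lesssim N^{k/2-1}\sqrt{\rho/(N\eta)}$ alongside the entire $\braket{G_2\cdots G_k}\braket{A_kA_1}$ piece (including its deterministic leading term $\lesssim(\rho/\eta)N^{k/2-2}$). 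The mismatch you would introduce is of the same size and hence harmless, but it is a gap in the write-up rather than a valid cancellation.
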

\begin{proof}
    We start with the expansion
    \begin{equation}\label{selfcons}
        \begin{split}
            \Bigl(1+\landauOprec*{\frac{\psi^\av_0}{N\eta}}\Bigr)\braket{G_1A_1\cdots G_kA_k} &= m_1\braket{G_2\cdots G_kA_kA_1} - m_1\braket{\un{WG_1}A_1G_2\cdots G_kA_k}\\
            &= m_1\braket{G_2\cdots G_kA_kA_1} +  m_1\sum_{j=2}^{k} \braket{G_1\cdots G_j}\braket{G_j\cdots G_kA_k} \\
            &\quad - m_1\braket{\un{WG_1A_1G_2\cdots G_kA_k}},
        \end{split}
    \end{equation}
    due to
    \begin{equation}\label{G-m un}
        G=m-m\un{WG}+m\braket{G-m}G,
    \end{equation}
    where for \(k=1\) the first two terms in the right hand side of~\cref{selfcons} are not present. In the second step we extended the underline renormalization to the entire product $\un{WG_1A_1G_2\cdots G_kA_k}$
    at the expense of generating additional terms collected in the summation; this identity can directly be obtained
    from the definition~\eqref{def:underline}. Note that in the first line of \eqref{selfcons} we moved the term coming 
    from $m_1\braket{G_1-m_1}G_1$ of \eqref{G-m un} to the left hand side causing the error $\mathcal{O}_\prec(\psi_0^\av/(N\eta))$.
    For \(k\ge 2\), using~\cref{lemma Psi G prod,trace prod lemma} we estimate the second term in the second line of \eqref{selfcons} by
    \begin{equation}\label{GG}
        \begin{split}
           & \abs{\braket{G_1\ldots G_j}\braket{G_j\ldots G_kA_k}} \\
            &\qquad\quad\prec \Bigl(\frac{\rho}{\eta}\abs{\braket{A_1\cdots A_{j-1}}} + \frac{\psi^\av_{j-1}\rho^{1/2} N^{j/2-2}}{\eta^{3/2}}\Bigr)\Bigl(\abs{\braket{A_{j}\cdots A_k}}+\frac{\psi^\av_{k-j+1}\rho^{1/2} N^{(k-j)/2-1}}{\eta^{1/2}}\Bigr)\\
            &\qquad\quad\lesssim \frac{N^{k/2-1}\rho}{N\eta}\Bigl(1 + \frac{\psi^\av_{j-1}}{\sqrt{N\eta\rho}}\Bigr)
            \Bigl(1+\psi^\av_{k-j+1} \sqrt{ \frac{\rho }{N\eta}}\Bigr).
        \end{split}
    \end{equation}
    For the first term in the second line of~\cref{selfcons} we distinguish the cases \(k=2\) and \(k\ge 3\). In the former we write 
    \begin{equation}\label{GAA}
        m_1\braket{G_2A_2A_1}=m_1\braket{G_2}\braket{A_2A_1} + m_1\braket{G_2(A_2A_1)^\circ}=m_1\braket{A_1A_2}\Bigl(m_2+\landauOprec*{\frac{\psi^\av_0}{N\eta}}\Bigr) + \landauOprec*{\frac{\psi^\av_1\rho^{1/2}}{(N\eta)^{1/2}}},
    \end{equation}
    where we used~\cref{trace prod lemma} to estimate 
    \begin{equation}
        \braket{\abs{(A_2A_1)^\circ}^2}^{1/2}=\Bigl(\braket{\abs{A_2A_1}^2}-\abs{\braket{A_2A_1}}^2\Bigr)^{1/2}\le N^{1/2}
    \end{equation}
    
    In case  \(k\ge 3\)  we estimate 
    \begin{equation}\label{2k}
        \begin{split}
            m_1\braket{G_2\cdots G_kA_kA_1} &=  m_1 \braket{G_2\cdots G_k(A_kA_1)^\circ} +
            m_1 \braket{G_2\cdots G_k}\braket{A_kA_1} \\
            &= m_1\cdots m_k \braket{A_2\cdots A_{k-1}(A_kA_1)^\circ} \\
            &\quad+ \landauOprec*{N^{k/2-1}\sqrt{\frac{\rho}{N\eta}}\Bigl[\psi^\av_{k-1}+\sqrt{\frac{\rho}{N\eta}}\Bigl(1+\frac{\psi^\av_{k-2}}{\sqrt{N\eta\rho}}\Bigr)\Bigr]}.
        \end{split}
    \end{equation}
    Note that the leading deterministic term 
    of \(\braket{G_2\cdots G_k}\) was  simply estimated  as
    \begin{equation}\label{leading est}
        \abs*{ m[z_2, z_k] m_3\cdots m_{k-1} \braket*{\prod_{i=2}^{k-1} A_i }  } \lesssim \frac{\rho}{\eta} N^{k/2-2}.
    \end{equation}
    From~\eqref{2k} we write
    \(\braket{A_2\cdots A_{k-1}(A_kA_1)^\circ}=\braket{A_1\cdots A_{k}}-\braket{A_1A_k}\braket{A_2\cdots A_{k-1}}\) where
    the second  term can simply be estimated 
    as \(\abs{\braket{A_1A_k}\braket{A_2\cdots A_{k-1}}}\le N^{k/2-2}\), due to~\cref{trace prod lemma},
    and included in the error term. Collecting all other error terms from~\eqref{GG} and \eqref{2k} 
    and recalling $\psi_j^{\av/\iso}\ge 1\gtrsim \sqrt{\rho/(N\eta)}$ for all $j$, we obtain~\eqref{Ek} with the definition of $\cE_k$ from~\eqref{def:Ek}.  
\end{proof}

Lemma~\ref{underline repl lemma} reduces understanding the local law to the underline term  in~\eqref{selfcons} since  $\cE_k^\av$  will  be treated as  an error term.
For the underline term we use a cumulant expansion when calculating the high moment $\E \abs{ \braket{(GA)^k-m^k A^k}}^p$ for any fixed integer $p$. Here we will make again a notational simplification ignoring different indices
in $G$, $A$ and $m$, and, in particular we may write 
\begin{equation}\label{GAM}
    \abs*{\braket*{\prod_{i=1}^k (G_iA_i)- \prod_{i=1}^k m_i A_i}}^p= \braket{(GA)^k-m^kA^k}^p
\end{equation}
by choosing \(G=G(\ov{z_i})\) for half of the factors.

We set $\partial_{ab}:= \partial/\partial w_{ab}$ as the derivative with respect to the \((a,b)\)-entry of $W$, i.e. 
we consider $w_{ab}$ and $w_{ba}$
as independent variables  in the following cumulant expansion (such expansion 
was first used in the random matrix context in~\cite{MR1411619} and later
 revived in~\cite{MR3678478, MR3405746})
 \[ \E w_{ab} f(W) = \sum_{j=1}^\infty \frac{1}{j!N^{(j+1)/2}} \begin{cases} \kappa^\mathrm{od}_{j+1}\E (\partial_{ab}+\partial_{ba})^{j} f(W),&a\ne b, \\ \kappa^\mathrm{d}_{j+1} \E\partial_{aa}^j f(W),&a=b. \end{cases}  \] 
 Technically we use a truncated version of the expansion above, see e.g.~\cite{MR3941370,MR3678478,MR3941370}.
We thus compute\footnote{The truncation error of the cumulant expansion after \(R=(3+4k)p\) terms can be estimated trivially by the single-\(G\) local law for resolvent entries, and by norm for entries of \(GAG\cdots\) resolvent chains.}
\begin{equation}\label{WGA cum exp}
    \begin{split}
        &\E \braket{\un{W(GA)^k}} \braket{(GA)^k-m^k A^k}^{p-1} \\
        &= \frac{1}{N}\E \sum_{ab} \frac{[(GA)^k]_{ba}}{N} (\partial_{ab}+\partial_{ba}) \braket{(GA)^k-m^k A^k}^{p-1} + \E \sum_{ab} \frac{\partial_{ab} [((GA)^k)_{ba}]}{N^2} \braket{(GA)^k-m^k A^k}^{p-1} \\
        &\quad + \sum_{j=2}^R\frac{\kappa^\mathrm{d}_{j+1}}{j!N^{(j+3)/2}} \E \sum_a  \partial_{aa}^j \Bigl([(GA)^k]_{aa} \braket{(GA)^k-m^k A^k}^{p-1}\Bigr) \\
        &\quad + \sum_{j=2}^R\frac{\kappa^\mathrm{od}_{j+1}}{j!N^{(j+3)/2}}\E \sum_{a\ne b} (\partial_{ab}+\partial_{ba})^j \Bigl([(GA)^k]_{ba} \braket{(GA)^k-m^k A^k}^{p-1}\Bigr) + \landauOprec{\bigl(N^{k/2-3/2}\bigr)^p}
    \end{split}
\end{equation}
recalling~\cref{Wigner def} for the diagonal and off-diagonal cumulants. The  summation runs over all indices $a,b\in [N]$.
The second cumulant calculation in~\eqref{WGA cum exp} used the fact that by definition of the underline renormalisation the \(\partial_{ba}\)-derivative in the first line may not act on its own \((GA)^k\). 

For the first term of~\cref{WGA cum exp} we use \(\partial_{ab}\braket{(GA)^k}=-kN^{-1}((GA)^kG)_{ba}\) due to~\cref{resolvent der} with \(\wt W=\Delta^{ab}\) so that using \(G^t=G\) we can perform the summation and obtain 
\begin{equation}\label{firstline}
    \begin{split}
        &\abs*{\frac{1}{N}\sum_{ab} \frac{[(GA)^k]_{ba}}{N} (\partial_{ab}+\partial_{ba}) \braket{(GA)^k-m^k A^k}^{p-1}} \\
        &\qquad\lesssim \abs*{\frac{\braket{(GA)^{2k} G}}{N^2}} \abs{\braket{(GA)^k -m^kA^k}}^{p-2}\prec \Bigl(N^{k/2-1}\sqrt{\frac{\rho}{N\eta}}\Bigr)^2\Bigl(1+\frac{\psi^\av_{2k}}{\sqrt{N\eta\rho}}\Bigr)\abs{\braket{(GA)^k -m^kA^k}}^{p-2}
    \end{split}
\end{equation}
from~\cref{lemma Psi G prod} with estimating the deterministic leading term
of $ \braket{(GA)^{2k} G}$ by $\abs{ m^{(2)} m^{2k-1} \braket{A^{2k}}}\le N^{k-1}\rho/\eta$
as in~\eqref{leading est}. The first prefactor in the right hand side of~\eqref{firstline} is already written
as the square of the target size $N^{k/2-1}\sqrt{\rho/(N\eta)}$ for $\braket{(GA)^k -m^kA^k}$,
see~\eqref{loc1}.

For the second term of~\cref{WGA cum exp} we estimate 
\[
    \begin{split}
        \sum_{ab} \frac{\partial_{ab} [((GA)^k)_{ba}]}{N^2} &= - \frac{1}{N^2}\sum_{j=0}^{k-1} \sum_{ab} ((GA)^jG)_{ba} ((GA)^{k-j})_{ba} = - \frac{1}{N}\sum_{j=0}^{k-1}  \braket{(GA)^jG (A^t G)^{k-j}}\\
        &= \landauOprec*{N^{k/2-1}\sqrt{\frac{\rho}{N\eta}} \Bigl(\sqrt{\frac{\rho}{N\eta}}+\frac{\sqrt{\rho}\psi^\av_{k}}{N\eta}\Bigr) },
    \end{split}
\]
recalling that \(G=G^t\) since \(W\) is real symmetric\footnote{We recall that we present the proof for the slightly more involved real symmetric case. In the complex Hermitian case the second term on the right hand side of~\cref{WGA cum exp} would not be present.}.

For the second line of~\cref{WGA cum exp} we define the set of multi-indices $\bm l = (l_1, l_2, \ldots, l_n)$
with arbitrary length $n$, denoted by \(\abs{\bm l}:=n\), and total size $k=\sum_i l_i$ as 
\begin{equation}
    \cI_k^\mathrm{d} := \set*{\bm l \in \N_0^{n}\given n\le R,\sum_i l_i=k}, \quad R:= (3+4k)p. 
\end{equation}
Note that the set  $\cI_k^\mathrm{d}$ is a finite set with cardinality depending only on $k,p$.
We distribute the derivatives according to the product rule to estimate 
\begin{equation}\label{Xid}
    \begin{split}
        &\abs*{\sum_{j\ge 2}\frac{\kappa^\mathrm{d}_{j+1}}{j!N^{(j+3)/2}}\sum_a  \partial_{aa}^j \Bigl([(GA)^k]_{aa} \braket{(GA)^k-m^k A^k}^{p-1}\Bigr)} \\
        &\qquad\qquad\qquad\quad \le \sum_{\substack{\bm l\in \cI_k^\mathrm{d}, J\subset\cI_k^\mathrm{d}\\l_{\abs{\bm l}}\ge1, \abs{\bm l}+\sum J\ge 3}} \Xi_k^\mathrm{d}(\bm l,J) \abs{\braket{(GA)^k-m^k A^k}}^{p-1-\abs{J}},
    \end{split}
\end{equation}
where for the multi-set \(J\) we define \(\sum J:=\sum_{\bm j\in J}\abs{\bm j}\) and set
\begin{equation}\label{Iod}
    \Xi_k^\mathrm{d} := \frac{N^{-\frac{\abs{\bm l}+\sum J}{2}}}{N^{1+\abs{J}}} \abs*{\sum_{a} [(GA)^{l_1}G]_{aa}\cdots [(GA)^{l_{\abs{\vl}-1}}G]_{aa} [(GA)^{l_{\abs{\vl}}}]_{aa} \prod_{\bm j\in J}[(GA)^{j_1}G]_{aa}\cdots [(GA)^{j_{\abs{\bm j}}}G]_{aa}}.
\end{equation}
Here for the multi-set $J\subset \cI_k^\mathrm{d}$ we defined its cardinality by $|J|$ and we 
set \(\sum J:=\sum_{\bm j\in J}\abs{\bm j}\).  
Along the product rule, the multi-index $\bm l$ encodes how the first factor  $([(GA)^k]_{aa}$  in~\eqref{Xid} is differentiated, while
each element $\bm j\in J$  is a multi-index that encodes how another factor $\braket{(GA)^k-m^k A^k}$ is differentiated.
Note that $|J|$ is the number of such factors affected by derivatives, the remaining $p-1-|J|$ factors are untouched.

For the third line of~\cref{WGA cum exp} we similarly define the appropriate index set that is needed to
encode the product rule\footnote{In the definition of $\cI_k^\mathrm{od}$ the indices \(ab,ba,aa,bb\) should be understood symbolically, merely indicating the diagonal or off-diagonal character of the term. However, in the formula~\cref{Xi od def} below the concrete summation indices \(a,b\) are substituted for the symbolic expressions. Alternatively, we could have avoided this slight abuse of notation by defining \(\alpha_i\in\set{(1,1),(1,2),(2,1),(2,2)}\), sum over \(a_1,a_2=1,\ldots,N\) in~\cref{Xi od def} and substitute \(a_{(\alpha_i)_1},a_{(\alpha_i)_2}\) for \(\alpha_i\), however this would be an excessive  pedantry.}
\begin{equation}\label{I od def}
    \cI_k^\mathrm{od} := \set*{(\bm l,\bm\alpha) \in \N_0^{\abs{\bm l}}\times \set{ab,ba,aa,bb}^{\abs{\vl}} \given \abs{\bm l}\le R,\sum_i l_i=k, \abs{\set{i\given\alpha_i=aa}}=\abs{\set{i\given\alpha_i=bb}}}.
\end{equation}
Note that in addition to the multi-index $\bm l$ encoding the distribution of the derivatives after the Leibniz rule similarly to the previous diagonal case, the second element $\bm\alpha$ of the new type of indices also keeps track of whether  after the differentiations the corresponding factor is evaluated at $ab, ba, aa$ or $bb$. While a single $\partial_{ab}$ or $\partial_{ba}$ acting on $\braket{(GA)^k-m^k A^k}$ results in an off-diagonal term of the  form $[(GA)^kG]_{ab}$ or $[(GA)^kG]_{ba}$, a second derivative also produces diagonal terms. The derivative action on the first factor $[(GA)^k]_{ba} $ in the third line of~\cref{WGA cum exp} produces diagonal factors already after one derivative. The restriction in~\eqref{Iod} that the number 
of $aa$ and $bb$-type diagonal elements must coincide comes from a simple  counting of diagonal  indices along derivatives: when an additional $\partial_{ab}$ hits an off-diagonal term, then 
either one $aa$ and one $bb$ diagonal is created or none. Similarly, when an additional \(\partial_{ab}\) hits a diagonal \(aa\) term, then one diagonal \(aa\) remains, along with a new off-diagonal \(ab\). In any case the difference of the $aa$ and $bb$ diagonals is unchanged.

Armed with this notation, similarly to~\eqref{Xid} we estimate 
\begin{equation}\label{Xiod}
    \begin{split}
        &\abs*{\sum_{j\ge 2}\frac{\kappa^\mathrm{od}_{j+1}}{j!N^{(j+3)/2}}\sum_{a,b} (\partial_{ab}+\partial_{ba})^j \Bigl([(GA)^k]_{ba} \braket{(GA)^k-m^k A^k}^{p-1}\Bigr)} \\
        &\quad \le \sum_{\substack{(\bm l,\bm\alpha)\in \cI_k^\mathrm{od}, J\subset\cI_k^\mathrm{od}\\ l_{\abs{\bm l}}\ge1,\abs{\bm l}+\sum J\ge 3}} \Xi_k^\mathrm{od}((\bm l,\bm\alpha),J) \abs{\braket{(GA)^k-m^k A^k}}^{p-1-\abs{J}},
    \end{split}
\end{equation}
where for the multi-set \(J\subset\cI_k^\mathrm{od}\) we define \(\sum J:=\sum_{(\bm j,\bm\beta)\in J}\abs{\bm j}\) and set
\begin{equation}\label{Xi od def}
    \Xi_k^\mathrm{od} := \frac{N^{-\frac{\abs{\bm l}+\sum J}{2}}}{N^{1+\abs{J}}} \abs*{\sum_{ab} [(GA)^{l_1}G]_{\alpha_1}\cdots [(GA)^{l_{\abs{\vl}}}]_{\alpha_{\abs{\vl}}} \prod_{(\bm j,\bm\beta)\in J}[(GA)^{j_1}G]_{\beta_1}\cdots [(GA)^{j_{\abs{\bm j}}}G]_{\beta_{\abs{\vj}}}}.
\end{equation}
Note that~\eqref{Xiod} is an overestimate: not all  indices $(\bm j,\bm\beta)$ indicated in~\eqref{Xi od def} can actually occur
after the Leibniz rule. 

\begin{lemma}\label{Xi lemma}
    For any \(k\ge 1\) it holds that 
    \begin{equation}\label{Xi k claim}
        \Xi_k^\mathrm{d} + \Xi_k^\mathrm{od} \prec \Biggl(N^{k/2-1}\sqrt{\frac{\rho}{N\eta}}\Bigl(\Phi_k+(\psi_k^\iso)^{2/3}\Phi_{k-1}^{1/3} + \sum_{j=1}^{k-1}\sqrt{(\Phi_{k-1}+\psi_{k}^\iso)\psi_{j}^\iso \Omega_{k-j}}\Bigr)\Biggr)^{1+\abs{J}}
    \end{equation}
\end{lemma}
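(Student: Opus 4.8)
The plan is to prove~\eqref{Xi k claim} by inserting into the definitions~\eqref{Iod} of $\Xi_k^{\mathrm{d}}$ and~\eqref{Xi od def} of $\Xi_k^{\mathrm{od}}$ the decomposition of every resolvent--chain matrix element into its deterministic approximation plus a fluctuation controlled by the a priori isotropic parameters. Concretely, for a factor $[(GA)^l G]_{xy}$ with $x,y\in\set{\bm e_a,\bm e_b}$, \cref{lemma Psi G prod}(ii) together with~\eqref{a priori k iso} (and~\cref{theorem G av local law} for $l=0$) yields
\[
  [(GA)^l G]_{xy}=m_1\cdots m_{l+1}(A_1\cdots A_l)_{xy}+\landauOprec*{\psi_l^\iso\, N^{l/2}\sqrt{\frac{\rho}{N\eta}}},
\]
and similarly, with the obvious variant for a chain ending in $A$, for the $G$-less last factor of the $\bm l$-string. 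Substituting this and multiplying out, $\Xi_k^{\mathrm{d}}$ and $\Xi_k^{\mathrm{od}}$ become finite sums --- of cardinality depending only on $k$ and $p$ --- indexed by labelling each of the $q:=\abs{\bm l}+\sum J$ factors ``deterministic'' or ``fluctuating'', and it then suffices to bound each such contribution by the right-hand side of~\eqref{Xi k claim}. The only ``large'' input that will be used is the matrix H\"older bound $\norm{A_{i_1}\cdots A_{i_r}}\le N^{r/2}\prod_j\braket{\abs{A_{i_j}}^2}^{1/2}$ from~\eqref{Bnorm}, while Hilbert--Schmidt norms are evaluated exactly via~\cref{trace prod lemma}, $\norm{A_{i_1}\cdots A_{i_r}}_{\hs}^2=N\braket{A_{i_1}\cdots A_{i_r}A_{i_r}\cdots A_{i_1}}\le N^{r}$.

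\emph{Deterministic contribution.} When every factor is replaced by its deterministic approximation one is reduced to estimating $\sum_a\prod_i(A\cdots A)_{aa}$ in the diagonal case and $\sum_{ab}\prod_i(A\cdots A)_{\alpha_i}$ in the off-diagonal case, carrying a total of $k(1+\abs{J})$ matrices $A_i$. In the off-diagonal case one first uses the balance condition in $\cI_k^{\mathrm{od}}$, that the numbers of $aa$- and $bb$-type factors agree, to contract the index $b$ completely via repeated applications of $\sum_b M_{ab}N_{ba}=(MN)_{aa}$ (modulo Hadamard products of deterministic $A$-matrices, bounded in operator norm), thereby reducing to a single-index sum of the same shape as the diagonal case. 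That sum is estimated by taking the operator norm for all but two of the factors and Cauchy--Schwarz together with the Hilbert--Schmidt identity for the remaining two, which produces exactly $N^{k(1+\abs{J})/2}$ from the $k(1+\abs{J})$ copies of the $A_i$ with no loss; combining with the prefactor $N^{-q/2-1-\abs{J}}$ and the $\lesssim 1$ powers of $m$, and using the slack $N^{-q/2}$ afforded by $q\ge 3+\abs{J}$ to absorb the factor $(\rho/N\eta)^{(1+\abs{J})/2}$ in the target, bounds this contribution by $\bigl(N^{k/2-1}\sqrt{\rho/N\eta}\bigr)^{1+\abs{J}}$. Keeping track of all inequivalent ways of contracting the chains reproduces exactly the combinatorial factor $\Phi_k$ with its recursive form~\eqref{Phi k def}.

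\emph{Mixed contributions.} For terms with fluctuating factors the requisite gain comes from grouping these factors into closed chains rather than estimating each on its own. Every fluctuating off-diagonal factor is paired with a neighbour --- off-diagonal, or diagonal via the $aa$/$bb$ balance --- and the pair is closed, either directly, $\sum_{ab}[M]_{ab}[N]_{ba}=\Tr(MN)=N\braket{MN}$, or, after Cauchy--Schwarz, via $\sum_a\abs{[M]_{ab}}^2\le\norm{M}_{\hs}^2=N\braket{MM^\ast}=\eta^{-1}N\braket{(GA)^l\Im G\,(\overline{G}\,A)^l}$, into an averaged resolvent chain estimated by~\cref{lemma Psi G prod}(ii) and the parameters $\psi^{\av}$; this is the source both of the $\psi^{\av}$-dependent terms and of the groups $\sqrt{(\Phi_{k-1}+\psi_k^\iso)\,\psi_j^\iso\,\Phi_{k-j}}$ in the $j$-sum of~\eqref{Xi k claim} (here $\Omega_{k-j}$ is to be read as $\Phi_{k-j}$, modulo the $1/N\eta$-corrections with which it re-enters~\eqref{Psi master av}). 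The two ``Hilbert--Schmidt-closed'' slots and the arbitrarily many ``directly estimated'' slots in $\Phi_k$ reflect precisely that a Cauchy--Schwarz closing always produces exactly two Hilbert--Schmidt factors. A fluctuating diagonal $l=0$ factor is cheap, $\landauOprec*{\sqrt{\rho/N\eta}}$, and is extracted in $\ell^\infty$; an isolated long diagonal factor $[(GA)^lG]_{aa}$ with $l\ge 1$ that no pairing can absorb is estimated by interpolating between its direct isotropic bound $\psi_k^\iso N^{k/2}\sqrt{\rho/N\eta}$ and the smaller quantity obtained by detaching one $A$ and re-closing the remainder, and optimising the resulting free parameter by Young's inequality yields the characteristic term $(\psi_k^\iso)^{2/3}\Phi_{k-1}^{1/3}$.

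\emph{Main obstacle.} The delicate point is the bookkeeping behind the sharp exponent $1+\abs{J}$: after apportioning the prefactor $N^{-q/2-1-\abs{J}}$ between the $\bm l$-block and the $\abs{J}$ many $\bm j$-blocks, \emph{each} of these $1+\abs{J}$ blocks must be shown to contribute no more than one factor of the single-chain target $N^{k/2-1}\sqrt{\rho/N\eta}\,(\Phi_k+\cdots)$, with no power of $N$, $\eta$ or $\rho$ wasted; in the off-diagonal case this is exactly where the $aa$/$bb$ count must be exploited, to guarantee that enough off-diagonal factors survive to be closed into \emph{averaged} chains rather than bounded crudely. A secondary difficulty is controlling the proliferation of fractional-power terms across the various sub-cases so that they organise precisely into $\Phi_k$, $(\psi_k^\iso)^{2/3}\Phi_{k-1}^{1/3}$ and the $j$-sum, and not into a strictly larger expression, which forces each Young-type optimisation to be performed with the correct split.
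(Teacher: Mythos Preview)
Your proposal is too schematic to constitute a proof, and several of its claimed mechanisms are incorrect.

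\textbf{The power counting does not close.} You assert that ``the slack $N^{-q/2}$ afforded by $q\ge 3+\abs{J}$'' absorbs the target factor $(\rho/N\eta)^{(1+\abs{J})/2}$. But $q=\abs{\bm l}+\sum J\ge 3+\abs{J}$ is \emph{not} a constraint: the only a priori restrictions are $q\ge 3$ and $q\ge 1+\abs{J}$. In the off-diagonal case the naive estimate (which is essentially what your ``multiply out'' strategy produces) falls short by a factor $N^{2-(q-\abs{J})/2}$, so the cases $q-\abs{J}\in\set{1,2,3}$ all require genuinely new input. The paper handles these by a careful case distinction (cases \textbf{D1}, \textbf{O1}--\textbf{O3}) using two ingredients you never invoke: the Ward-type Schwarz bounds $N^{-1}\sum_a\abs{[(GA)^kG]_{ab}}\le N^{-1/2}\sqrt{[(G^\ast A)^kG^\ast G(AG)^k]_{bb}}$, and---crucially for the Hilbert--Schmidt improvement---the identity $\sum_a\norm{A\bm e_a}^2=N\braket{\abs{A}^2}$, which is what converts the naive $\norm{A}$-dependence into a $\braket{\abs{A}^2}^{1/2}$-dependence after summation. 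Your ``closing into averaged chains'' idea gestures at the first but misses the second entirely.

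\textbf{The origin of the structural terms is misidentified.} You claim the purely deterministic contribution ``reproduces exactly the combinatorial factor $\Phi_k$'', but $\Phi_k$ is built from the isotropic fluctuation parameters $\psi^\iso_{2k_i}$ and cannot arise from deterministic $A$-products alone; in the paper $\Phi_k$ enters through the Ward-improved bounds on single off-diagonal factors. You also say $(\psi_k^\iso)^{2/3}\Phi_{k-1}^{1/3}$ comes from ``interpolating'' a lone diagonal factor; in fact it appears in the diagonal case \textbf{D1} from a Young inequality on the product $\Phi_k^{\abs{J}-1}\Phi_{k-1}\psi_k^\iso$ after estimating one $(GA)^kG$ factor by $N^{k/2-1/2}(\norm{A\bm e_a}+\psi_k^\iso\sqrt{\rho/\eta})$ and one $(GA)^k$ factor by $N^{k/2-1/2}\norm{A\bm e_a}\Omega_{k-1}$, then summing $\sum_a\norm{A\bm e_a}^2\le N$. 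Finally, you refer to ``$\psi^\av$-dependent terms'' produced by closing pairs into traces, but the statement of the lemma contains no $\psi^\av$; those enter the master inequality~\eqref{Psi master av} only through $\cE_k^\av$ and the Gaussian term~\eqref{firstline}, not through $\Xi_k^{\mathrm{d/od}}$.

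In short, the decomposition-and-expand strategy does not by itself produce the required gains in the borderline cases; what is missing is precisely the case analysis with Ward improvements and the $\sum_a\norm{A\bm e_a}$ summation that the paper carries out.
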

By combining~\cref{underline repl lemma} and~\cref{WGA cum exp,firstline,Xid,Xiod} with~\cref{Xi lemma} 
and using a simple H\"older inequality, we obtain, for any fixed $\xi>0$,
that  
\begin{equation}\label{finb}
    \begin{split}
        &\Bigl(\E \abs{\braket{(GA)^k-m^k A^k}}^p\Bigr)^{1/p} \\
        &\lesssim N^\xi N^{k/2-1}\sqrt{\frac{\rho}{N\eta}} \biggl(\Phi_k + \Bigl(\frac{\psi_{2k}^\av}{\sqrt{N\eta\rho}}\Bigr)^{1/2}+\psi^\av_{k-1}+\frac{\psi^\av_k}{\sqrt{N\eta}}+(\psi_k^\iso)^{2/3}\Phi_{k-1}^{1/3} \\
        &\qquad\qquad\qquad\qquad\qquad+ \sum_{j=1}^{k-1}\sqrt{(\Phi_{k-1}+\psi_{k}^\iso)\psi_{j}^\iso \Omega_{k-j}}+\frac{1}{N\eta}\sum_{j=1}^{k-1}\psi^\av_{j}\Bigl(1+\psi^\av_{k-j}\sqrt{\frac{\rho}{N\eta}} \Bigr)\biggr),
    \end{split}
\end{equation}
where we used the \(\Xi_k^\mathrm{d}\) term to add back the \(a=b\) part of the summation in~\cref{Xiod} compared to~\cref{WGA cum exp}. By taking \(p\) large enough, \(\xi\) arbitrarily small, 
and using the definition of $\prec$  and the fact that the bound~\eqref{finb} holds uniformly in the spectral parameters and the deterministic matrices, we conclude the proof of~\cref{Psi master av}. 

\begin{proof}[Proof of~\cref{Xi lemma}]
    The proof repeatedly uses~\cref{a priori k iso} in the form
    \begin{align}\label{eq naive iso GAG} 
        ((GA)^kG)_{ab}&\prec N^{k/2-1/2}\Bigl(\norm{A\bm e_a}\wedge\norm{A\bm e_b}+\psi_k^\iso\sqrt{\frac{\rho}{\eta}}\Bigr)\lesssim N^{k/2}\Bigl(1+\psi_k^\iso\sqrt{\frac{\rho}{N\eta}}\Bigr),
        \\ \label{eq naive iso GA}  
        ((GA)^k)_{ab}&\prec N^{k/2-1/2}\norm{A\bm e_b}\Bigl(1+\psi_{k-1}^\iso\sqrt{\frac{\rho}{N\eta}}\Bigr)\lesssim N^{k/2}\Bigl(1+\psi_{k-1}^\iso\sqrt{\frac{\rho}{N\eta}}\Bigr)
    \end{align}
    with $\bm e_b$ being the $b$-th coordinate vector, where we
    estimated the deterministic leading term  $m^k(A^k)_{ab}$ by
    \(\abs{(A^k)_{ab}} \le \norm{A}^{k-1} \norm{A\bm e_b} \le N^{(k-1)/2} \norm{A\bm e_b}\) using~\eqref{Bnorm}. Recalling the normalization $\braket{\abs{A}^2}=1$,  the best available bound on \(\norm{A\bm e_b}\) is \(\norm{A\bm e_b}\le N^{1/2}\), however this can be substantially improved under a summation  over the index $b$: 
    \begin{equation}\label{eq A sum}
        \sum_b \norm{A\bm e_b}^2 = N \braket{\abs{A}^2} \le N, \quad \sum_{b}\norm{A\bm e_b}\le 
        \sqrt{N}\sqrt{\sum_b \norm{A\bm e_b}^2} \le N.
    \end{equation}

    Using~\cref{eq naive iso GAG,eq naive iso GA} for each entry of~\eqref{Iod} and~\eqref{Xi od def}, 
    we obtain the following \emph{naive (or a priori) estimates} on \(\Xi_k^{\mathrm{d/od}}\)   
    \begin{equation}\label{Xi k naive}
        \Xi_k^{\mathrm{d/od}} \prec \bigl(N^{k/2-1}\frac{\Omega_k}{\sqrt{N}} \bigr)^{1+\abs{J}} 
        N^{1+\bm1(\mathrm{od})+(\abs{J}-\abs{\vl}-\sum J)/2}
    \end{equation}
    where we recall the definition of $\Omega_k$ from~\eqref{Phi k def}.
    Using \(\Omega_k/\sqrt{N}\lesssim \Phi_k \sqrt{\rho/(N\eta)}\) due to \(1\lesssim \rho/\eta\) the claim~\eqref{Xi k claim} 
    follows trivially from~\eqref{Xi k naive}  for \(\Xi_k^\mathrm{d}\) and \(\Xi_k^\mathrm{od}\)
    whenever \(\abs{\vl}+\sum J\ge 2+\abs{J}\) or \(\abs{\vl}+\sum J\ge 4+\abs{J}\), respectively, 
    i.e.\ when the exponent of $N$ in~\eqref{Xi k naive} is non-positive. 
    
    In the rest of the proof we consider the remaining diagonal~\cref{cased 1 gain} and 
    off-diagonal cases~\cref{caseod 1 gain}--\cref{caseod 3 gains} that we will define below. The cases are organised according to
    the quantity \(\abs{\vl}+\sum J -\abs{J}\) which captures by how many factors of \(N^{1/2}\) the naive estimate~\cref{Xi k naive} 
    exceeds the target~\cref{Xi k claim} when all $\Phi$'s and $\psi$'s are set to be order one. 
    Within case~\cref{caseod 1 gain} we further differentiate whether an off-diagonal index pair
    \(ab\) or \(ba\) appears at least once in the tuple \(\bm\alpha\) or in one of the tuples \(\bm\beta\). Within case~\cref{caseod 2 gains} we distinguish according to the length of \(\abs{\vl}\) and \(\abs{J}\) as follows:
    \begin{enumerate}[label=\textbf{D\(\mathbf{\arabic*}\)},start=1]
        \item\label{cased 1 gain} \(\abs{\vl}+\sum J=\abs{J}+1\)
    \end{enumerate}
    \begin{enumerate}[label=\textbf{O\(\mathbf{\arabic*}\)},start=1]
        \item\label{caseod 1 gain} \(\abs{\vl}+\sum J=\abs{J}+3\)
        \begin{enumerate}[label=\textbf{\theenumi\alph*}]
            \item\label{caseod 1 gain od}\(ab\vee ba \in\bm \alpha\cup\bigcup_{(\vj,\bm\beta)\in J} \bm\beta\)
            \item\label{caseod 1 gain dd}\(J\in\set{\set{(\vj,(aa,bb))},\set{(\vj,(bb,aa))}}\)  
            and \(\bm\alpha\in\set{(aa,bb),(bb,aa)}\), i.e.\ \(\sum J=\abs{\vl}=2\) and \(\abs{J}=1\)
        \end{enumerate}
        \item\label{caseod 2 gains} \(\abs{\vl}+\sum J=\abs{J}+2\)
        \begin{enumerate}[label=\textbf{\theenumi\alph*}]
            \item\label{l1} \(\abs{\vl}=1\),
            \item\label{l2 J2} \(\abs{\vl}=2\), \(\abs{J}\ge 2\),
            \item\label{l2 J1 l11} \(\abs{\vl}=2\), \(\abs{J}=1\), \(l_1\ge 1\),
            \item\label{l2 J1 l10} \(\abs{\vl}=2\), \(\abs{J}=1\), \(l_1= 0\).
        \end{enumerate}
        \item\label{caseod 3 gains} \(\abs{\vl}+\sum J=\abs{J}+1\)
    \end{enumerate} 
    The list of four cases  above is exhaustive since \(\sum J+\abs{\vl}\ge \abs{J}+1\) by definition, and the subcases of~\cref{caseod 2 gains} are obviously exhaustive. Within case~\cref{caseod 1 gain} either some off-diagonal element appears in \(\bm\alpha\) or some \(\bm\beta\)
    (hence we are in case~\cref{caseod 1 gain od}), or the number of elements in
    \(\bm\alpha\) and all \(\bm\beta\) is even, c.f.\ the constraint on the number of diagonal elements in~\cref{I od def}.
    The latter case is only possible if \(\abs{J}=1\), \(\abs{\vl}=\sum J=2\) which is case~\cref{caseod 1 gain dd} 
    (note that \(\abs{\vl}\ge 2\) implies \(\abs{J}\le 1\), and  \(\abs{J}=0\) is impossible as it
    would imply  \(\abs{\vl}=3\), the number of elements in  \(\bm\alpha\), is odd.

    Now we give the estimates for each case separately. For case~\cref{cased 1 gain},  using the restriction in the summation in~\eqref{Xiod} to get
    \(3\le\abs{\vl}+\sum J=1+\abs{J}\), 
    we estimate
    \begin{equation}\label{Xi d est}
        \begin{split}
            \Xi_k^\mathrm{d}&=N^{-3(1+\abs{J})/2} \abs*{\sum_a [(GA)^k]_{aa} [(GA)^k G]_{aa}^{\abs{J}} } \\
            &\prec \frac{(N^{k/2-1})^{\abs{J}+1}}{N^{\abs{J}/2+1}}\Omega_k^{\abs{J}-1}\Omega_{k-1}\sum_a \norm{A\bm e_a}\Bigl(\frac{\norm{A\bm e_a}}{N^{1/2}}+ \sqrt{\frac{\rho}{N\eta}}\psi_k^\iso\Bigr)\\
            &\lesssim \Bigl(N^{k/2-1}\sqrt{\frac{\rho}{N\eta}}\Bigr)^{1+\abs{J}} \Phi_k^{\abs{J}-1}\Phi_{k-1} \psi_k^\iso%
        \end{split}
    \end{equation}
    where we used the first inequalities of~\cref{eq naive iso GAG,eq naive iso GA} for the \((GA)^k\) and one of the \((GA)^kG\) factors, and the second inequality of~\cref{eq naive iso GAG} for the remaining factors, and in the last step we used~\cref{eq A sum} and \(\psi_k^\iso\sqrt{\rho/\eta}\gtrsim 1\).  Finally we use Young's inequality \( \Phi_k^{\abs{J}-1}\Phi_{k-1} \psi_k^\iso\le
    \Phi_k^{\abs{J}+1}+ (\Phi_{k-1} \psi_k^\iso)^{(\abs{J}+1)/2}\).  
    This 
    confirms~\cref{Xi k claim}  in case~\cref{cased 1 gain}. 
    
    For the offdiagonal cases we will use the following so-called \emph{Ward-improvements}:   
    \begin{enumerate}[label=\textbf{I\(\mathbf{\arabic*}\)}]
        \item\label{IWard} Averaging over 
        \(a\) or \(b\) in \(\abs{((GA)^kG)_{ab}}\) gains a factor of \(\sqrt{\rho/(N\eta)}\) compared to~\cref{eq naive iso GAG},
        \item\label{IWardA}  Averaging over \(a\) in \(\abs{((GA)^k)_{ab}}\) gains a factor of \(\sqrt{\rho/(N\eta)}\) compared to~\cref{eq naive iso GA},
    \end{enumerate}
    at the expense of replacing a factor of \((1+\psi_k^\iso\sqrt{\rho/(N\eta)})\) in the definition of $\Omega_k$ 
    by a factor of \((1+\psi^\iso_{2k}/\sqrt{N\eta\rho})^{1/2}\). 
    These latter replacements necessitate 
    changing $\Omega_k$ to the larger $\Phi_k$ as a main control parameter in the estimates after Ward improvements. Indeed,~\cref{IWard,IWardA} follow directly from~\eqref{multicalG} of~\cref{lemma Psi G prod} and \(\abs{m^{(2)}}\lesssim \rho/\eta\), more precisely
    \begin{equation}\label{ward}
        \begin{split}
            \frac{1}{N}  \sum_a\abs{[(GA)^kG]_{ab}} &\le \frac{\sqrt{[(G^* A)^{k} G^\ast G (AG)^{k} ]_{bb}}}{\sqrt{N}}
            \prec  N^{k/2}\sqrt{\frac{\rho}{N\eta}} \Bigl(1+\psi^\iso_{2k}\sqrt{\frac{1}{N\eta \rho}}\Bigr)^{1/2}\\
            \frac{1}{N}  \sum_a\abs{[(GA)^k]_{ab}} &\le \frac{\sqrt{[(AG^\ast)^{k} (GA)^{k} ]_{bb}}}{\sqrt{N}}
            \prec  N^{k/2-1/2}\norm{A\bm e_b}\sqrt{\frac{\rho}{N\eta}} \Bigl(1+\psi^\iso_{2(k-1)}\sqrt{\frac{1}{N\eta \rho}}\Bigr)^{1/2}\\
            \frac{1}{N}  \sum_a\abs{[(GA)^k]_{ab}}^2 &=\frac{ [(AG^\ast)^{k} (GA)^{k} ]_{bb}}{N}
            \prec  N^{k-1}\norm{A\bm e_b}^2 \frac{\rho}{N\eta} \Bigl(1+\psi^\iso_{2(k-1)}\sqrt{\frac{1}{N\eta \rho}}\Bigr),
        \end{split}
    \end{equation}
    where the first step in each case  followed from a Schwarz inequality and summing up the indices explicitly.  
    This improvement is essentially equivalent to using the \emph{Ward-identity} $GG^*= \Im G/\eta$ in~\eqref{ward}.
    
    Now we collect these gains over the naive bound given in~\eqref{Xi k naive} for each case.  
    Note that whenever a factor $\sqrt{\rho/(N\eta)}$ is gained,  the additional $1/\sqrt{N}$ is freed up along the second inequality
    in~\eqref{Xi k naive} which can be used to compensate the positive $N$-powers. 
    
    For case~\cref{caseod 3 gains} we have \(\abs{J}\ge 2\) and estimate all but the first two \((\bm j,\bm\beta)\) 
    factors in~\cref{Xi od def} trivially using the last inequality in~\cref{eq naive iso GAG} to obtain 
    \begin{equation}
        \Xi_k^\mathrm{od}\prec N^{-3(1+\abs{J})/2} (N^{k/2}\Omega_k)^{\abs{J}-2} \sum_{ab}\abs*{[(GA)^k]_{ba}}\abs*{[(GA)^kG]_{ab}}\abs*{[(GA)^kG]_{ab}}.
    \end{equation}
    For the last two factors we use first inequality in~\cref{eq naive iso GAG} 
    and then estimate  as
    \begin{equation}\label{o3}
        \begin{split}
            &\sum_{ab}\abs*{[(GA)^k]_{ba}}\abs*{[(GA)^kG]_{ab}}\abs*{[(GA)^kG]_{ab}}\\
            &\quad\lesssim N^{k-1}\sum_{ab}\abs*{[(GA)^k]_{ba}}\Bigl(\norm{A \bm e_a}\norm{A \bm e_b}
            +(\psi_k^\iso)^2 \frac{\rho}{\eta}\Bigr)\prec \Bigl(N^{k/2}\sqrt{\frac{\rho}{\eta}}\Bigr)^3  \Phi_{k-1} (\psi_k^\iso)^2,
        \end{split}
    \end{equation}
    where in the second step we performed a Schwarz inequality for the double  \(a, b\) summation
    and used the last bound in~\cref{ward},~\cref{eq A sum} and \(1\lesssim \psi_k^\iso\sqrt{ \rho/\eta}\). Thus, we conclude 
    \begin{equation}
        \Xi_k^\mathrm{od}\prec\Bigl(N^{k/2-1}\sqrt{\frac{\rho}{N\eta}}\Bigr)^{\abs{J}+1}\Phi_k^{\abs{J}-2}\Phi_{k-1}(\psi_k^\iso)^2. 
    \end{equation}

    In case~\cref{l1} there exists some \(\vj\) with \(\abs{\vj}=2\) (recall that \(\sum J =\abs{J}+1\)). By estimating the remaining \(J\)-terms trivially by~\eqref{eq naive iso GAG}, we obtain 
    \begin{equation}\label{o2a}
        \begin{split}
            \Xi_k^\mathrm{od} &\prec N^{-3(1+\abs{J})/2-1/2} (N^{k/2}\Omega_k)^{\abs{J}-1}\sum_{ab}\abs{[(GA)^k]_{ab}} 
            \abs{[(GA)^{j_1}G]_{\beta_1}}\abs{[(GA)^{j_2}G]_{\beta_2}}\\
            &\prec N^{-3(1+\abs{J})/2-1/2} (N^{k/2}\Omega_k)^{\abs{J}-1}N^{k/2-1/2}\Omega_{j_2}
            \sum_{ab} \abs{[(GA)^k]_{ab}}\Bigl(\norm{A\bm e_a}+\norm{A\bm e_b}+\psi_{j_1}^\iso\sqrt{\frac{\rho}{\eta}}\Bigr) \\
            &\lesssim  \Bigl(N^{k/2-1}\frac{\Omega_k}{\sqrt{N}}\Bigr)^{\abs{J}-1} 
            \Bigl(N^{k/2-1}\sqrt{\frac{\rho}{N\eta}}\Bigr)^2\Phi_{k-1}\psi_{j_1}^\iso \Omega_{j_2}
        \end{split}
    \end{equation}
    for some \(j_1+j_2=k\) and double indices $\beta_1,\beta_2 \in \{ aa, bb, ab, ba\}$.
    Here in the second step we assumed without loss of generality \(j_1\ge 1\) (the case \(j_2\ge 1\) being completely analogous) and used the first inequality in~\eqref{eq naive iso GAG} for $\abs{[(GA)^{j_1}G]_{\beta_1}}$ and the second inequality in \eqref{eq naive iso GAG} for $\abs{[(GA)^{j_2}G]_{\beta_2}}$. Finally, in the last step we performed an \(a,b\)-Schwarz inequality,
    used the last bound in~\cref{ward} and~\cref{eq A sum}.
    
    In case~\cref{l2 J2} we have \(\abs{\vj}=1\) for all \(\vj\) since \(\sum J+\abs{\vl}=\abs{J}+2\) 
    implies $\sum J = \abs{J}$, and we estimate all but two \(J\)-factors trivially by the last inequality in~\eqref{eq naive iso GAG}, the other two \(J\)-factors (which are necessarily offdiagonal) by the first inequality in~\cref{eq naive iso GAG}, the \(l_1\)-factor by the last inequality in~\cref{eq naive iso GAG} and the \(l_2\) factor by the first inequality in~\cref{eq naive iso GA} (note that \(l_2\ge 1\)) to obtain 
    \begin{equation}
        \begin{split}
            \Xi_k^\mathrm{od} &\prec N^{-3(1+\abs{J})/2-1/2} (N^{k/2}\Omega_k)^{\abs{J}-2}\sum_{ab}\abs{[(GA)^{l_1}G]_{\alpha_1}}\abs{[(GA)^{l_2}]_{\alpha_2}} \abs{[(GA)^{k}G]_{ab}}^2\\
            &\prec N^{-3(1+\abs{J})/2-1/2} (N^{k/2}\Omega_k)^{\abs{J}-2}N^{3k/2-3/2}\Omega_{k-1}\sum_{ab}(\norm{A\bm e_a}+\norm{A \bm e_b}) \Bigl(\norm{A \bm e_a}\norm{A \bm e_b} + \frac{\rho}{\eta}(\psi_k^\iso)^2 \Bigr)\\
            &\lesssim  \Bigl(N^{k/2-1}\frac{\Omega_k}{\sqrt{N}}\Bigr)^{\abs{J}-2}N^{k/2-3/2}\Bigl(N^{k/2-1}\sqrt{\frac{\rho}{N\eta}}\Bigr)^2\Omega_{k-1}(\psi_k^\iso)^2,
        \end{split}
    \end{equation}
    where the last step used~\cref{eq A sum} and \(\psi_k^\iso \sqrt{\rho/\eta}\gtrsim 1\).  
    
    In case~\cref{l2 J1 l11} we use the first inequalities of~\cref{eq naive iso GAG,eq naive iso GA} for the \(l_1,l_2\)-terms (since \(l_1,l_2\ge 1\)) and the first inequality of~\cref{eq naive iso GAG} for the \((GA)^kG\) factor to obtain 
    \begin{equation}
        \begin{split}
            \Xi_k^\mathrm{od} &\lesssim N^{-7/2} \sum_{ab} \abs{[(GA)^{l_1}G]_{\alpha_1}}\abs{[(GA)^{l_2}]_{\alpha_2}} \abs{[(GA)^{k}G]_{ab}} \\
            &\prec N^{k-5}\Omega_{l_2-1}\sum_{ab} \Bigl(\norm{A\bm e_b}+\norm{A\bm e_a}+\sqrt{\frac{\rho}{\eta}}\psi_{l_1}^\iso\Bigr)(\norm{A\bm e_a}+\norm{A\bm e_b}) \Bigl(\norm{A \bm e_a}\wedge \norm{A \bm e_b} + \sqrt{\frac{\rho}{\eta}}\psi_{k}^\iso \Bigr)\\
            &\lesssim \Bigl(N^{k/2-1}\sqrt{\frac{\rho}{N\eta}}\Bigr)^2 \Omega_{l_2-1} \psi_{l_1}^\iso\psi_k^\iso
        \end{split}
    \end{equation}
    by~\cref{eq A sum}. 
    
    In case~\cref{l2 J1 l10} we write the single-\( G\) diagonal as \(G_{aa}=m + \landauOprec*{\sqrt{\rho/(N\eta)}}\) and 
    use \emph{isotropic resummation} for the leading $m$ term 
    into the \(\bm 1=(1,1,\ldots)\) vector of norm \(\norm{\bm 1}=\sqrt{N}\), i.e. 
    \[\sum_{a} G_{aa} [(GA)^k G]_{ab} = m[(GA)^k G]_{\bm 1b} + \landauOprec*{\sqrt{\frac{\rho}{N\eta}}}\sum_{a} \abs*{[(GA)^k G]_{ab}},\]
     and estimate
    \begin{equation}
        \begin{split}
            \Xi_k^\mathrm{od} &\lesssim N^{-7/2} \abs*{\sum_{ab} G_{aa} [(GA)^{k}]_{bb} [(GA)^{k}G]_{ab}} +  N^{-7/2} \sum_{ab} \abs*{G_{ab} [(GA)^{k}]_{ab} [(GA)^{k}G]_{ab}}\\
            & \prec N^{-7/2} \abs*{\sum_b [(GA)^{k}]_{bb} [(GA)^{k}G]_{\bm 1b}} + N^{-7/2}\sqrt{\frac{\rho}{N\eta}}\sum_{ab} \abs*{[(GA)^{k}]_{bb} [(GA)^{k}G]_{ab}} \\
            &\prec \sqrt{\frac{\rho}{\eta}}N^{k-4}\Omega_{k-1}\sum_{b}\norm{A\bm e_b} \Bigl(\norm{A\bm e_b}+\sqrt{\frac{\rho}{\eta}}\psi_k^\iso\Bigr)\lesssim (N^{k/2-1}\sqrt{\frac{\rho}{N\eta}}\Bigr)^2\Omega_{k-1}\psi_k^\iso
        \end{split}
    \end{equation}
    using the first inequalities of~\cref{eq naive iso GAG,eq naive iso GA}.

    In case~\cref{caseod 1 gain od} we use either~\cref{IWard} or~\cref{IWardA} 
    depending on whether the off-diagonal matrix 
    is of the form \((GA)^lG\) or \((GA)^l\) to gain one factor of \(\sqrt{\rho/(N\eta)}\) in either case and conclude~\cref{Xi k claim}. 

    Finally we consider case~\cref{caseod 1 gain dd} where there is no off-diagonal element 
    to perform Ward-improvement, but for which, using~\cref{eq A sum}, we estimate
    \begin{equation} 
        \begin{split}
            & N^{-4} \abs*{\sum_{ab} [(GA)^{k_1}G]_{aa}[(GA)^{k_2}]_{bb} [(GA)^{k_3}G]_{aa}[(GA)^{k_4}G]_{bb}} \\
            & \quad\prec N^{k-5}\Omega_{k-1}\Omega_{k_3}\sum_{ab}\norm{A\bm e_b}\Bigl(\norm{A\bm e_b}+\psi_{k_4}^\iso \sqrt{\frac{\rho}{\eta}}\Bigr)\le N^{k-3}\sqrt{\frac{\rho}{\eta}}\Omega_{k-1}\Bigl(1+\psi_{k_3}^\iso \sqrt{\frac{\rho}{N\eta}}\Bigr)(\psi_{k_4}^\iso+1)\\
            &\quad \lesssim \Bigl(N^{k/2-1}\sqrt{\frac{\rho}{N\eta}}\Bigr)^2\Omega_{k-1}\sum_{j=0}^{k}\psi_j^\iso\Omega_{k-j}
        \end{split}
    \end{equation}
    for any  exponents with \(k_1+k_2=k_3+k_4=k\). Here in case \(k_4>0\) we used the second inequalities of~\cref{eq naive iso GAG,eq naive iso GA} for the \(k_2,k_4\) factors and the first inequality of~\cref{eq naive iso GAG} for the \(k_1,k_3\) factors.
    The case \(k_4=0\) is handled similarly, with the same result, by estimating \([(GA)^{k_3}G]_{aa}\) instead of \([(GA)^{k_4}G]_{bb}\) using the first inequality of~\cref{eq naive iso GAG}. 
\end{proof}

\subsection{Proof of isotropic estimate \eqref{Psi master iso} in~\cref{prop master}}\label{sec iso}
First we state the isotropic version 
of~\cref{underline repl lemma}:
\begin{lemma}\label{underline repl lemma iso}
    For any deterministic unit vectors $\vx, \vy$ and \(k\ge 0\) we have 
    \begin{align}
        \label{Ekiso}
        \braket*{\vx,[(GA)^kG-m^{k+1}A^k]\vy}\Bigl(1+\landauOprec*{\frac{\psi^\av_0}{N\eta}}\Bigr)&=-m\braket{\vx,\un{W(GA)^kG}\vy} + \landauOprec*{\cE_k^\iso},
    \end{align}
    where \(\cE_0^\iso=0\) and for \(k\ge 1\)
    \begin{equation}\label{def:Ekiso}
        \begin{split}
            \cE_k^\iso &:= N^{k/2}\sqrt{\frac{\rho}{N\eta}}\Bigl(\psi^\iso_{k-1} + \frac{1}{N\eta}\sum_{j=1}^{k}\Bigl(\psi_j^\av+\psi_{k-j}^\iso+\sqrt{\frac{\rho}{N\eta}}\psi_j^\av \psi_{k-j}^\iso\Bigr)\Bigr).
        \end{split} 
    \end{equation}
\end{lemma}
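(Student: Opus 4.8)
The plan is to mirror, in the isotropic setting, the self-consistent-equation argument behind~\cref{underline repl lemma}. Fixing $\vx,\vy$ and expanding the leftmost resolvent of $\braket{\vx,(GA)^kG\vy}=\braket{\vx,G_1A_1G_2\cdots A_kG_{k+1}\vy}$ via~\eqref{G-m un}, then moving $m_1\braket{G_1-m_1}G_1$ to the left and invoking~\eqref{a priori 0}, produces the prefactor $1+\landauOprec*{\psi_0^\av/(N\eta)}$ and, using $A_1=A_1^\ast$ and $\braket{A_1^2}=1$, leaves
\begin{equation*}
\bigl(1+\landauOprec*{\tfrac{\psi_0^\av}{N\eta}}\bigr)\braket{\vx,(GA)^kG\vy}=m_1\braket{A_1\vx,G_2A_2\cdots G_{k+1}\vy}-m_1\braket{\vx,\un{WG_1}A_1G_2\cdots G_{k+1}\vy}.
\end{equation*}
Completing the renormalisation via~\eqref{def:underline} — i.e.\ differentiating the interior resolvents $G_2,\dots,G_{k+1}$ in the GOE direction — turns the last term into
\begin{equation*}
\braket{\vx,\un{WG_1}A_1G_2\cdots G_{k+1}\vy}=\braket{\vx,\un{W(GA)^kG}\vy}-\sum_{i=2}^{k+1}\braket{G_1A_1\cdots G_i}\braket{\vx,G_iA_i\cdots G_{k+1}\vy},
\end{equation*}
so it remains only to extract the leading term from $m_1\braket{A_1\vx,G_2\cdots G_{k+1}\vy}$ and to bound the trace-split products. (In the real symmetric case the GOE covariance additionally generates transpose terms $N^{-1}(G_1A_1\cdots G_i)^{\mathsf{t}}(G_iA_i\cdots G_{k+1})$ and the $N^{-1}G_1^2$ correction in~\eqref{G-m un}; carrying an extra $1/N$, these fold into $\cE_k^\iso$ by Cauchy--Schwarz and the isotropic a priori bounds, as such $1/N$-terms are routinely suppressed throughout the paper.)

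For the leading term I would apply~\eqref{a priori k iso} to the order-$(k-1)$ isotropic chain $\braket{A_1\vx,G_2A_2\cdots G_{k+1}\vy}$, using $\norm{A_1\vx}\le\norm{A_1}\le N^{1/2}$ from~\eqref{Bnorm}; this isolates the deterministic part $m^{k+1}(A^k)_{\vx\vy}$ and bounds the remainder by $\psi_{k-1}^\iso N^{k/2}\sqrt{\rho/N\eta}$, the first summand of $\cE_k^\iso$.

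The split terms go through~\cref{lemma Psi G prod}. By cyclicity $\braket{G_1A_1\cdots G_i}$ is an averaged chain with $i-1$ traceless matrices and one length-two resolvent block $G_iG_1$, whose deterministic part $m[z_i,z_1]m_2\cdots m_{i-1}\braket{A_1\cdots A_{i-1}}$ is $\lesssim (\rho/\eta)N^{(i-3)/2}$ by~\cref{trace prod lemma}; together with the fluctuation bound from~\eqref{multicalG} this gives $\abs{\braket{G_1A_1\cdots G_i}}\prec (\rho/\eta)N^{(i-3)/2}(1+\psi_{i-1}^\av/\sqrt{N\eta\rho})$. Similarly~\eqref{a priori k iso} yields $\abs{\braket{\vx,G_iA_i\cdots G_{k+1}\vy}}\prec N^{(k-i+1)/2}(1+\psi_{k+1-i}^\iso\sqrt{\rho/N\eta})$. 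Multiplying, the $N$-powers collapse to $N^{k/2-1}$; writing $\rho/\eta=N\cdot(\rho/N\eta)$ and using $(\rho/N\eta)(N\eta\rho)^{-1/2}=\sqrt{\rho/N\eta}\,(N\eta)^{-1}$ — which is precisely why the mixed term reappears as $\sqrt{\rho/N\eta}\,\psi_j^\av\psi_{k-j}^\iso/(N\eta)$ — the sum over $i$ (with $j=i-1$) reproduces $N^{k/2}\sqrt{\rho/N\eta}\,(N\eta)^{-1}\sum_{j=1}^k(\psi_j^\av+\psi_{k-j}^\iso+\sqrt{\rho/N\eta}\,\psi_j^\av\psi_{k-j}^\iso)$, the remaining part of $\cE_k^\iso$; the leftover constant $N^{k/2}\rho/(N\eta)$ is $\le N^{k/2}\sqrt{\rho/N\eta}$ (as $\rho\lesssim N\eta$) and is absorbed into the $\psi_{k-1}^\iso\ge1$ term.

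The case $k=0$ is the standard isotropic self-consistent equation and follows from~\cref{theorem G av local law}. The main obstacle is the power/index bookkeeping in the previous paragraph: one must verify that every split term lands \emph{inside} the claimed $\cE_k^\iso$ rather than in some larger expression, which hinges on the degree identities above and on $\rho\lesssim1\lesssim\rho/\eta$, $N\eta\rho\ge N^\epsilon$; the only genuinely new point relative to the already-settled averaged case~\cref{underline repl lemma} is the treatment of the real-symmetric transpose and $N^{-1}G^2$ contributions, which are strictly of lower order.
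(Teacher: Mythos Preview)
Your proposal is correct and follows essentially the same route as the paper's proof: expand the leftmost resolvent via~\eqref{G-m un}, extend the underline to the full chain generating the sum $\sum_{j=1}^{k}\braket{(GA)^jG}\braket{\vx,(GA)^{k-j}G\vy}$, extract the deterministic leading term from $m\braket{A\vx,(GA)^{k-1}G\vy}$ via~\eqref{a priori k iso} with $\norm{A\vx}\le N^{1/2}$, and bound the split terms through~\cref{lemma Psi G prod} exactly as in~\eqref{GG}. Your power-counting for the split products and the absorption of the constant piece into the $\psi_{k-1}^\iso$ term are accurate; the only addition beyond the paper is your explicit remark on the real-symmetric transpose and $N^{-1}G^2$ corrections, which the paper suppresses as routine.
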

\begin{proof}
    From~\cref{G-m un} applied to the first factor $G=G_1$, similarly  to~\eqref{selfcons},  we obtain 
    \begin{equation}\label{selfconsiso}
        \begin{split}
            \Bigl(1+\landauOprec*{\frac{\psi^\av_0}{N\eta}}\Bigr)\braket{\vx,(GA)^k G\vy} &= m\braket{\vx,(AG)^k\vy} - m\braket{\vx,\un{WG}(AG)^k\vy}\\
            &= m^{k+1}\braket{\vx,A^k\vy}-m\braket{\vx,\un{WG(AG)^k}\vy} \\
            &\quad+  m_1\sum_{j=1}^{k}\braket{(GA)^jG}\braket{\vx,(GA)^{k-j}G\vy} \\
            &\quad + \landauOprec*{N^{(k-1)/2}\sqrt{\frac{\rho}{N\eta}} \norm{A\vx }  \psi_{k-1}^\iso}, 
        \end{split}
    \end{equation}
    where we used the definition~\eqref{a priori k iso} for the first term and the definition~\eqref{def:underline}. An estimate analogous to~\eqref{GG} handles the sum and is incorporated in~\eqref{def:Ekiso}. This concludes the proof together with~\cref{lemma Psi G prod} and \(\norm{A\vx} \le \norm{A} \le N^{1/2}\). 
\end{proof}
Exactly as in~\cref{WGA cum exp} we perform a cumulant expansion 
\begin{equation}\label{WGA cum exp iso}
    \begin{split}
        &\E \braket{\vx,\un{W(GA)^kG}\vy} \braket{\vx,[(GA)^kG-m^{k+1} A^k]\vy}^{p-1} \\
        &\quad= \E \sum_{ab} \frac{x_a[(GA)^kG]_{b\vy}}{N} (\partial_{ab}+\partial_{ba}) [(GA)^kG-m^{k+1} A^k]_{\vx\vy}^{p-1} \\
        &\qquad+\E\sum_{ab} \frac{x_a \partial_{ab} [(GA)^kG]_{b\vy}}{N}[(GA)^kG-m^{k+1} A^k]_{\vx\vy}^{p-1}\\
        &\qquad + \sum_{j\ge 2}\frac{\kappa^\mathrm{d}_{j+1}}{j!N^{(j+1)/2}} \E \sum_a  \partial_{aa}^j \Bigl(x_a[(GA)^kG]_{a\vy} [(GA)^kG-m^{k+1} A^k]_{\vx\vy}^{p-1}\Bigr) \\
        &\qquad + \sum_{j\ge 2}\frac{\kappa^\mathrm{od}_{j+1}}{j!N^{(j+1)/2}}\E \sum_{a\ne b} (\partial_{ab}+\partial_{ba})^j \Bigl(x_a[(GA)^kG]_{b\vy} [(GA)^kG-m^{k+1} A^k]_{\vx\vy}^{p-1}\Bigr),
    \end{split}
\end{equation}
recalling~\cref{Wigner def} for the diagonal and off-diagonal cumulants. In fact, the formula~\eqref{WGA cum exp iso} is identical  to~\cref{WGA cum exp}  for $k+1$ instead of $k$
if the last $A=A_{k+1}$ in 
the product $(GA)^{k+1}= G_1 A_1G_2 A_2\ldots G_{k+1} A_{k+1}$ is chosen specifically $A_{k+1}= \vy\vx^*$. 

For the first line of~\cref{WGA cum exp iso}, after performing the derivative, we can also perform the summations
and estimate the resulting isotropic resolvent chains by using the last inequality of~\eqref{eq naive iso GAG} as well as~\cref{lemma Psi G prod}  to obtain 
\begin{equation}\label{firstline iso}
    \begin{split}
        &\sum_{ab} \frac{x_a[(GA)^kG]_{b\vy}}{N} (\partial_{ab}+\partial_{ba}) [(GA)^kG-m^{k+1} A^k]_{\vx\vy}^{p-1}\\
        & =  \sum_{j=0}^{2k} \frac{[(GA)^jG]_{\vx \vx}[(GA)^kG(GA)^{k-j}G]_{\vy\vy} + [(GA)^jG(GA)^kG]_{\vx \vy}[(GA)^{k-j}G]_{\vx\vy} }{N}  [(GA)^kG-m^{k+1} A^k]_{\vx\vy}^{p-2} \\
               &\prec \Bigl(N^{k/2}\sqrt{\frac{\rho}{N\eta}}\Bigr)^2\Bigl(1+\sum_{j=0}^{2k}\frac{\psi_j^\iso}{\sqrt{N\eta\rho}}
        \Bigl(1+\sqrt{\frac{\rho}{N\eta}}\psi_{2k-j}^\iso\Bigr)\Bigr)\abs*{[(GA)^kG-m^{k+1} A^k]_{\vx\vy}}^{p-2}.
    \end{split}
\end{equation}

For the second line of~\cref{WGA cum exp iso} we estimate 
\[
    \begin{split}
        \sum_{ab} \frac{x_a \partial_{ab} [(GA)^kG]_{b\vy}}{N} &= - \sum_{j=0}^{k}\sum_{ab} \frac{x_a [(GA)^jG]_{ba} [(GA)^{k-j}G]_{b\vy}}{N} \\
        &= - \sum_{j=0}^{k} \frac{[(GA^t)^jG(GA)^{k-j}G]_{\vx\vy}}{N} =\landauOprec*{N^{k/2}\frac{\rho}{N\eta}\Bigl(1+\frac{\psi_k^\iso}{\sqrt{N\eta\rho}}\Bigr)}.
    \end{split}
\]

For the third and fourth line of~\cref{WGA cum exp iso} we distribute the derivatives according to the product rule to estimate (with absolute value inside the summation to address both diagonal and off-diagonal terms)
\begin{equation}\label{Xidiso}
    \begin{split}
        &\sum_{j\ge 2}\frac{1}{N^{(j+1)/2}}
        \sum_{a, b}\abs*{ (\partial_{ab}+\partial_{ba})^j \Bigl(x_a[(GA)^kG]_{b\vy} [(GA)^kG-m^{k+1} A^k]_{\vx\vy}^{p-1}\Bigr)} \\
        &\qquad \le \sum_{\substack{\sum\bm j\ge 2\\1\le\abs{\bm j}\le p}} \Lambda_k(\bm j) \abs*{[(GA)^kG-m^{k+1} A^k]_{\vx\vy}}^{p-\abs{\bm j}}
    \end{split}
\end{equation}
where  
\begin{equation}\label{Lambda def}
    \Lambda_k(\bm j):=N^{(n-\sum \bm j)/2}\sum_{ab}\abs*{\Bigl((\partial_{ab}+\partial_{ba})^{j_0}\frac{x_a[(GA)^kG]_{b\vy}}{\sqrt{N}}\Bigr)\prod_{i=1}^n\Bigl((\partial_{ab}+\partial_{ba})^{j_i}\frac{[(GA)^kG]_{\vx\vy}}{\sqrt{N}}\Bigr) }
\end{equation}
and  the summation  in~\eqref{Xidiso} 
is performed over all \(\bm j=(j_0,\ldots,j_n) \in \N_0^n\) with \(j_0\ge 0\), \(j_1,\ldots,j_n\ge 1\) and \(\abs{\bm j}=n+1\).
Recall that $\sum {\bm j}=j_0+ j_1+ j_2+\ldots +j_n$.  
\begin{lemma}\label{Lambda lemma}
    For any admissible \(\bm j\) in the summation~\cref{Xidiso} it holds that 
    \begin{equation}\label{Lambda eq}
        \Lambda_k(\bm j) \prec \Bigl(N^{k/2}\sqrt{\frac{\rho}{N\eta}}\Phi_k\Bigr)^{\abs{\bm j}}.
    \end{equation}
\end{lemma}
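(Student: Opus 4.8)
The plan is to follow the proof of \cref{Xi lemma} closely, the only new feature being that the resolvent chains now carry the deterministic vectors $\vx,\vy$ at their ends instead of closing up into a trace. First I would carry out the $\sum\bm j$ derivatives in \eqref{Lambda def} by the Leibniz rule: each factor $(\partial_{ab}+\partial_{ba})$ hitting a chain $[(GA)^lG]_{\gamma\delta}$ splits it into a product of two chains whose $A$-lengths add up to $l$, with the former external index $\gamma$ inherited by the left factor and $\delta$ by the right factor, and the newly created internal index being $a$ or $b$. Thus, up to an overall sign and a combinatorial constant depending only on $k$ and $p$, $\Lambda_k(\bm j)$ is a sum of boundedly many terms of the form $N^{-(1+\sum\bm j)/2}\sum_{a,b}\abs{x_a}\prod_r\abs{[(GA)^{l_r}G]_{\gamma_r\delta_r}}$, where the product runs over $\sum\bm j+\abs{\bm j}$ chains with $\sum_r l_r=k\abs{\bm j}$, the vector $\vy$ appearing as right endpoint of exactly $\abs{\bm j}$ of them and $\vx$ as left endpoint of $\abs{\bm j}-1$ of them (the outermost pieces of the original factors), all remaining endpoints lying in $\set{a,b}$; moreover each of $a$ and $b$ occurs at least twice among the chain endpoints, since the internal index produced by a split appears simultaneously as the right end of one piece and the left end of the next.

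Second, I would set up the naive estimate: bound every positive-length chain through \eqref{eq naive iso GAG}, keeping the norm factors $\norm{A\bm e_a},\norm{A\bm e_b}$ at summation endpoints and using $\norm{\vx}=\norm{\vy}=1$ at the external ones; bound every length-zero chain $G_{\gamma\delta}$ by $\abs{G_{\gamma\delta}}\prec\delta_{\gamma\delta}+\sqrt{\rho/N\eta}$; and then perform the summations over $a$ and over $b$ by Cauchy--Schwarz, using \eqref{eq A sum} together with the squared Ward bounds in \eqref{ward}. A summation over an index sitting in an off-diagonal chain produces a gain $\sqrt{\rho/N\eta}$ while upgrading the accompanying factor $(1+\psi_l^\iso\sqrt{\rho/N\eta})$ to $(1+\psi_{2l}^\iso/\sqrt{N\eta\rho})^{1/2}$; this is exactly the mechanism producing the two square-root factors in the definition \eqref{Phi k def} of $\Phi_k$ (the remaining $\Omega_k$-type factors from \eqref{OmegePhi} coming from the chains that are not Ward-improved). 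Collecting the $N$-powers and grouping the $\psi$-factors according to the $\abs{\bm j}$ original chains, one finds that the naive estimate already delivers the claimed bound $(N^{k/2}\sqrt{\rho/N\eta}\Phi_k)^{\abs{\bm j}}$ whenever $\sum\bm j\ge\abs{\bm j}+2$, i.e.\ as soon as at least two derivatives fall beyond the minimal configuration $j_0=0$, $j_1=\dots=j_n=1$.

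Third, I would dispose of the finitely many remaining borderline shapes $\sum\bm j\in\set{\abs{\bm j}-1,\abs{\bm j},\abs{\bm j}+1}$ (recalling $j_i\ge1$ for $i\ge1$, $j_0\ge0$, $\sum\bm j\ge2$, these make up a short explicit list for each $\abs{\bm j}$), for which the naive bound overshoots the target by $N^{3/2}$, $N$, or $N^{1/2}$ respectively. Each is handled by a short power count, exactly in the spirit of the cases \cref{cased 1 gain} and \cref{caseod 1 gain}--\cref{caseod 3 gains} in the proof of \cref{Xi lemma}: the missing powers are recovered from the interplay of the savings $\norm{\vx}=\norm{\vy}=1$ at the external endpoints, the $\ell^1$ and $\ell^2$ summation bounds \eqref{eq A sum} (and the extra smallness $\sqrt{\rho/N\eta}$ of off-diagonal length-zero chains), and Ward improvements \eqref{ward} applied to whichever off-diagonal chains are still available, their number being precisely what is freed up by the first two effects. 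Since all bounds used are uniform in $\vx,\vy,\bm A,\bm z$ and there are only finitely many admissible $\bm j$, this establishes \eqref{Lambda eq}.

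I expect the bookkeeping in this last step to be the main obstacle, exactly as in \cref{Xi lemma}: for each borderline $\bm j$ and each term produced by the Leibniz rule one must decide which chains share a given summation index --- and can hence be paired in a Cauchy--Schwarz controlled by the squared Ward bounds \eqref{ward} --- and then verify that the extracted gain is always exactly sufficient, neither over- nor undershooting. The extra complication compared to \cref{Xi lemma} is that the vectors $\vx,\vy$ at the chain ends create new index patterns to track; the compensation is that, entering with unit norm, they simultaneously supply the savings that make the count close.
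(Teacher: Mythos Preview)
Your plan is sound and would work, but it is considerably more laborious than the paper's route. The paper does \emph{not} fully expand all the derivatives as in the proof of \cref{Xi lemma}. Instead it exploits a structural simplification available only in the isotropic setting: for each factor with index $i\ge 1$ in~\eqref{Lambda def}, the crude bound
\[
\abs*{(\partial_{ab}+\partial_{ba})^{j_i}\frac{[(GA)^kG]_{\vx\vy}}{\sqrt{N}}}\prec \frac{N^{k/2}}{\sqrt N}\,\Omega_k
\]
is applied directly, with \emph{no} Leibniz expansion of the $j_i$ derivatives at all. Only the first factor (the one carrying the weight $x_a$ and the endpoint $\vy$) is partially expanded, keeping just its last piece $[(GA)^jG]_{\cdot\vy}$ explicit. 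A single Cauchy--Schwarz against $\abs{x_a}$ (using $\norm{\vx}=1$) combined with one Ward identity on that piece already yields a naive estimate that disposes of all $\bm j$ with $\sum\bm j\ge n+2=\abs{\bm j}+1$, leaving only the two borderline cases $\sum\bm j=n$ and $\sum\bm j=n+1$ rather than the three you list. Each of these is then settled by one further Cauchy--Schwarz plus Ward on a single well-chosen chain, with no case splitting of the \textbf{D1}, \textbf{O1}--\textbf{O3} type.

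Your full-expansion approach buys uniformity of method with the averaged case at the price of heavier bookkeeping; the paper's approach trades that uniformity for a much shorter argument. Note also that your stated overshoots $N^{3/2},N,N^{1/2}$ are shifted by $\sqrt N$ relative to what the paper's naive estimate actually gives ($N$ and $N^{1/2}$ for the two genuine borderline cases); this suggests your naive bound does not yet exploit the $\abs{x_a}$ weight optimally. Revisiting that step would bring your count in line with the paper's and remove the spurious third case.
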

By combining~\cref{underline repl lemma iso,Lambda lemma,firstline iso,Xidiso} and~\cref{Lambda def}  we obtain 
\begin{equation}
    \abs*{\braket{\vx,[(GA)^k-m^{k+1}A^k]\vy}} \prec \cE_k^\iso + N^{k/2}\sqrt{\frac{\rho}{N\eta}}\Bigl(\Phi_k+\frac{1}{N\eta}\sum_{j=0}^{2k}\sqrt{\psi_{j}^\iso\psi_{2k-j}^\iso}+\frac{\psi_k^\iso}{N\eta}\Bigr),
\end{equation}
concluding the proof of~\cref{Psi master iso}.
\begin{proof}[Proof of~\cref{Lambda lemma}] 
    We recall the notations \(\Omega_k,\Phi_k\) from~\cref{Phi k def}.
    For a naive bound we estimate all but the first factor trivially in~\eqref{Lambda def} with 
    \begin{equation}\label{trivi}
        \abs*{(\partial_{ab}+\partial_{ba})^{j_i}\frac{[(GA)^kG]_{\vx\vy}}{\sqrt{N}}}\prec \frac{N^{k/2}}{N^{1/2}}\Omega_k.
    \end{equation}
    Note that the estimate is independent of the number
    of derivatives. For the first factor in~\eqref{Lambda def} we estimate, after performing the derivatives, all but the last \([(GA)^{k_i}G]\)-factor (involving \(\bm y\)) trivially by \eqref{eq naive iso GAG} as
    \begin{equation}\label{trivFirst}
        \abs*{(\partial_{ab}+\partial_{ba})^{j_0}\frac{x_a[(GA)^kG]_{b\vy}}{\sqrt{N}}}\prec \sum_{j=0}^k N^{(k-j)/2}\Omega_{k-j}\abs{x_a}\frac{\abs{[(GA)^jG]_{a\vy}}+\abs{[(GA)^jG]_{b\vy}}}{\sqrt{N}}.
    \end{equation}
    By combining~\cref{trivi,trivFirst} and the Schwarz-inequality 
    \begin{equation}
    \begin{split}
        \sum_{ab} \abs{x_a}\frac{\abs{[(GA)^jG]_{a\vy}}+\abs{[(GA)^jG]_{b\vy}}}{\sqrt{N}}& \le \sqrt{N}\norm{\vx} \sqrt{[(G^\ast A)^j G^\ast G (AG)^j]_{\vy\vy}}\\
        &\prec N^{j/2+1}\sqrt{\frac{\rho}{N\eta}}\Bigl(1+\frac{\psi_{2j}^\iso}{\sqrt{N\eta\rho}}\Bigr)^{1/2}
        \end{split}
    \end{equation}
    we conclude 
    \begin{equation}
        \Lambda_k(\bm j) \prec N^{(n-\sum\bm j)/2+1} N^{k/2}\sqrt{\frac{\rho}{N\eta}}\Phi_k \Bigl(N^{k/2}\frac{1}{\sqrt{N}}\Omega_k\Bigr)^{\abs{\bm j}-1},
    \end{equation}
    which implies~\cref{Lambda eq} in the case when \(\sum\bm j\ge n+2\) using that $\Omega_k\le \Phi_k$ and $\rho/\eta\gtrsim 1$. It thus only remains to consider the cases \(\sum \bm j=n\) and \(\sum \bm j=n+1\). 
    
    If \(\sum \bm j=n\), then \(n\ge 2\) and \(j_0=0\), \(j_1=j_2=\cdots=1\). By estimating the \(j_2,j_3,\ldots\) factors in~\cref{Lambda def} using~\cref{trivi} we then bound 
    \begin{equation}
        \begin{split}
           & \Lambda_k(\bm j) \prec \Bigl(N^{k/2}\frac{\Omega_k}{\sqrt{N}}\Bigr)^{\abs{\bm j}-2}\sum_{ab} \frac{\abs{x_a}\abs*{[(GA)^kG]_{b\vy}}}{\sqrt{N}} \sum_{j=0}^k \frac{\abs*{[(GA)^jG]_{\vx a}}\abs*{[(GA)^{k-j}G]_{b \vy}}}{\sqrt{N}}\\
            &\lesssim \Bigl(N^{k/2}\frac{\Omega_k}{\sqrt{N}}\Bigr)^{\abs{\bm j}-2} \frac{\sqrt{[(G^\ast A)^k G^\ast G (AG)^k]_{\vy\vy}}}{\sqrt{N}} \sum_{j=0}^k\frac{\sqrt{[(G^\ast A)^j G^\ast G (AG)^j]_{\vy\vy}[(G^\ast A)^{k-j} G^\ast G (AG)^{k-j}]_{\vx\vx}}}{\sqrt{N}}\\
            &\prec \Bigl(N^{k/2}\frac{\Omega_k}{\sqrt{N}}\Bigr)^{\abs{\bm j}-2} \Bigl(N^{k/2}\sqrt{\frac{\rho}{N\eta}}\Bigr)^2 \sqrt{\frac{\rho}{\eta}}\Phi_k \sum_{j=0}^k \Bigl(1+\frac{\psi_{2j}^\iso}{\sqrt{N\eta\rho}}\Bigr)^{1/2}\Bigl(1+\frac{\psi_{2(k-j)}^\iso}{\sqrt{N\eta\rho}}\Bigr)^{1/2}\lesssim \Bigl(N^{k/2}\sqrt{\frac{\rho}{N\eta}}\Phi_k\Bigr)^{\abs{\bm j}}
        \end{split}
    \end{equation}
    using \(\abs{\bm j}\ge 3\) and \(\Omega_k\le\Phi_k\), \(1\lesssim \rho/\eta\) in the last step.
    
    Finally, if \(\sum\bm j=n+1\), then \(n\ge 1\) by admissibility and either \(j_0=0\) or \(j_1=1\). In the first case we estimate the \(j_2,j_3,\ldots\) factors in~\cref{Lambda def} using~\cref{trivi}, and all but the first \([(GA)^jG]_{\vx\cdot}\) in the \(j_1\)-factor after differentiation trivially to obtain 
    \begin{equation}
        \begin{split}
            \Lambda_k(\bm j) &\prec N^{-1/2}\Bigl(N^{k/2}\frac{\Omega_k}{\sqrt{N}}\Bigr)^{\abs{\bm j}-2}\sum_{ab} \frac{\abs{x_a}\abs*{[(GA)^kG]_{b\vy}}}{\sqrt{N}} \sum_{j=0}^k N^{(k-j)/2}\Omega_{k-j}\frac{\abs*{[(GA)^jG]_{\vx a}}+\abs*{[(GA)^jG]_{\vx b}}}{\sqrt{N}}\\
            &\prec \Bigl(N^{k/2}\frac{\Omega_k}{\sqrt{N}}\Bigr)^{\abs{\bm j}-2} \Bigl(N^{k/2}\sqrt{\frac{\rho}{N\eta}}\Phi_k\Bigr)^2,
        \end{split}
    \end{equation}
    again using a Schwarz inequality. Finally, in the \(j_1=1\) case we estimate two \(j_0\)-factor using~\cref{trivFirst}, the \(j_2,j_3,\ldots\) factors trivially, and  to bound 
    \begin{equation}
        \begin{split}
            \Lambda_k(\bm j) &\prec N^{-1/2}\Bigl(N^{k/2}\frac{\Omega_k}{\sqrt{N}}\Bigr)^{\abs{\bm j}-2} \\
            &\qquad \times\sum_{ab} \sum_{j,l=0}^k N^{(k-l)/2}\Omega_{k-l}\abs{x_a}\frac{\abs{[(GA)^lG]_{a\vy}}+\abs{[(GA)^lG]_{b\vy}}}{\sqrt{N}} \frac{\abs*{[(GA)^{j}G]_{\vx a}}\abs*{[(GA)^{k-j}G]_{b\vy}}}{\sqrt{N}}\\
            &\prec \Bigl(N^{k/2}\frac{\Omega_k}{\sqrt{N}}\Bigr)^{\abs{\bm j}-2} \Bigl(N^{k/2}\sqrt{\frac{\rho}{N\eta}}\Phi_k\Bigr)^2,
        \end{split}
    \end{equation}
    where we used the trivial bound for the \(\abs*{[(GA)^{j}G]_{\vx a}}\) in order to estimate the remaining terms by a Schwarz inequality. This completes the proof of the lemma.
\end{proof}

\subsection{Reduction inequalities and bootstrap}\label{sec bootstrap}

In this section we prove the reduction inequalities in Lemma~\ref{pro:redin} and conclude the proof of our main result Theorem~\ref{theorem multi G local law} showing that $\psi_k^{\av/\iso}\lesssim 1$ for any $k\ge 0$.

\begin{proof}[Proof of Lemma~\ref{pro:redin}]
   The proof of this proposition is very similar to \cite[Lemma 3.6]{MR4479913}, we thus present only the proof in the averaged case. Additionally, we only prove the case when $k$ is even, if $k$ is odd the proof is completely analogous.
       
    Define $T=T_k:=A(GA)^{k/2-1}$, write $(GA)^{2k} = GTGTGTGT$ and use the spectral theorem
    for these four intermediate resolvents.
    Then, using that $|m_i|\lesssim 1$ and that $|\braket{A^k}|\lesssim N^{k/2-1}\braket{|A|^2}^{k/2}$,
    after a Schwarz inequality in the third line, we conclude that
    \[
    \begin{split}
        &\Psi_{2k}^\av=\frac{N^{(3-2k)/2}\eta^{1/2}}{\rho^{1/2}\braket{|A|^2}^k}\big|\braket{(GA)^{2k}-m_1\dots m_{2k}A^{2k}}\big| \\
        &\lesssim \sqrt{\frac{N\eta}{\rho}}+\frac{N^{(3-2k)/2}\eta^{1/2}}{N\rho^{1/2}\braket{|A|^2}^k}\left|\sum_{ijml}\frac{\braket{{\bm u}_i,T{\bm u}_j} \braket{{\bm u}_j,T{\bm u}_m}\braket{{\bm u}_m,T{\bm u}_l}\braket{{\bm u}_l,T{\bm u}_i}}{(\lambda_i-z_1)(\lambda_j-z_{k/2+1})(\lambda_m-z_{k+1})(\lambda_l-z_{3k/2+1})}\right| \\
        &\lesssim \sqrt{\frac{N\eta}{\rho}}+\frac{N^{(3-2k)/2+1}\eta^{1/2}}{\rho^{1/2}\braket{|A|^2}^k}\braket{|G|A(GA)^{k/2-1}|G|A(G^*A)^{k/2-1}}\braket{|G|A(GA)^{k/2-1}|G|A(G^*A)^{k/2-1}} \\
        &\lesssim \sqrt{\frac{N\eta}{\rho}}+\frac{N^{(3-2k)/2+1}\eta^{1/2}}{\rho^{1/2}\braket{|A|^2}^k}\left(N^{k/2-1}\braket{|A|^2}^{k/2}+\frac{\rho^{1/2}\braket{|A|^2}^{k/2}}{N^{(3-k)/2}\eta^{1/2}}\psi_k^\av\right)^2 \\
        &\lesssim \sqrt{\frac{N\eta}{\rho}}+\sqrt{\frac{\rho}{N\eta}}(\psi_k^\av)^2.
    \end{split}
    \]
    We remark that to bound $\braket{|G|A(GA)^{k/2-1}|G|A(G^*A)^{k/2-1}}$ in terms of $\psi_k^\av$ we used (ii) 
    of~\cref{lemma Psi G prod} together with $G^*(z) = G(\bar z)$.
\end{proof}

We are now ready to conclude the proof of our main result.

\begin{proof}[Proof of Theorem~\ref{theorem multi G local law}]

The proof repeatedly uses
 a simple argument called \emph{iteration}. By this we mean the following observation:
    whenever we know that \(X\prec x\)  implies
    \begin{equation}\label{X iter}
        X\prec A + \frac{x}{B} + x^{1-\alpha} C^{\alpha},
    \end{equation}
    for some constants \(B\ge N^\delta\), \(A,C>0\), and 
    exponent \(0<\alpha<1\), and we know that  \(X\prec N^D\) initially (here $\delta, \alpha$ and $D$ are $N$-independent 
    positive constants,
    other quantities  may depend on $N$)
    then  we also know that \(X\prec x\)  implies
     \begin{equation}\label{XAC}
        X\prec A + C. 
    \end{equation} 
The proof is simply to  
  iterate~\eqref{X iter} finitely many times  (depending only  on $\delta, \alpha$ and $D$).
 The fact that $\Psi_k^{\av/\iso}\prec N^D$ follows by a simple norm bound
    on the resolvents and $A$, so the condition \(X\prec N^D\) is always satisfied in our applications.

    By the standard single resolvent local laws in \eqref{oneG} we know that $\psi_0^\av=\psi_0^\iso=1$. 
    Using the master inequalities in~\cref{prop master} and the reduction bounds from~\cref{pro:redin}, 
    in the first step we will show that $\Psi_k^{\av/\iso}\prec \rho^{-1}$ for any $k\ge 1$ as an a priori bound. 
    Then, in the second step, we  feed this bound into the tandem of the master inequalities and 
    the reduction bounds to 
     improve the estimate to  $\Psi_k^{\av/\iso}\prec 1$.
     The first step is the critical stage of the proof, 
     here we need to show that our bounds are sufficiently strong to close the hierarchy of our estimates
     to yield a better bound on $\Psi_k^{\av/\iso}$ than the trivial $\Psi_k^{\av/\iso}\le N^{k/2}\eta^{-k-1}$
     estimate obtained by using the norm bounds $\norm{G}\le \eta^{-1}$ and $\norm{A}\le N^{1/2}$. 
     Once some improvement is achieved, it can be relatively easily iterated.

    The proof of $\Psi_k^{\av/\iso}\prec \rho^{-1}$ proceeds by  a step-two induction, we first prove that 
    $\Psi_k^{\av/\iso}\prec \rho^{-1}$ for $k=1,2$ and then show that if $\Psi_n^{\av/\iso}\prec \rho^{-1}$ 
    holds for all $n\le k-2$, for some $k\ge 4$, then it also holds for $\Psi_{k-1}^{\av/\iso}$ and $\Psi_{k}^{\av/\iso}$.
    
    Using~\eqref{Psi master av}--\eqref{Psi master iso}  we have
    \begin{equation}
        \label{eq:aib1}
        \begin{split}
            \Psi_1^\av&\prec 1+\frac{\sqrt{\psi_2^\iso}}{(N\eta\rho)^{1/4}}+\frac{\sqrt{\psi_2^\av}}{(N\eta\rho)^{1/4}}+(\psi_1^\iso)^{2/3}+\psi_1^\iso\sqrt{\frac{\rho}{N\eta}} +\frac{\psi_1^\av}{\sqrt{N\eta}} \\
            \Psi_1^\iso&\prec 1+\frac{\sqrt{\psi_2^\iso}}{(N\eta\rho)^{1/4}}+\frac{\psi_1^\av}{N\eta}+\psi_1^\iso\sqrt{\frac{\rho}{N\eta}}+\frac{\psi_1^\iso}{N\eta}
        \end{split}
    \end{equation}
    for $k=1$, using 
    $$
     \Phi_1\lesssim 1+ \psi_1^\iso \sqrt{\frac{\rho}{N\eta}} + \frac{\sqrt{ \psi_2^\iso}}{(N\eta\rho)^{1/4}}.
     $$
     Similarly, for $k=2$, using that $\Omega_1\le \Phi_1$, and estimating explicitly 
    $$
      \Phi_2 \lesssim 1%
      +(\psi_1^\iso)^2 \frac{\rho}{N\eta}
      + \frac{\psi_2^\iso}{(N\eta\rho)^{1/2}} 
      +  \frac{ (\psi_4^\iso)^{1/2}}{(N\eta\rho)^{1/4}} %
   $$
  by Schwarz inequalities and plugging it into~\eqref{Psi master av}--\eqref{Psi master iso}
   we have  
        \begin{equation}
        \label{eq:2aib}
        \begin{split}
            \Psi_2^\av&\prec 1+\psi_1^\av+\frac{\psi_2^\iso}{(N\eta\rho)^{1/12}}+\frac{\sqrt{\psi_4^\iso}}{(N\eta\rho)^{1/4}}+\frac{\sqrt{\psi_4^\av}}{(N\eta\rho)^{1/4}}+(\psi_2^\iso)^{2/3}+\sqrt{\psi_1^\iso\psi_2^\iso} \\
            &\quad+\frac{(\psi_2^\iso)^{3/4}(\psi_1^\iso)^{1/2}}{(N\eta\rho)^{1/8}}+\frac{\rho^{1/3}(\psi_2^\iso)^{2/3}(\psi_1^\iso)^{1/3}}{(N\eta\rho)^{1/6}}+\frac{\sqrt{\rho}(\psi_1^\av)^2}{(N\eta)^{3/2}}+(\psi_1^\iso)^2\frac{\rho}{N\eta} \\
            &\quad+(\psi_1^\iso)^{3/2}\sqrt{\frac{\rho}{N\eta}}+\frac{\rho^{1/2}\psi_1^\iso(\psi_2^\iso)^{1/2}}{(N\eta\rho)^{1/4}}+\frac{\psi_2^\av}{\sqrt{N\eta}},\\
            \Psi_2^\iso&\prec 1+\psi_1^\iso+\frac{\psi_1^\av}{N\eta}+\frac{\psi_2^\iso+\psi_2^\av}{(N\eta\rho)^{1/2}}+\frac{\sqrt{\psi_4^\iso}}{(N\eta\rho)^{1/4}} +\frac{\sqrt{\psi_1^\iso\psi_3^\iso}}{N\eta}+\frac{\psi_1^\av\psi_1^\iso}{(N\eta)^{3/2}}+(\psi_1^\iso)^2\frac{\rho}{N\eta}+\frac{\psi_2^\iso}{N\eta}.
        \end{split}
    \end{equation}
     In these estimates we frequently used that $\psi_k^{\av/\iso}\ge 1$, $\rho\lesssim 1$, $\rho/N\eta\le 1$  and $N\eta\rho\ge 1$ to 
    simplify the formulas.
        
    By \eqref{eq:aib1}-\eqref{eq:2aib}, using iteration for the sum $\Psi_1^\av+\Psi_1^\iso$, we readily conclude
    \begin{equation}
        \label{eq:eas1}
        \Psi_1^\av+\Psi_1^\iso\prec 1+\frac{\sqrt{\psi_2^\iso}}{(N\eta\rho)^{1/4}}+\frac{\sqrt{\psi_2^\av}}{(N\eta\rho)^{1/4}}.
    \end{equation}
   Note that since \eqref{eq:eas1} holds uniformly in the hidden parameters $A, z, \vx, \vy$  in $\Psi_1^{\av/\iso}$,  this bound serves as
   an upper bound on $\psi_1^\av+\psi_1^\iso$ (in the sequel, we will frequently use an already proven upper bound on $\Psi_k$ as
   an effective  upper bound on $\psi_k$ in the next steps without  explicitly mentioning it).
   Next, using this upper bound
   together with an iteration for $\Psi_2^\av+\Psi_2^\iso$ we have from~\eqref{eq:2aib} 
    \begin{equation}
        \label{eq:eas2}
        \Psi_2^\av+\Psi_2^\iso\prec 1+\frac{\sqrt{\psi_4^\iso}}{(N\eta\rho)^{1/4}}+\frac{\sqrt{\psi_4^\av}}{(N\eta\rho)^{1/4}}+\frac{\sqrt{\psi_1^\iso\psi_3^\iso}}{N\eta},
    \end{equation}
  again after several simplifications by Young's inequality and the basic inequalities 
  $\psi_k^{\av/\iso}\ge 1$, $\rho\lesssim 1$ and $N\eta\rho\ge 1$.

We now apply the reduction inequalities from Lemma~\ref{pro:redin} in the form
    \begin{equation}
        \label{eq:red}
        \begin{split}
            \Psi_4^\av&\prec \sqrt{\frac{N\eta}{\rho}}+\sqrt{\frac{\rho}{N\eta}}(\psi_2^\av)^2 \\
            \Psi_4^\iso&\prec  \sqrt{\frac{N\eta}{\rho}}+\psi_2^\av+\psi_2^\iso+\sqrt{\frac{\rho}{N\eta}}\psi_2^\av\psi_2^\iso \\
            \Psi_3^\iso&\prec  \sqrt{\frac{N\eta}{\rho}}+\left(\frac{N\eta}{\rho}\right)^{1/4}\sqrt{\psi_2^\av}+\psi_2^\iso+\left(\frac{\rho}{N\eta}\right)^{1/4}\psi_2^\iso\sqrt{\psi_2^\av},
        \end{split}
    \end{equation}
where the first inequality was inserted already  into the right hand side of~\eqref{eq:rediniso} to get  the second inequality
in~\eqref{eq:red}. 

    Then,  inserting~\eqref{eq:red} and~\eqref{eq:eas1} into~\eqref{eq:eas2}  and 
    using iteration, we conclude
    \begin{equation}
        \Psi_2^\av+\Psi_2^\iso\prec \frac{1}{\sqrt{\rho}}+\frac{\sqrt{\psi_2^\iso}+\sqrt{\psi_2^\av}}{(N\eta\rho)^{1/4}}+\frac{\psi_2^\av+\psi_2^\iso}{(N\eta)^{1/2}}
    \end{equation}
    which together with~\eqref{eq:eas1} implies
    \begin{equation}\label{Psi12}
    \Psi_1^\iso+\Psi_1^\av\prec \rho^{-1/4}, \qquad  \Psi_2^\iso+\Psi_2^\av\prec\rho^{-1/2}.
    \end{equation}

  We now proceed with a step-two induction on $k$. The initial step of the induction is~\eqref{Psi12}.
  Fix an even $k\ge 4$ and assume that
  \begin{equation}
  \label{eq:indhyp}
  \Psi_n^{\av/\iso}\prec \frac{1}{\rho} \qquad \mathrm{for}\quad n\le k-2.
  \end{equation}
  First of all we notice that using the reduction inequality \eqref{eq:redinav} for $k/2$ we obtain (assuming that $k$ is divisible by four)
  \begin{equation}
  \label{eq:impinf}
  \Psi_k^\av\prec \sqrt{\frac{N\eta}{\rho}}+\sqrt{\frac{\rho}{N\eta}}(\psi_{k/2}^\av)^2\prec  \sqrt{\frac{N\eta}{\rho}},
  \end{equation}
  where in the last inequality we used \eqref{eq:indhyp}. If $k$ is not divisible by four we conclude the same bound using the second inequality in \eqref{eq:redinav} instead of the first one. Next, using again the reduction inequality \eqref{eq:redinav} and the induction hypothesis \eqref{eq:indhyp} we obtain
  \begin{equation}
  \label{eq:apbav}
  \Psi_{2n}^\av \prec \sqrt{\frac{N\eta}{\rho}} \qquad \mathrm{for} \quad n\le k.
  \end{equation}
  We point out that for $n=k$ we used
  \[
  \Psi_{2k}^\av\prec \sqrt{\frac{N\eta}{\rho}}+\sqrt{\frac{\rho}{N\eta}}(\psi_k^\av)^2\prec \sqrt{\frac{N\eta}{\rho}},
  \]
  where in the last inequality we used \eqref{eq:impinf}, and a similar bound for $n=k-1$. Similarly, in the isotropic case, using \eqref{eq:rediniso}, we obtain
  \begin{equation}
  \label{eq:apbiso}
  \Psi_{k+j}^\iso\prec  \sqrt{\frac{N\eta}{\rho}}+  \left(\frac{N\eta}{\rho}\right)^{1/4}\sqrt{\psi_{2j}^\av}+\psi_k^\iso+\left(\frac{\rho}{N\eta}\right)^{1/4}\sqrt{\psi_{2j}^\av}\psi_k^\iso\prec  \sqrt{\frac{N\eta}{\rho}} \qquad \mathrm{for} \quad j\le k,
  \end{equation}
  where in the last inequality we used \eqref{eq:apbav} and that by \eqref{eq:rediniso} it follows
  \begin{equation}
  \label{eq:impinf2}
  \psi_k^\iso=\psi_{(k-2)+2}^\iso\prec \sqrt{\frac{N\eta}{\rho}} +\left(\frac{N\eta}{\rho}\right)^{1/4}\sqrt{\psi_4^\av}+\psi_{k-2}^\iso+\left(\frac{\rho}{N\eta}\right)^{1/4} \sqrt{\psi_4^\av}\psi_{k-2}^\iso\prec \sqrt{\frac{N\eta}{\rho}} .
  \end{equation}
  We point out that similarly, writing $k-1=(k-2)+1$, we also conclude that $\psi_{k-1}^\iso\prec \sqrt{N\eta/\rho}$. Then, by using \eqref{eq:apbav} and the induction hypothesis \eqref{eq:indhyp} in the definition of $\Phi_n$ in \eqref{Phi k def}, we readily conclude that
  \begin{equation}
  \label{eq:bPhis}
  \Phi_n\prec \frac{1}{\rho} \qquad \mathrm{for}\quad n\le k.
  \end{equation}
  Furthermore, we notice that, by using  \eqref{eq:indhyp} and \eqref{eq:impinf2} (as well as the similar bound for $\psi_{k-1}^\iso$ stated below it) in the definition of $\Omega_n$ in \eqref{Phi k def}, we also have
   \begin{equation}
  \label{eq:bOmes}
  \Omega_n\prec 1 \qquad \mathrm{for}\quad n\le k.
  \end{equation}
  
  We are now ready to consider the master inequalities for $\Psi_{k-1}^{\av/\iso}$ and $\Psi_k^{\av/\iso}$. Using \emph{iteration} as in \eqref{X iter}--\eqref{XAC}, together with the bounds \eqref{eq:apbav}, \eqref{eq:apbiso}, \eqref{eq:bPhis}--\eqref{eq:bOmes}, by \eqref{Psi master av}--\eqref{Psi master iso}, we obtain (recall that $N\eta\rho\gg 1$ and $\rho\lesssim 1$)
  \begin{equation}
  \label{eq:masterk-1}
  \begin{split}
  \Psi_{k-1}^\av&\prec \frac{1}{\rho}+(\psi_{k-1}^\iso)^{2/3}\rho^{-1/3}+\rho^{-1/2}\sqrt{\psi_{k-1}^\iso} \\
  \Psi_{k-1}^\iso&\prec\frac{1}{\rho}+\frac{\psi_{k-1}^\av}{N\eta},
  \end{split}
  \end{equation}
  and
    \begin{equation}
  \label{eq:masterk}
  \begin{split}
  \Psi_k^\av&\prec \frac{1}{\rho}+(\psi_k^\iso)^{2/3}\rho^{-1/3}+\rho^{-1/2}\big(\sqrt{\psi_k^\iso}+\sqrt{\psi_{k-1}^\iso}\big)+\sqrt{\psi_{k-1}^\iso\psi_k^\iso} +\frac{\psi_{k-1}^\av}{N\eta}\\
  \Psi_k^\iso&\prec\frac{1}{\rho}+\frac{\psi_k^\av+\psi_{k-1}^\av}{N\eta}.
  \end{split}
  \end{equation}
 Then, using \emph{iteration} in \eqref{eq:masterk-1} for $\Psi_{k-1}^\av+\Psi_{k-1}^\iso$, we immediately conclude that $\Psi_{k-1}^{\av/\iso}\prec \rho^{-1}$. Finally, plugging this information into \eqref{eq:masterk}, and using iteration once again for $\Psi_k^\av+\Psi_k^\iso$ we conclude that $\Psi_k^{\av/\iso}\prec \rho^{-1}$ as well. This completes the step-two induction hence the  first and the pivotal step of the proof. 
 
 In the second step we improve $ \Psi_k^{\av/\iso}\prec  \rho^{-1}$   to $ \Psi_k^{\av/\iso}\prec 1$ for all $k$. By plugging the bound $ \Psi_k^{\av/\iso}\prec  \rho^{-1}$  into the master inequalities
     in Proposition~\ref{prop master} and noticing that $\Phi_k \le 1 + \rho^{-1} (N\eta\rho)^{-1/4}$,  we directly conclude that 
   \begin{equation}
     \Psi_k^{\av/\iso}\prec 1+\frac{\rho^{-1}}{(N\eta\rho)^{1/12}}
    \end{equation}
     for any $k\ge 0$. We point that that the exponent $1/12$ comes from the fifth term in the first line of \eqref{Psi master av}.  Now we can use this improved inequality by plugging it again in the master inequalities
     to achieve
        \begin{equation}
     \Psi_k^{\av/\iso}\prec 1+\rho^{-1}\left(\frac{1}{(N\eta\rho)^{1/12}}\right)^2,
    \end{equation}
    and so on.    Recalling the assumption that $N\eta\rho\ge N^\epsilon$,
      we need to iterate this process
     finitely many times (depending on $k$, $\xi, K, \epsilon$) to achieve  $\Psi_k^{\av/\iso}\prec1$ also in the second regime. This concludes
 the proof of the theorem.

     \end{proof}

\section{Stochastic Eigenstate Equation and proof of Theorem~\ref{theo:flucque}}\label{sec:see}

Armed with the new local law (Theorem~\ref{theorem multi G local law}) and its direct corollary on the eigenvector overlaps (Theorem~\ref{thm:eth}),
the rest of the
proof of Theorem~\ref{theo:flucque} is very similar to the proof of \cite[Theorem 2.2]{MR4413210}, which is presented in \cite[Sections 3-4]{MR4413210}. For this reason we only explain the differences and refer to \cite{MR4413210} for a fully detailed proof. We mention that the proof in \cite{MR4413210} heavily relies on the theory of the stochastic eigenstate equation
initiated in~\cite{MR3606475} and then further developed in~\cite{MR4156609, MR4272266}.

Similarly to \cite[Sections 3-4]{MR4413210}, we present the proof only in the real case (the complex case is completely analogous and so omitted). We will prove Theorem~\ref{theo:flucque} dynamically, i.e. we consider the Dyson Brownian motion (DBM)
with initial condition $W$ and 
we will show that the overlaps of the eigenvectors
have Gaussian fluctuations after a time $t$ slightly bigger than $N^{-1}$.  With a separate
argument  then in Appendix~\ref{sec:GFT} we show that the (small) 
Gaussian component added along the DBM  flow
can be removed at the price of a negligible error.

More precisely, we consider the matrix flow
\begin{equation}\label{eq:matdbm}
    \dif W_t=\frac{\dif \widetilde{B}_t}{\sqrt{N}}, \qquad W_0=W,
\end{equation}
where \(\widetilde{B}_t\) is a standard real symmetric matrix Brownian motion (see e.g.~\cite[Definition 2.1]{MR3606475}). We denote the resolvent of \(W_t\) by \(G=G_t(z):=(W_t-z)^{-1}\), for \(z\in\mathbf{C}\setminus\mathbf{R}\). It is well know that
in the limit $N\to\infty$ the resolvent $G_t(z):=(W_t-z)^{-1}$, for $z\in\mathbf{C}\setminus\mathbf{R}$, becomes 
approximately  deterministic, and that its 
deterministic approximation is given by the scalar matrix $m_t\cdot I$. The function  $m_t=m_t(z)$ is the unique solution of the 
complex Burgers equation
\begin{equation}
    \partial_tm_t(z)=-m_t\partial_zm_t(z), \qquad m_0(z)=m(z),
\end{equation}
with initial condition $m(z)=m_{\mathrm{sc}}(z)$ being the Stieltjes transform of the semicircular law. Denote $\rho_t=\rho_t(z):=\pi^{-1}\Im m_t(z)$, then it is easy to see that $\rho_t(x+\ii 0)$ is a rescaling of $\rho_0=\rho_{\mathrm{sc}}$ by a factor $1+t$. In fact, $W_t$ is a Wigner matrix itself, with a normalization $\E | (W_t)_{ab}|^2 = N^{-1}(1+t)$ 
with  a Gaussian component.

Denote by $\lambda_1(t)\le\lambda_2(t)\le\dots\le\lambda_N(t)$ the eigenvalues of $W_t$, and let $\{{\bm u}_i(t)\}_{i\in [N]}$ be the corresponding eigenvectors. Then, it is known \cite[Theorem 2.3]{MR3606475} that $\lambda_i=\lambda_i(t)$, ${\bm u}_i={\bm u}_i(t)$ are the unique strong solutions of the following system of stochastic differential equations:
\begin{align}
    \label{eq:evaluflow}
    \dif \lambda_i&=\frac{\dif B_{ii}}{\sqrt{N}}+\frac{1}{N}\sum_{j\ne i} \frac{1}{\lambda_i-\lambda_j} \dif t \\\label{eq:evectorflow}
    \dif {\bm u}_i&=\frac{1}{\sqrt{N}}\sum_{j\ne i} \frac{\dif B_{ij}}{\lambda_i-\lambda_j}{\bm u}_j-\frac{1}{2N}\sum_{j\ne i} \frac{{\bm u}_i}{(\lambda_i-\lambda_j)^2}\dif t,
\end{align}
where \(B_t=(B_{ij})_{i,j\in [N]}\) is a standard real symmetric matrix Brownian motion (see e.g.~\cite[Definition 2.1]{MR3606475}).

Note that the flow for the diagonal overlaps \(\braket{{\bm u}_i, A {\bm u}_i}\), by \eqref{eq:evectorflow},
naturally also depends  on the off-diagonal overlap \(\braket{{\bm u}_i, A {\bm u}_j}\). Hence, even if we are only interested in diagonal overlaps, our analysis must also handle  off-diagonal overlaps.
In particular, this implies that there is no closed differential equation for only diagonal or only off-diagonal overlaps. However, in \cite{MR4156609} Bourgade, Yau, and Yin proved that the \emph{perfect matching observable}  $f_{{\bm \lambda},t}$, which is presented in \eqref{eq:deff} below, satisfies a parabolic PDE (see \eqref{eq:1dequa} below). We now describe how the observable $f_{{\bm \lambda},t}$ is constructed.

\subsection{Perfect matching observables}

Without loss of generality for the rest of the paper we assume that \(A\) is traceless, \(\braket{A}=0\), i.e.\ \(A=\mathring{A}\). 
We introduce the short-hand notation for the \emph{eigenvector overlaps}
\begin{equation}
    p_{ij}=p_{ij}(t):=\braket{{\bm u}_i(t), A {\bm u}_j(t)},
    \quad i, j\in [N]. 
\end{equation}
To compute the moments, we will consider monomials of eigenvector overlaps of the form $\prod_k p_{i_k j_k}$
where each index occurs an even number of times. 
We start  by introducing a particle picture and a certain graph that encode such  monomials:  each particle on the set of integers \([N]\) corresponds to two occurrences of an index \(i\) in the monomial product.
This particle picture was introduced in~\cite{MR3606475} and
heavily used also
in~\cite{MR4156609, MR4272266}. Each particle configuration
is encoded by a function  \({\bm \eta}:[N] \to \mathbf{N}_0\), where \(\eta_j:={\bm \eta}(j)\) denotes the number of particles at the site \(j\), and \(n({\bm \eta}):=\sum_j \eta_j= n\) is the total number of particles.  We denote the space of \(n\)-particle configurations by \(\Omega^n\).
Moreover, for any index pair \(i\ne j\in[N]\), we define \({\bm \eta}^{ij}\) to be the configuration obtained moving a particle from the site \(i\) to the site \(j\), if there is no particle in \(i\) then  \({\bm \eta}^{ij}:={\bm \eta}\).

We now define the \emph{perfect matching observable} (introduced in \cite{MR4156609}) for any given configuration \({\bm \eta}\):
\begin{equation}
    \label{eq:deff}
    f_{{\bm \lambda},t}({\bm \eta}):= \frac{N^{n/2}}{ [2\braket{{A}^2}]^{n/2}} \frac{1}{(n-1)!!}\frac{1}{\mathcal{M}({\bm \eta}) }\E\left[\sum_{G\in\mathcal{G}_{\bm \eta}} P(G)\Bigg| 
    {\bm \lambda}\right], \quad \mathcal{M}({\bm \eta}):=\prod_{i=1}^N (2\eta_i-1)!!,
\end{equation}
with \(n\) being the number of particles in the configuration \({\bm \eta}\). Here $\mathcal{G}_{\bm \eta}$ denotes the set of perfect matchings on the complete graph with vertex set
\[
\mathcal{V}_{\bm \eta}:=\{(i,a): 1\le i\le n, 1\le a\le 2\eta_i\},
\]
and
\begin{equation}
    \label{eq:pg}
    P(G):=\prod_{e\in\mathcal{E}(G)}p(e), \qquad p(e):=p_{i_1i_2},
\end{equation}
where  \(e=\{(i_1,a_1),(i_2,a_2)\}\in \mathcal{V}_{\bm \eta}^2\), and \(\mathcal{E}(G)\) denotes the edges of \(G\). Note that in \eqref{eq:deff} we took the conditioning on the entire flow of eigenvalues, \({\bm \lambda} =\{\bm \lambda(t)\}_{t\in [0,T]}\) for some  fixed  \(T>0\). From now on we will always assume that $T\ll 1$ (even if not stated explicitly).

We always assume that the entire eigenvalue trajectory 
\(\{\bm \lambda(t)\}_{t\in [0,T]}\) satisfies the usual rigidity estimate asserting 
that the eigenvalues are very close to the deterministic quantiles of the semicircle law with very high probability. 
To formalize it, we define \begin{equation}\label{def:Omega}
    \widetilde{\Omega}=\widetilde{\Omega}_\xi:= \Big\{ \sup_{0\le t \le T} \max_{i\in[N]} N^{2/3}\widehat{i}^{1/3} | \lambda_i(t)-\gamma_i(t)| \le N^\xi\Big\}
\end{equation}
for any $\xi>0$, where \(\widehat{i}:=i\wedge (N+1-i)\). 
Here \(\gamma_i(t)\) denote the \emph{quantiles} of $\rho_t$, defined by
\begin{equation}\label{eq:quantin}
    \int_{-\infty}^{\gamma_i(t)} \rho_t(x)\, \dif x=\frac{i}{N}, \qquad i\in [N],
\end{equation}
where \(\rho_t(x)= \frac{1}{2(1+t)\pi}\sqrt{(4(1+t)^2-x^2)_+}\) is the semicircle law corresponding to \(W_t\).
Note that \(|\gamma_i(t)-\gamma_i(s)|\lesssim |t-s|\) for any  bulk index $i$ and
for any \(t,s\ge 0\).

The well known rigidity estimate  (see e.g.~\cite[Theorem 7.6]{MR3068390} or~\cite{MR2871147}) asserts
that
\[
\mathbf{P} (\widetilde{\Omega}_\xi)\ge 1-  C(\xi, D) N^{-D}
\]
for any (small) \(\xi>0\) and (large) \(D>0\).  This was proven for any fixed $t$ e.g. in~\cite[Theorem 7.6]{MR3068390} or~\cite{MR2871147}, the extension to all $t$ follows by a grid argument together with the fact that ${\bm \lambda}(t)$ is stochastically 
$1/2$-H\"older in $t$, which follows by Weyl's inequality
\[
\norm{{\bm \lambda}(t)-{\bm \lambda}(s)}_\infty\lesssim \norm{W_t-W_s}\stackrel{\dif}{=}\norm{W+\sqrt{s}U_1+\sqrt{t-s}U_2-W-\sqrt{s}U_1}\lesssim \sqrt{t-s},
\]
with $s\le t$ and $U_1,U_2$ being independent GUE/GOE matrices, which are also independent of $W$.

By~\cite[Theorem 2.6]{MR4156609} we know that the perfect matching observable
$f_{{\bm \lambda},t}$ is a solution of the following parabolic discrete PDE
\begin{align}\label{eq:1dequa}
    \partial_t f_{{\bm \lambda},t}&=\mathcal{B}(t)f_{{\bm \lambda},t}, \\\label{eq:1dkernel}
    \mathcal{B}(t)f_{{\bm \lambda},t}&=\sum_{i\ne j} c_{ij}(t) 2\eta_i(1+2\eta_j)\big(f_{{\bm \lambda},t}({\bm \eta}^{kl})-f_{{\bm \lambda},t}({\bm \eta})\big).
\end{align}
where 
\begin{equation}\label{eq:defc}
    c_{ij}(t):= \frac{1}{N(\lambda_i(t) -  \lambda_j(t))^2}.
\end{equation}
Note that the number of particles $n=n({\bm \eta})$ is preserved under the flow \eqref{eq:1dequa}. The eigenvalue trajectories are fixed in this proof, hence we will often omit ${\bm\lambda}$ from the notation, e.g. we will use $f_t=f_{{\bm \lambda}, t}$, and so on.

The main technical input in the proof of Theorem~\ref{theo:flucque} is  the following result (cf. \cite[Proposition 3.2]{MR4413210}):

\begin{proposition}\label{pro:flucque}
    For any \(n\in\mathbf{N}\) there exists \(c(n)>0\) such that for any \(\epsilon>0\), and for any \(T\ge N^{-1+\epsilon}\) it holds
    \begin{equation}\label{eq:mainbthissec}
        \sup_{{\bm \eta}}\big|f_T({\bm\eta})-\bm1(n\,\, \mathrm{even})\big|\lesssim N^{-c(n)},
    \end{equation}
    with very high probability, where the supremum is taken over configurations \({\bm \eta} \in \Omega^n \) 
    supported in the bulk, i.e. such that
    \(\eta_i=0\) for \(i\notin [\delta N, (1-\delta) N]\), with \(\delta>0\) from Theorem~\ref{theo:flucque}. The implicit constant in~\eqref{eq:mainbthissec} depends on \(n\), \(\epsilon\), \(\delta\).
\end{proposition}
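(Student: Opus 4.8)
The plan is to run the Dyson-Brownian-motion analysis of the stochastic eigenstate equation \eqref{eq:1dequa}, following the scheme of Bourgade--Yau--Yin \cite{MR4156609}, Marcinek--Yau \cite{2005.08425} and \cite{2103.06730}; the only genuinely new point is that every a priori input on the overlaps $p_{ij}$ and every computation of their low moments is now drawn from the improved thermalisation bound \cref{thm:eth} and from the multi-resolvent local law \cref{theorem multi G local law}, rather than from their $\norm{\mathring A}$-versions. Throughout we condition on the rigidity event $\widetilde\Omega_\xi$ of \eqref{def:Omega} and regard the eigenvalue trajectory, hence the coefficients $c_{ij}(t)$ of \eqref{eq:defc}, as fixed; on $\widetilde\Omega_\xi$ the $c_{ij}(t)$ are comparable with the deterministic profile $\rho_t(\gamma_i)^2(i-j)^{-2}$ for bulk indices $i,j$, and we normalise $\braket{A^2}=1$ as in \cref{sec loclaw proof}. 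As a first step I would record the a priori bound: \cref{thm:eth} gives $\abs{p_{ij}(t)}\prec N^{-1/2}$ for bulk indices and every fixed $t$, and a grid argument in $t$ together with the stochastic H\"older-$1/2$ continuity of the overlaps upgrades this to a uniform-in-$t$ statement, so that $\sup_{\bm\eta}\abs{f_{\bm\lambda,t}(\bm\eta)}\prec 1$ uniformly on $[0,T]$ over bulk configurations with a fixed particle number $n$. This is exactly the a priori bound used in \cite{2103.06730}, now available under the optimal hypothesis $\braket{\mathring A^2}\gg N^{-1}\norm{\mathring A}^2$.

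Next I would perform the short-range reduction. Following \cite{2005.08425,2103.06730}, fix a cut-off $\ell=N^{\omega_1}$ with $0<\omega_1\ll\epsilon$, replace $\mathcal{B}(t)$ of \eqref{eq:1dkernel} by its short-range truncation $\mathcal{S}_\ell(t)$ keeping only the terms with $\abs{i-j}\le\ell$, and let $g_t$ solve $\partial_t g_t=\mathcal{S}_\ell(t)g_t$ with $g_0=f_0$. A finite-speed-of-propagation estimate for the $(i-j)^{-2}$-type kernel, combined with the a priori bound, gives $\sup_{\bm\eta}\abs{f_{\bm\lambda,t}(\bm\eta)-g_t(\bm\eta)}\prec N^{-c}$ for bulk $\bm\eta$ and all $t\le T$. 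For the short-range flow one then invokes the $L^2\to L^\infty$ ultracontractivity and relaxation estimates of \cite{2005.08425}: for a suitable choice of $\omega_1$ the relaxation time of $\mathcal{S}_\ell(t)$ is much smaller than $T\ge N^{-1+\epsilon}$, whence $\sup_{\bm\eta}\abs{g_T(\bm\eta)-\langle g_T\rangle}\prec N^{-c}$, where $\langle\cdot\rangle$ denotes the average against the (explicit, time-independent) reversible measure of $\mathcal{S}_\ell$ over a bulk window; since this measure is preserved by the flow, $\langle g_T\rangle=\langle f_0\rangle+\landauOprec*{N^{-c}}$. All of this is quantitatively the same as in \cite{2005.08425,2103.06730}.

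It then remains to identify the equilibrium constant $\langle f_0\rangle$, and this is the step where \cref{theorem multi G local law} enters. The reversible measure puts almost all its mass on configurations whose $n$ particles occupy distinct, well-separated bulk sites, for which $f_0(\bm\eta)$ reduces to combinations of joint moments $\E[\prod_k p_{i_kj_k}\,|\,\bm\lambda]$ of overlaps with distinct index pairs; these are evaluated to leading order by the pairwise Wick rule, the surviving second moments being exactly $\braket{\Im G(\gamma_i+\ii\eta)A\,\Im G(\gamma_j+\ii\eta)A}$ at scale $\eta=N^{-1+\xi}$, which the $k=2$ case of \cref{theorem multi G local law} (equivalently \cref{corol}) computes with an error smaller than the $n=2$ variance $\sim\braket{A^2}/N$ by a factor $N^{-c}$. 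The combinatorial normalisation built into \eqref{eq:deff} is tailored precisely so that the resulting Wick sum equals $\bm1(n\,\,\mathrm{even})$, giving $\langle f_0\rangle=\bm1(n\,\,\mathrm{even})+\landauOprec*{N^{-c}}$; combining this with the two previous paragraphs yields \eqref{eq:mainbthissec}. Note that the special configurations with several particles at a single bulk site, which carry the diagonal CLT in \cref{theo:flucque}, are not computed directly here: they are brought to the equilibrium value only through the relaxation of the short-range flow.

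The main obstacle is this last step under the optimal hypothesis on $A$: in \cite{2103.06730} the relevant two-resolvent bound carried an $\norm{\mathring A}$ error and therefore forced $\braket{\mathring A^2}\gtrsim c\norm{\mathring A}^2$, whereas \cref{theorem multi G local law} supplies the same control with $\braket{\mathring A^2}^{1/2}$ in place of $\norm{\mathring A}$ --- in particular for the off-diagonal overlap bound $\abs{p_{ij}}^2\sim\braket{\Im GA\,\Im GA}$ that governs configurations with two occupied sites --- so that all error terms scale correctly relative to the target variance $\braket{A^2}/N$ as soon as $\braket{\mathring A^2}\gg N^{-1}\norm{\mathring A}^2$. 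Once this ingredient is in place, the remaining components (short-range cut-off, finite speed of propagation, ultracontractivity, relaxation estimate, and the $t$-grid argument) require no change and can be imported verbatim from \cite{2005.08425,2103.06730}; the $n$-dependence of the exponent $c(n)$ in \eqref{eq:mainbthissec} comes, as there, from the accumulation of these errors along the $n$ overlap factors.
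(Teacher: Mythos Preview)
Your overall scheme (short-range truncation, finite speed of propagation, $L^2$ decay, $L^2\to L^\infty$ upgrade) matches the paper, but the crucial step --- where the multi-resolvent local law enters --- is mischaracterized, and as written contains a gap.

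The paper does not separate an abstract ``relaxation to the reversible measure'' from a subsequent ``computation of the equilibrium constant $\langle f_0\rangle$''. Instead it subtracts the target constant from the outset, defining $h_t$ as the short-range evolution of $\Av(K,\bm y)(g_0-\bm1(n\text{ even}))$ (see \eqref{g-1}), and proves the $L^2$ bound $\norm{h_{T_1}}_2\lesssim K^{n/2}\mathcal{E}$ of \cref{prop:mainimprov} directly via an energy estimate. The mechanism is the replacement lemma~\cref{lem:replacement}: the $\mathcal{S}$-Dirichlet form is dominated by that of the all-jump operator $\mathcal{A}(t)$ of \eqref{eq:defAgen}, and the cross-term in $\braket{h_t,\mathcal{A}(t)h_t}$ contains the Poisson-kernel average
\[
\sum_{\bm j}\Bigl(\prod_r a_{i_rj_r}(t)\Bigr)\bigl(g_t(\bm x^{\bm i\bm j}_{\bm a\bm b})-\bm1(n\text{ even})\bigr).
\]
Summing $a_{ij}$ over $j$ against the overlap products in $g_t$ produces, via the spectral decomposition, traces $\braket{\Im G_t(z_1)A\cdots\Im G_t(z_k)A}$ for every $2\le k\le n$, which \cref{theorem multi G local law} evaluates on the event $\widehat\Omega$ of \eqref{eq:hatomega}; after the perfect-matching combinatorics the leading deterministic contribution is exactly $\bm1(n\text{ even})$ and only the local-law errors survive. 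This yields the Gronwall inequality \eqref{eq:finb} at once --- there is no separate equilibrium computation.

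Your alternative route has two concrete problems. First, the ``relaxation estimates of \cite{2005.08425}'' you invoke \emph{are} the energy estimate just described; they already build in the limit $\bm1(n\text{ even})$ through the local law, so there is no independent abstract spectral-gap statement for $\mathcal{S}_\ell$ to appeal to. Second, your identification $\langle f_0\rangle=\bm1(n\text{ even})$ via a ``pairwise Wick rule'' on the initial overlaps is circular: a Wick rule for the higher joint moments $\E[\prod_k p_{i_kj_k}\mid\bm\lambda]$ of a general Wigner matrix is precisely what the proposition is proving, and the $k=2$ local law controls only second moments. In the paper all cases $2\le k\le n$ of \cref{theorem multi G local law} are used (this is why $\widehat\Omega$ is defined as an intersection over $k$), and the residual deterministic terms $\prod_i m_t(z_i)\braket{A^{k_i}}$ with some $k_i\ge 3$ are bounded by $N^{-\delta'}$ using the hypothesis $\braket{A^2}\gtrsim N^{-1+\delta'}\norm{A}^2$ of \cref{theo:flucque}; this is the unique place that hypothesis enters, and it is absent from your account.
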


We are now ready to prove Theorem~\ref{theo:flucque}.

\begin{proof}[Proof of Theorem~\ref{theo:flucque}]
    
    Fix  \(i\in [\delta N,(1-\delta) N]\), then the convergence in \eqref{eq:clt} follows immediately from \eqref{eq:mainbthissec} choosing \({\bm \eta}\) to be the configuration with \(\eta_i=n\) and all other \(\eta_j=0\), together with a standard application of the Green function comparison theorem (GFT), relating the eigenvectors/eigenvalues of \(W_T\) to those of \(W\); see Appendix~\ref{sec:GFT} where we recall the GFT argument for completeness. We defer the interested reader to \cite[Proof of Theorem 2.2]{MR4413210} for a more detailed proof.
    
\end{proof}

\subsection{DBM analysis}

Since the current DBM analysis of~ \eqref{eq:1dequa}
heavily relies on \cite[Section 4]{MR4413210},  before starting it we introduce an equivalent representation of \eqref{eq:deff}
used in \cite{MR4413210} (which  itself is based on the particles representation from \cite{MR4272266}).

Fix $n\in\mathbf{N}$, and consider configurations ${\bm \eta}\in\Omega^n$, i.e. such that $\sum_j\eta_j=n$. We now give an equivalent representation of \eqref{eq:1dequa}--\eqref{eq:1dkernel} which is defined on the $2n$-dimensional lattice $[N]^{2n}$ instead of configurations of $n$ particles (see \cite[Section 4.1]{MR4413210} for a more detailed description).
Let ${\bm x}\in [N]^{2n}$ and define the configuration space
\begin{equation}
    \Lambda^n:= \{ {\bm x}\in [N]^{2n} \, : \,\mbox{\(n_i({\bm x})\) is even for every \(i\in [N]\)} \big\},
\end{equation}
where
\begin{equation}
    n_i({\bm x}):=|\{a\in [2n]:x_a=i\}|
\end{equation}
for all $i\in \mathbf{N}$. 

The correspondence between these two representations is given by 
\begin{equation}\label{xeta}
    {\bm \eta} \leftrightarrow {\bm x}\qquad \eta_i=\frac{n_i( {\bm x})}{2}.
\end{equation}
Note that \({\bm x}\) uniquely determines \({\bm \eta}\), but \({\bm \eta}\) determines only the coordinates
of \({\bm x}\) as a multi-set and not its ordering. Let \(\phi\colon\Lambda^n\to \Omega^n\), \(\phi({\bm x})={\bm \eta}\) 
be the projection from the \({\bm x}\)-configuration space to the \({\bm \eta}\)-configuration space using~\eqref{xeta}. We will then always consider
functions \(g\) on \([N]^{2n}\) that are push-forwards of some function \(f\) on \(\Omega^n\), 
\(g= f\circ \phi\), i.e.
they correspond to functions on the configurations
\[
f({\bm \eta})= f(\phi({\bm x}))= g({\bm x}).
\]
In particular \(g\) is supported on \(\Lambda^n\) and it is equivariant under permutation of the 
arguments, i.e.\ it
depends on \({\bm x}\) only as a multiset. 
We  thus consider the observable
\begin{equation}\label{eq:defg}
    g_t({\bm x})=g_{{\bm \lambda},t}({\bm x}):= f_{{\bm \lambda},t}( \phi({\bm x}))
\end{equation}
where \( f_{{\bm \lambda},t}\) was defined in~\eqref{eq:deff}.

Using the ${\bm x}$-representation space, we can now write the flow~\eqref{eq:1dequa}--\eqref{eq:1dkernel} as follows:
\begin{align}\label{eq:g1deq}
    \partial_t g_t({\bm x})&=\mathcal{L}(t)g_t({\bm x}) \\\label{eq:g1dker}
    \mathcal{L}(t):=\sum_{j\ne i}\mathcal{L}_{ij}(t), \quad \mathcal{L}_{ij}(t)g({\bm x}):&= c_{ij}(t) \frac{n_j({\bm x})+1}{n_i({\bm x})-1}\sum_{a\ne b\in[2 n]}\big(g({\bm x}_{ab}^{ij})-g({\bm x})\big),
\end{align}
where
\begin{equation}
    \label{eq:jumpop}
    {\bm x}_{ab}^{ij}:={\bm x}+\delta_{x_a i}\delta_{x_b i} (j-i) ({\bm e}_a+{\bm e}_b),
\end{equation} 
with \({\bm e}_a(c)=\delta_{ac}\), \(a,c\in [2n]\). This flow is  map  on functions 
defined on \(\Lambda^n\subset [N]^{2n}\) and it preserves equivariance.

We now define the scalar product and the natural measure on $\Lambda^n$:
\begin{equation}\label{eq:scalpro}
    \braket{f, g}_{\Lambda^n}=\braket{f, g}_{\Lambda^n, \pi}:=\sum_{{\bm x}\in \Lambda^n}\pi({\bm x})\bar f({\bm x})g({\bm x}),
    \qquad \pi({\bm x}):=\prod_{i=1}^N ((n_i({\bm x})-1)!!)^2,
\end{equation}
as well as the norm on \(L^p(\Lambda^n)\):
\begin{equation}
    \norm{f}_p=\norm{f}_{L^p(\Lambda^n,\pi)}:=\left(\sum_{{\bm x}\in \Lambda^n}\pi({\bm x})|f({\bm x})|^p\right)^{1/p}.
\end{equation}

By~\cite[Appendix A.2]{MR4272266}
it follows that the operator \(\mathcal{L}=\mathcal{L}(t)\)  is symmetric with respect to the measure \(\pi\)
and it is a negative operator on \(L^2(\Lambda^n)\) with Dirichlet form
\[
D(g)=\braket{g, (-\mathcal{L}) g}_{\Lambda^n} = \frac{1}{2}  \sum_{{\bm x}\in \Lambda^n}\pi({\bm x})
\sum_{i\ne j} c_{ij}(t) \frac{n_j({\bm x})+1}{n_i({\bm x})-1}
\sum_{a\ne b\in[2 n]}\big|g({\bm x}_{ab}^{ij})-g({\bm x})\big|^2.
\]
Let \(\mathcal{U}(s,t)\) be
the semigroup associated to \(\mathcal{L}\), i.e.\ for any \(0\le s\le t\) it holds
\[
\partial_t\mathcal{U}(s,t)=\mathcal{L}(t)\mathcal{U}(s,t), \quad \mathcal{U}(s,s)=I.
\]

\subsubsection{Short range approximation}

Most of our DBM analysis will be completely local, hence we will introduce a short range approximation $h_t$ (see its definition in \eqref{g-1} below) of
$g_t$ that will be exponentially small evaluated on ${\bm x}$'s which are not fully supported in the bulk.

Recall the definition of the quantiles $\gamma_i(0)$ from \eqref{eq:quantin}, then we define the sets
\begin{equation}\label{eq:defintJ}
    \mathcal{J}=\mathcal{J}_\delta:=\{ i\in [N]:\, \gamma_i(0)\in \mathcal{I}_\delta\}, \qquad \mathcal{I}_\delta:=(-2+\delta,2-\delta),
\end{equation}
which correspond to indices and spectral range in the bulk, respectively. From now on we fix a point ${\bm y}\in \mathcal{J}$, and an $N$-dependent parameter $K$ such that $1\ll K\le \sqrt{N}$. 
Next, we define the  \emph{averaging operator} as a simple multiplication operator by a ``smooth'' cut-off function:
\begin{equation}
    \Av(K,{\bm y})h({\bm x}):=\Av({\bm x};K,{\bm y})h({\bm x}), \qquad \Av({\bm x}; K, {\bm y}):=\frac{1}{K}\sum_{j=K}^{2K-1} \bm1(\norm{{\bm x}-{\bm y}}_1<j),
\end{equation}
with \(\norm{{\bm x}-{\bm y}}_1:=\sum_{a=1}^{2n} |x_a-y_a|\). Additionally, fix an  integer \(\ell\) with \(1\ll\ell\ll K\),
and define the short range coefficients
\begin{equation}\label{eq:ccutoff}
    c_{ij}^{\mathcal{S}}(t):=\begin{cases}
        c_{ij}(t) &\mathrm{if}\,\, i,j\in \mathcal{J} \,\, \mathrm{and}\,\, |i-j|\le \ell \\
        0 & \mathrm{otherwise},
    \end{cases}
\end{equation}
where \(c_{ij}(t)\) is defined in~\eqref{eq:defc}. The parameter $\ell$ is the length of the short range interaction.

The short range approximation $h_t=h_t({\bm x})$ of $g_t$  is defined as the unique solution of the parabolic equation
\begin{equation}\label{g-1}
    \begin{split}
        \partial_t h_t({\bm x}; \ell, K,{\bm y})&=\mathcal{S}(t) h_t({\bm x}; \ell, K,{\bm y})\\
        h_0({\bm x};\ell, K,{\bm y})=h_0({\bm x};K,{\bm y}):&=\Av({\bm x}; K,{\bm y})(g_0({\bm x})-\bm1(n \,\, \mathrm{even})), 
    \end{split}
\end{equation}
where
\begin{equation}\label{g-2}
    \mathcal{S}(t):=\sum_{j\ne i}\mathcal{S}_{ij}(t), \quad \mathcal{S}_{ij}(t)h({\bm x}):=c_{ij}^{\mathcal{S}}(t)\frac{n_j({\bm x})+1}{n_i({\bm x})-1}\sum_{a\ne b\in [2n]}\big(h({\bm x}_{ab}^{ij})-h({\bm x})\big).
\end{equation}
Since $K$, ${\bm y}$ and $\ell$  are fixed for the rest of this section we will often omit them from the notation. We conclude this section defining the transition semigroup \(\mathcal{U}_{\mathcal{S}}(s,t)=\mathcal{U}_{\mathcal{S}}(s,t;\ell)\) associated to the short range generator $\mathcal{S}(t)$.

\subsubsection{$L^2$-bound}

By standard finite speed propagation estimates (see \cite[Proposition 4.2, Lemmata 4.3--4.4]{MR4413210}), we conclude that
\begin{lemma}\label{lem:shortlongapprox}
    Let \(0\le s_1\le s_2\le s_1+\ell N^{-1}\), and \(f\) be a function on \(\Lambda^n\), 
    then for any \({\bm x}\in \Lambda^n\) supported on \(\mathcal{J}\) it holds
    \begin{equation}\label{eq:shortlong}
        \Big| (\mathcal{U}(s_1,s_2)-\mathcal{U}_{\mathcal{S}}(s_1,s_2;\ell) ) f({\bm x}) \Big|\lesssim N^{1+n\xi}\frac{s_2-s_1}{\ell} \| f\|_\infty,
    \end{equation}
    for any small \(\xi>0\). The implicit constant in~\eqref{eq:mainbthissec} depends on \(n\), \(\epsilon\), \(\delta\).
\end{lemma}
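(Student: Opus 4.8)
The plan is to run the standard Duhamel (finite speed of propagation) argument. Since the generators $\mathcal{L}(t)$ and $\mathcal{S}(t)$ are time-dependent, I would first use the flow equations for $\mathcal{U}$ and $\mathcal{U}_{\mathcal{S}}$ together with the cocycle property to write
\begin{equation}
    \mathcal{U}(s_1,s_2)-\mathcal{U}_{\mathcal{S}}(s_1,s_2;\ell)=\int_{s_1}^{s_2}\mathcal{U}(s,s_2)\bigl(\mathcal{L}(s)-\mathcal{S}(s)\bigr)\mathcal{U}_{\mathcal{S}}(s_1,s;\ell)\,\dif s .
\end{equation}
It then suffices to show that for every $s\in[s_1,s_2]$ and every $\bm x\in\Lambda^n$ supported in $\mathcal{J}$ one has $\bigl|\bigl[\mathcal{U}(s,s_2)(\mathcal{L}(s)-\mathcal{S}(s))\mathcal{U}_{\mathcal{S}}(s_1,s;\ell)f\bigr](\bm x)\bigr|\lesssim N^{1+n\xi}\ell^{-1}\norm{f}_\infty$ and then integrate in $s$, which produces the factor $s_2-s_1$.

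For this pointwise bound I would first use that $\mathcal{L}(s)$ and $\mathcal{S}(s)$ are generators of continuous-time jump processes on $\Lambda^n$ with non-negative rates: they annihilate constants and are positivity preserving, so both $\mathcal{U}(s,s_2)$ and $\mathcal{U}_{\mathcal{S}}(s_1,s;\ell)$ are $L^\infty$-contractions. Writing $g:=\mathcal{U}_{\mathcal{S}}(s_1,s;\ell)f$ we thus have $\norm{g}_\infty\le\norm{f}_\infty$. Since $\ell\ll K\le\sqrt N$ and $s_2-s\le\ell N^{-1}$, the finite speed of propagation estimates of~\cite[Proposition 4.2, Lemmata 4.3--4.4]{2103.06730} guarantee that, on the rigidity event $\widetilde\Omega$, for $\bm x$ supported in $\mathcal{J}$ the quantity $\bigl[\mathcal{U}(s,s_2)h\bigr](\bm x)$ differs by an additive error $\le N^{-D}\norm h_\infty$ (any $D>0$) from the value obtained by restricting $h$ to configurations still supported well inside the bulk; the same reasoning absorbs into this negligible error all contributions of $\mathcal{L}(s)-\mathcal{S}(s)$ coming from pairs $(i,j)$ with $j\notin\mathcal{J}$ (for occupied $i$ such $j$ are necessarily at large distance or reached at negligible rate).

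It then remains to estimate $\bigl|(\mathcal{L}(s)-\mathcal{S}(s))g(\bm z)\bigr|$ for $\bm z$ supported in the bulk. By the definition~\eqref{eq:ccutoff} of the short-range coefficients, after the previous reduction the only surviving terms have an occupied bulk site $i$ and $|i-j|>\ell$; for each of them $|g(\bm z^{ij}_{ab})-g(\bm z)|\le 2\norm{f}_\infty$, while the prefactor $\tfrac{n_j(\bm z)+1}{n_i(\bm z)-1}$ and the sum over $a\neq b\in[2n]$ contribute only an $n$-dependent combinatorial constant. On $\widetilde\Omega$ eigenvalue rigidity gives $|\lambda_i(s)-\lambda_j(s)|\gtrsim N^{-n\xi}|i-j|/N$ for bulk $i$ and hence $c_{ij}(s)\lesssim N^{1+n\xi}|i-j|^{-2}$, so that
\begin{equation}
    \sum_{j\,:\,|i-j|>\ell} c_{ij}(s)\lesssim N^{1+n\xi}\sum_{m>\ell}\frac1{m^2}\lesssim\frac{N^{1+n\xi}}{\ell}.
\end{equation}
Combining this with the $L^\infty$-contractivity from the second step and integrating over $s\in[s_1,s_2]$ yields~\eqref{eq:shortlong}. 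I expect the only genuinely delicate point to be the finite speed of propagation localisation in the second step (in particular the control of the near-boundary, non-$\mathcal{J}$ jumps); the remaining input is a combinatorial count combined with the rigidity bound, which is why the statement can be quoted as standard from~\cite{2103.06730}.
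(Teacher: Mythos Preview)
Your proposal is correct and follows exactly the standard Duhamel plus finite-speed-of-propagation argument that the paper invokes; indeed the paper does not give an independent proof here but simply cites~\cite[Proposition 4.2, Lemmata 4.3--4.4]{2103.06730}, which is precisely the localisation input you use in your second step. Your sketch (Duhamel identity, $L^\infty$-contractivity of both semigroups, rigidity bound $c_{ij}(s)\lesssim N^{1+n\xi}|i-j|^{-2}$ summed over $|i-j|>\ell$) is the content of that reference, so nothing further is needed.
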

In particular, this lemma shows that the observable $g_t$ and its short-range approximation $h_t$ are close to each other up to times $t\ll \ell/N$, hence to prove Proposition~\ref{pro:flucque} will be enough to estimate $h_t$. First in Proposition~\ref{prop:mainimprov} below will prove a bound in $L^2$-sense that will be enhanced to
an $L^\infty$ bound by standard parabolic regularity arguments.

Define the event \(\widehat{\Omega}\) on which the local laws for certain products of resolvents and traceless matrices \(A\) hold, i.e.\ for a small \(\omega>2\xi>0\) we define
\begin{equation}  
    \label{eq:hatomega}
    \begin{split}
       & \widehat{\Omega}=\widehat{\Omega}_{\omega, \xi} \\
       :&=\bigcap_{\substack{z_i: \Re z_i\in [-3,3], \atop |\Im z_i|\in [N^{-1+\omega},10]}}\Bigg[\bigcap_{k=2}^n \left\{\sup_{0\le t \le T}(\rho_t^*)^{-1/2}\left|\braket{G_t(z_1)A\dots G_t(z_k)A}-\braket{A^k}\prod_{i=1}^km_t(z_i)\right|\le \frac{N^{\xi+k/2-1}\braket{A^2}^{k/2}}{\sqrt{N\eta_*}}\right\} \\
        &\quad\cap \left\{\sup_{0\le t \le T}(\rho_{1,t})^{-{1/2}}\big|\braket{G_t(z_1)A}\big|\le \frac{N^\xi\braket{A^2}^{1/2}}{N\sqrt{|\Im z_1|}}\right\}\Bigg],
    \end{split}
\end{equation}
where \(\eta_*:=\min\set[\big]{|\Im z_i|\given i\in[k]}\), $\rho_{i,t}:=|\Im m_t(z_i)|$, and $\rho_t^*:=\max_i\rho_{i,t}$. Theorem~\ref{theorem multi G local law} shows that \(\widehat{\Omega}\) is a very high probability event, by using standard grid argument
for the spectral parameters and stochastic continuity in the time parameter. Note that by rigidity \eqref{def:Omega} and the spectral theorem we have (recall the definition of $\gamma_i(0)$ from \eqref{eq:quantin}):
\begin{equation}
\begin{split}
\label{eq:specdec}
&(\rho_t^*)^{-1} \braket{\Im G_t(\gamma_{i_1}(t)+\ii\eta_1) A\Im G_t(\gamma_{i_2}(t)+\ii\eta_2) A} \\
&\qquad\quad=\frac{1}{N\rho_t^*}\sum_{i,j=1}^N\frac{\eta^2|\braket{{\bm u}_i(t), A {\bm u}_j(t)}|^2}{((\lambda_i(t)-\gamma_{i_1}(t))^2+\eta_1^2)((\lambda_i(t)-\gamma_{i_2}(t))^2+\eta_2^2)}\\
&\qquad\quad\ge \frac{|\braket{{\bm u}_{i_1}(t), A {\bm u}_{i_2}(t)}|^2}{N\eta_1\eta_2\rho_t^*}\\
&\qquad\quad= \frac{N\big[\rho(\gamma_{i_1}(t)+\ii N^{-2/3})\wedge\rho(\gamma_{i_2}(t)+\ii N^{-2/3})\big]\cdot|\braket{{\bm u}_{i_1}(t), A {\bm u}_{i_2}(t)}|^2}{N^2\eta_1\eta_2\rho_t^*\big[\rho(\gamma_{i_1}(t)+\ii N^{-2/3})\wedge\rho(\gamma_{i_2}(t)+\ii N^{-2/3})\big]} \\
&\qquad\quad= N^{1-2\omega}\big[\rho(\gamma_{i_1}(t)+\ii N^{-2/3})\wedge\rho(\gamma_{i_2}(t)+\ii N^{-2/3})\big]\cdot|\braket{{\bm u}_{i_1}(t), A {\bm u}_{i_2}(t)}|^2
\end{split}
\end{equation}
with $\eta_k=\eta_k(t)$ defined by $N\eta_k\rho(\gamma_{i_k}(t)+\ii N^{-2/3})=N^\omega$. In particular, since \( |\Im m_t(z_1) \Im m_t(z_2)|\lesssim \rho(z_1)\rho(z_2)\), by the first line of \eqref{eq:hatomega} for $k=2$ we have
\[
\sup_{0\le t\le T}\sup_{z_1, z_2}(\rho_t^*)^{-1} \braket{\Im G_t(z_1) A\Im G_t(z_2) A}\lesssim \braket{A^2},
\] 
on  \(\widehat{\Omega}_{\omega,\xi}\), which by \eqref{eq:specdec}, choosing $z_k=\gamma_{i_k}(t)+\ii\eta$, implies
\begin{equation}\label{eq:apriori}
    |\braket{{\bm u}_i(t), A {\bm u}_j(t)}|^2\le \frac{N^{2\omega} \braket{A^2}}{N[\rho(\gamma_i(t)+\ii N^{-2/3})\wedge \rho(\gamma_j(t)+\ii N^{-2/3})]} \qquad \,\,
    \mbox{on \(\;\widehat{\Omega}_{\omega,\xi}\)}\cap\widetilde{\Omega}_\xi,
\end{equation}
simultaneously for all $i,j\in [N]$ and $0\le t\le T$. We recall that the quantiles $\gamma_i(t)$ are defined in \eqref{eq:quantin}.

\begin{remark}\label{rmk:Omega}
    The set $\widehat{\Omega}$ defined in \eqref{eq:hatomega} is slightly different from its analogue\footnote{ 
    The definition of $\widehat\Omega$ in the published version of~\cite[Eq. (4.20)]{MR4413210}
    contained a small error; the constraints were formally 
    restricted only to spectral parameters in the bulk, even though the necessary
    bounds were directly available at the edge as well. This slightly imprecise formulation
    is corrected in the latest arXiv version of~\cite{MR4413210}; Remark~\ref{rmk:Omega} refers to the corrected version.}
    in
    \cite[Eq. (4.20)]{MR4413210}. First, all the error terms now explicitly depend on $\braket{A^2}$, whilst in \cite[Eq. (4.20)]{MR4413210} we just bounded the error terms using the operator norm of $A$ (which was smaller than $1$ in \cite[Eq. (4.20)]{MR4413210}). Second, we now have a slightly weaker bound (compared to \cite[Eq. (4.20)]{MR4413210}) for $\braket{ \Im G_t(z_1)A\Im G_t(z_2)A}-\Im m_t(z_1) \Im m_t(z_2)\braket{A^2}$, since we now do not carry the dependence on the $\rho_{i,t}$'s optimally; as a consequence of this slightly worse bound close to the edges we get the overlap bound \eqref{eq:apriori}, instead of the optimal bound \cite[Eq. (4.21)]{MR4413210}, however this difference will not cause any change in the result. We remark that the bound \eqref{eq:apriori} is optimal for bulk indices.
\end{remark}

\begin{proposition}\label{prop:mainimprov}
    For any parameters satisfying \(N^{-1}\ll \eta\ll T_1\ll \ell N^{-1}\ll K N^{-1}\), and any small \(\epsilon, \xi>0\) it holds
    \begin{equation}\label{eq:l2b}
        \norm{h_{T_1}(\cdot; \ell, K, {\bm y})}_2\lesssim K^{n/2}\mathcal{E},
    \end{equation}
    with 
    \begin{equation}\label{eq:basimpr}
        \mathcal{E}:= N^{n\xi}\left(\frac{N^\epsilon\ell}{K}+\frac{NT_1}{\ell}+\frac{N\eta}{\ell}+\frac{N^\epsilon}{\sqrt{N\eta}}+\frac{1}{\sqrt{K}}\right),
    \end{equation}
    uniformly for particle configuration \({\bm y}\in \Lambda^n\) supported on \(\mathcal{J}\) and eigenvalue trajectory \({\bm \lambda}\) 
    in the high probability event \(\widetilde{\Omega}_\xi \cap \widehat{\Omega}_{\omega,\xi}\).
\end{proposition}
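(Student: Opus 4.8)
The plan is to follow the energy-method DBM analysis of \cite{2103.06730} (which itself rests on the stochastic eigenstate equation machinery of \cite{MR4156609,2005.08425}), the only essential change being that the a priori resolvent bounds driving the argument are now the sharper, Hilbert--Schmidt-normalised local laws collected in the event $\widehat\Omega_{\omega,\xi}$ of~\eqref{eq:hatomega}, which hold with very high probability as a consequence of Theorem~\ref{theorem multi G local law} together with a grid argument in the spectral parameters and stochastic continuity in $t$. First, using the finite-speed-of-propagation estimate Lemma~\ref{lem:shortlongapprox}, one reduces the analysis of $g_t-\bm1(n\text{ even})$ on bulk-supported configurations to that of its short-range approximation $h_t$; this costs the error $N^{1+n\xi}T_1/\ell$, which is the $NT_1/\ell$ contribution to $\mathcal{E}$, and from here on the argument is entirely local.

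The core is then an $L^2$ estimate for the short-range parabolic flow $\partial_t h_t=\mathcal{S}(t)h_t$. Since $\mathcal{S}(t)$ is self-adjoint and negative on $L^2(\Lambda^n,\pi)$, the monotonicity $\partial_t\|h_t\|_2^2=-2D(h_t)\le 0$ only yields $\|h_{T_1}\|_2\le\|h_0\|_2\lesssim K^{n/2}$, which is far too weak; the gain comes from relaxation of the short-range generator. Invoking the Nash-type ultracontractivity and heat-kernel regularity bounds for $\mathcal{S}$ established in~\cite{2005.08425} (valid on the rigidity event $\widetilde\Omega_\xi$), one shows that $h_{T_1}(\bm x)$ is, up to negligible errors, a local average of the initial datum $h_0=\Av(K,\bm y)\bigl(g_0-\bm1(n\text{ even})\bigr)$ over configurations within $\|\cdot\|_1$-distance $\lesssim N^\epsilon\ell$ of $\bm x$. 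Since $\Av(\cdot;K,\bm y)$ varies on the larger scale $K$ and is $1/K$-smooth, replacing this local average by $\Av(\bm x)$ times the corresponding local average of $g_0-\bm1(n\text{ even})$ introduces the boundary-layer errors $N^\epsilon\ell/K$ and $1/\sqrt K$.

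The essential --- and genuinely new --- step is to show that the local average of $g_0(\bm x)-\bm1(n\text{ even})$ over a mesoscopic window of size $\asymp N\eta$ is of order $N^\epsilon/\sqrt{N\eta}$. Here one expands the perfect matching observable from~\eqref{eq:deff}--\eqref{eq:pg}: averaging the monomials $\prod_{e}p(e)$ in their indices over such a window, and using eigenvalue rigidity, the leading term reduces --- after the $N^{n/2}[2\braket{A^2}]^{-n/2}$ normalisation and the combinatorics of pairings --- to products of normalised traces of the form $\braket{\Im G_0(z_1)A\,\Im G_0(z_2)A}$ with $|\Im z_i|\asymp\eta$, the off-pairing and diagonal ($p_{ii}$) contributions being lower order. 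On $\widehat\Omega_{\omega,\xi}$, the $k=2$ case of Theorem~\ref{theorem multi G local law} gives $\braket{G_0(z_1)AG_0(z_2)A}=m(z_1)m(z_2)\braket{A^2}+\mathcal{O}_\prec\bigl(\sqrt{\rho/N\eta}\,\braket{A^2}\bigr)$, so each such factor equals its deterministic (Gaussian) value up to relative error $N^\epsilon/\sqrt{N\eta}$; this is exactly why the perfect matching observable converges to $\bm1(n\text{ even})$, and it produces both the $N^\epsilon/\sqrt{N\eta}$ term and, through the mismatch between the averaging scale $\eta$ and the short-range length $\ell/N$, the $N\eta/\ell$ term of $\mathcal{E}$. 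Crucially, the error in Theorem~\ref{theorem multi G local law} carries the Hilbert--Schmidt factor $\braket{A^2}$ rather than $\|A\|^2$, so it is exactly compensated by the $\braket{A^2}$-normalisation in~\eqref{eq:deff}; this is why the argument now closes under the sole assumption $\braket{\mathring A^2}\gg N^{-1}\|\mathring A\|^2$ of Theorem~\ref{theo:flucque}, whereas~\cite{2103.06730} required $\braket{\mathring A^2}\gtrsim\|\mathring A\|^2$.

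Collecting the reductions above yields $\|h_{T_1}(\cdot;\ell,K,\bm y)\|_2\lesssim K^{n/2}\mathcal{E}$ with $\mathcal{E}$ as in~\eqref{eq:basimpr}, uniformly over $\bm y\in\mathcal{J}$ and over eigenvalue trajectories in $\widetilde\Omega_\xi\cap\widehat\Omega_{\omega,\xi}$ (this $L^2$ bound is then upgraded to the $L^\infty$ statement of Proposition~\ref{pro:flucque} by standard parabolic regularity). The main obstacle is the last step: transcribing the perfect-matching combinatorics of $g_0$ into genuine resolvent chains $\braket{\Im G A\cdots\Im G A}$ so that the new local law of Theorem~\ref{theorem multi G local law} applies --- carefully tracking the $\braket{A^2}$-normalisation through the $n$-fold pairing structure --- while simultaneously controlling the propagation of the short-range flow (with its heavy-tailed jump rates $c_{ij}\lesssim N/(i-j)^2$) precisely enough to make the local-average heuristic a rigorous $L^2$ bound. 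For the latter we rely on the parabolic regularity estimates already developed in~\cite{2005.08425,MR4156609} and transcribed in~\cite[Section~4]{2103.06730}, so that, as in that reference, only the genuinely new local-law input needs to be spelled out in detail.
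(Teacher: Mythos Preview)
Your proposal correctly identifies that the essential new input is the Hilbert--Schmidt form of the local law and that the $\braket{A^2}$ factors now match the normalisation in~\eqref{eq:deff}. But the mechanism you describe for the $L^2$ bound is not what the paper does and, as written, does not close.

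The paper does \emph{not} prove~\eqref{eq:l2b} by representing $h_{T_1}$ as a local average of the initial datum $h_0$ via heat-kernel/Nash estimates and then invoking the local law on $g_0$. Instead (following~\cite[Section~4]{2103.06730}) the key device is Lemma~\ref{lem:replacement}: the Dirichlet form of the single-jump generator $\mathcal{S}(t)$ is dominated by that of the \emph{multi-jump} generator $\mathcal{A}(t)$ of~\eqref{eq:defAgen}, whose kernel $\prod_r a_{i_rj_r}(t)$ with $a_{ij}=\eta N^{-1}[(\lambda_i-\lambda_j)^2+\eta^2]^{-1}$ is literally built out of entries of $\Im G_t$. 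After removing the short-range cutoff (the paper's~\eqref{eq:1}--\eqref{eq:3}), the sum $\sum_{\bm j}(\prod_r a_{i_rj_r})\,g_t(\bm x^{\bm i\bm j}_{\bm a\bm b})$ becomes a genuine resolvent chain $\braket{\Im G_t A\cdots\Im G_t A}$, and the local law on $\widehat\Omega$ --- applied at every time $t$ along the flow, not only at $t=0$ --- shows it equals $\bm1(n\text{ even})+\mathcal{O}(\mathcal{E})$. This yields the differential inequality~\eqref{eq:finb} with an explicit gap $\eta^{-1}$; since $T_1\gg\eta$, Gronwall kills the (order $K^{n/2}$) initial contribution and leaves only the forcing term.

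Your scheme lacks an analogue of this gap. From ultracontractivity and finite speed of propagation you can localise $h_{T_1}$ near $\bm y$, but that alone cannot beat $\|h_{T_1}\|_2\le\|h_0\|_2\lesssim K^{n/2}$. Moreover, a flat spatial average of $g_0$ over a window does not by itself produce $\Im G_0$; it is precisely the $a_{ij}$-weighting, supplied by the $\mathcal{A}(t)$ replacement, that turns index sums into resolvents. The Nash/ultracontractivity machinery of~\cite{2005.08425} enters only afterwards, in upgrading the $L^2$ bound of this proposition to the $L^\infty$ statement of Proposition~\ref{pro:flucque}; you have placed it one step too early.
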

\begin{proof}
    
    This proof  is very similar to that of \cite[Proposition 4.5]{MR4413210}, hence
    we will only explain the main differences. The reader should consult with \cite{MR4413210} for  a fully detailed proof.
    The key idea  is to replace the operator $\mathcal{S}(t)$ in \eqref{g-1}--\eqref{g-2},
    by the following operator 
    \begin{equation}\label{eq:defAgen}
        \mathcal{A}(t):=\sum_{{\bm i}, {\bm j}\in [N]^n}^*\mathcal{A}_{ {\bm i}{\bm j}}(t), \quad \mathcal{A}_{{\bm i}{\bm j}}(t)h({\bm x}):=\frac{1}{\eta}\left(\prod_{r=1}^n a_{i_r,j_r}^\mathcal{S}(t)\right)\sum_{{\bm a}, {\bm b}\in [2n]^n}^*(h({\bm x}_{{\bm a}{\bm b}}^{{\bm i}{\bm j}})-h({\bm x})),
    \end{equation}
    where
    \begin{equation}
        a_{ij}=a_{ij}(t):=\frac{\eta}{N((\lambda_i(t)-\lambda_j(t))^2+\eta^2)},
    \end{equation}
    and \(a_{ij}^\mathcal{S}\) are their short range version defined as in~\eqref{eq:ccutoff}, and
    \begin{equation}
        \label{eq:lotsofjumpsop}
        {\bm x}_{{\bm a}{\bm b}}^{{\bm i}{\bm j}}:={\bm x}+\left(\prod_{r=1}^n \delta_{x_{a_r}i_r}\delta_{x_{b_r}i_r}\right)\sum_{r=1}^n (j_r-i_r) ({\bm e}_{a_r}+{\bm e}_{b_r}).
    \end{equation}
    We remark that  ${\bm x}^{ij}_{ab}$ from~\eqref{eq:jumpop} 
    changes two entries of ${\bm x}$ per time, instead  ${\bm x}_{{\bm a}{\bm b}}^{{\bm i}{\bm j}}$ 
    changes all the coordinates of ${\bm x}$ at the same time, i.e. let ${\bm i}:=(i_1,\dots, i_n), {\bm j}:=(j_1,\dots, j_n)\in [N]^n$, with $\{i_1,\dots,i_n\}\cap\{j_1,\dots, j_n\}=\emptyset$, then ${\bm x}_{{\bm a}{\bm b}}^{{\bm i}{\bm j}}\ne {\bm x}$ iff for all $r\in [n]$ it holds that $x_{a_r}=x_{b_r}=i_r$. This means that   $\mathcal{S}(t)$ makes a jump only in one direction at a time, while
    $\mathcal{A}(t)$ jumps in all directions simultaneously. 
    Technically, the replacement of $\mathcal{S}(t)$ by $\mathcal{A}(t)$ is done on the level of Dirichlet forms:
    \begin{lemma}[Lemma 4.6 of \cite{MR4413210}]
        \label{lem:replacement}
        Let \(\mathcal{S}(t)\), \(\mathcal{A}(t)\) be the generators defined in~\eqref{g-2} and~\eqref{eq:defAgen}, respectively,
        and let $\mu$ denote the uniform measure on $\Lambda^n$ for which  \(\mathcal{A}(t)\) is reversible.
        Then there exists a constant \(C(n)>0\) such that
        \begin{equation}\label{eq:fundbound}
            \braket{h, \mathcal{S}(t) h}_{\Lambda^n, \pi}\le C(n) \braket{h, \mathcal{A}(t) h}_{\Lambda^n,\mu}\le 0,
        \end{equation}
        for any \(h\in L^2(\Lambda^n)\), on the very high probability set \(\widetilde{\Omega}_\xi \cap \widehat{\Omega}_{\omega,\xi}\).
    \end{lemma}
    
    Next, combining
    \begin{equation}\label{eq:l2der}
        \partial_t \norm{h_t}_2^2=2\braket{h_t, \mathcal{S}(t) h_t}_{\Lambda^n},
    \end{equation}
    which follows from~\eqref{g-1},
    with \eqref{eq:fundbound}, and using that \({\bm x}_{{\bm a}{\bm b}}^{{\bm i}{\bm j}}={\bm x}\) unless \({\bm x}_{a_r}={\bm x}_{b_r}=i_r\) for all \(r\in [n]\), we conclude that
    \begin{equation}\label{eq:boundderl2}
        \begin{split}
            \partial_t \norm{h_t}_2^2&\le C(n) \braket{h_t,\mathcal{A}(t) h_t}_{\Lambda^n,\mu} \\
            &=\frac{C(n) }{2\eta}\sum_{{\bm x}\in \Lambda^n}\sum_{{\bm i},{\bm j}\in [N]^n}^* \left(\prod_{r=1}^n a_{i_r j_r}^\mathcal{S}(t)\right)\sum_{{\bm a}, {\bm b}\in [2n]^n}^*\overline{h_t}({\bm x})\big(h_t({\bm x}_{{\bm a}{\bm b}}^{{\bm i} {\bm j}})-h_t({\bm x})\big)\left(\prod_{r=1}^n \delta_{x_{a_r}i_r}\delta_{x_{b_r}i_r}\right).
        \end{split}
    \end{equation}
    The star over $\sum$ means summation over two $n$-tuples of fully distinct indices.
    Then, proceeding as in the proof of \cite[Proposition 4.5]{MR4413210}, we conclude that
    \begin{equation}
        \label{eq:finb}
        \partial_t\norm{h_t}_2^2\le -\frac{C_1(n)}{2\eta}\norm{h_t}_2^2+\frac{C_3(n)}{\eta}\mathcal{E}^2K^n,
    \end{equation}
    which implies \(\norm{h_{T_1}}_2^2\le C(n) \mathcal{E}^2 K^n\), by a simple Gronwall inequality, using that \(T_1\gg \eta\).

    We point out that to go from \eqref{eq:boundderl2} to \eqref{eq:finb} we proceed exactly as in the proof of \cite[Proposition 4.5]{MR4413210} (with the  additional $\braket{A^2}^{k/2}$, $\braket{A^2}^{n/2}$  factors in \cite[Eq. (4.47)]{MR4413210} and \cite[Eq. (4.48)]{MR4413210}, respectively) except for the estimate in \cite[Eq. (4.43)]{MR4413210}. The error terms in this estimate
    used that  $|P(G)|\le N^{n\xi-n/2}$ uniformly in the spectrum, a fact that  we cannot establish near the edges
    as a consequence of the weaker bound \eqref{eq:apriori}.
    We now explain how we can still prove \cite[Eq. (4.43)]{MR4413210} in the current case. 
    The main mechanism is that  the strong bound $|P(G)|\le N^{n\xi-n/2} \braket{A^2}^{n/2} $
    holds for bulk indices and when an edge index $j$ is involved together with a bulk index $i$, 
    then the kernel  $a_{ij}\lesssim \eta/N$  is
    very small which balances the weaker estimate on the overlap.  Note that \eqref{eq:apriori} still 
    provides a nontrivial bound of order $N^{-1/3}$ for $|\langle {\bm u}_i, A {\bm u}_j\rangle|$
    since $\rho(\gamma_i(t)+\ii N^{-2/3})\gtrsim N^{-1/3}$ uniformly in $0\le t\le T$.
    
    We start with removing the short range cutoff from the kernel $a_{ij}^\mathcal{S}(t)$
    in the left hand side of~\cite[Eq. (4.43)]{MR4413210}:
    \begin{equation}
        \begin{split}
            \label{eq:newb}
            &\sum_{{\bm j}}^*\left(\prod_{r=1}^n a_{i_r j_r}^\mathcal{S}(t)\right)\big(g_t({\bm x}_{{\bm a}{\bm b}}^{{\bm i} {\bm j}})-\bm1(n\,\, \mathrm{even})\big) \\
            &=\sum_{{\bm j}}^*\left(\prod_{r=1}^n a_{i_r j_r}(t)\right)\left(\frac{N^{n/2}}{\braket{A^2}^{n/2} 2^{n/2}(n-1)!!}\sum_{G\in \mathcal{G}_{{\bm \eta}^{{\bm j}}}}P(G)-\bm1(n\,\, \mathrm{even})\right) \\
            &\quad -\sum_{{\bm j}}^{**}\left(\prod_{r=1}^n a_{i_r j_r}(t)\right)\left(\frac{N^{n/2}}{\braket{A^2}^{n/2} 2^{n/2}(n-1)!!}\sum_{G\in \mathcal{G}_{{\bm \eta}^{{\bm j}}}}P(G)-\bm1(n\,\, \mathrm{even})\right).
        \end{split}
    \end{equation}
    Here $\sum_{{\bm j}}^{**}$ denotes the sum over distinct $j_1,\dots,j_n$ such that at least one $|i_r-j_r|$ is bigger than $\ell$.
    
    Here the indices $i_1,\dots,i_n$ are fixed and such that $i_l\in [\delta N,(1-\delta)N]$, for any $l\in [n]$. We will now show that the second line in \eqref{eq:newb} is estimated by $N^{1+n\xi}\eta\ell^{-1}$. This is clear for the terms containing $\bm1(n\,\,\mathrm{even})$, hence we now show that this bound is also valid for the terms containing $P(G)$. We present this bound only for the case when $|j_1-i_1|>\ell$ and $|j_r-i_r|\le \ell$ for any $r\in\{2,\dots,n\}$. The proof in the other cases is completely analogous and so omitted. Additionally, to make our presentation easier we assume that $n=2$:
    \begin{equation}
        \begin{split}
            \label{eq:firststep11}
            &\sum_{ |j_1-i_1|>\ell, |j_2-i_2|\le \ell, \atop j_1\ne j_2} a_{i_1 j_2}(t)a_{i_1 j_2}(t)\left(\frac{N}{2\braket{A^2}}\sum_{G\in \mathcal{G}_{{\bm \eta}^{{\bm j}}}}P(G)\right) \\
            &=\left(\sum_{ cN\ge |j_1-i_1|>\ell, |j_2-i_2|\le \ell, \atop j_1\ne j_2}+\sum_{ |j_1-i_1|>cN, |j_2-i_2|\le \ell, \atop j_1\ne j_2}\right) a_{i_1 j_2}(t)a_{i_1 j_2}(t)\left(\frac{N}{2\braket{A^2}}\sum_{G\in \mathcal{G}_{{\bm \eta}^{{\bm j}}}}P(G)\right).
        \end{split}
    \end{equation}
    Here $c\le \delta/2$ is a small fixed constant  so that $j_1$ is still a bulk index if 
    $|i_1-j_1|\le cN$.
    The fact that the first summation in the second line of \eqref{eq:firststep11} is bounded by $N^{1+n\xi}\eta\ell^{-1}$ follows from~\eqref{eq:apriori}, i.e.  that $|\braket{{\bm u}_i,A{\bm u}_j}|\le N^{-1/2+\omega} \braket{A^2}^{1/2} $, with very high probability, for any bulk indices $i,j$, 
    in particular the bound $|P(G)|\le N^{n\xi-n/2} \braket{A^2}^{n/2} $ holds for this term.
    For the second summation we have that  
    \begin{equation}
        \label{eq:secondstep11}
        \begin{split}
            \sum_{ |j_1-i_1|>cN, |j_2-i_2|\le \ell, \atop j_1\ne j_2} a_{i_1 j_1}(t)a_{i_2 j_2}(t)\left(\frac{N}{2\braket{A^2}}\sum_{G\in \mathcal{G}_{{\bm \eta}^{{\bm j}}}}P(G)\right)&\lesssim \frac{N^{1+\xi} \eta}{N^{2/3}} \sum_{ |j_2-i_2|\le \ell} a_{i_2 j_2}(t) \\
            &\lesssim \frac{N^{1+\xi}\eta}{N^{2/3}}\le \frac{N\eta}{\ell},
        \end{split}
    \end{equation}
    where we used that $a_{i_1 j_1}(t)\lesssim \eta N^{-1}$, $\ell\ll K\ll \sqrt{N}$, and that
    \[
    |P(G)|=\big|\braket{{\bm u}_{j_1},A{\bm u}_{j_1}}\braket{{\bm u}_{j_2},A{\bm u}_{j_2}}+2|\braket{{\bm u}_{j_1},A{\bm u}_{j_2}}|^2\big|\lesssim \frac{N^\xi}{N^{2/3}} \braket{A^2}
    \]
    by \eqref{eq:apriori}.
    We point out that to go from the first to the second line of \eqref{eq:secondstep11} we also used that $\sum_{j_2}a_{i_2 j_2}(t)\lesssim 1$ on $\widehat{\Omega}$. This concludes the proof that the last line of \eqref{eq:newb} is bounded by $N^{1+n\xi}\eta\ell^{-1}$. We thus conclude that
    \begin{equation}
        \begin{split}
            \label{eq:1}
            &\sum_{{\bm j}}^*\left(\prod_{r=1}^n a_{i_r j_r}^\mathcal{S}(t)\right)\big(g_t({\bm x}_{{\bm a}{\bm b}}^{{\bm i} {\bm j}})-\bm1(n\,\, \mathrm{even})\big) \\
            &=\sum_{{\bm j}}^*\left(\prod_{r=1}^n a_{i_r j_r}(t)\right)\left(\frac{N^{n/2}}{\braket{A^2}^{n/2} 2^{n/2}(n-1)!!}\sum_{G\in \mathcal{G}_{{\bm \eta}^{{\bm j}}}}P(G)-\bm1(n\,\, \mathrm{even})\right)+\mathcal{O}\left(\frac{N^{1+n\xi}\eta}{\ell}\right).
        \end{split}
    \end{equation}
    
    Proceeding in a similar way, i.e. splitting bulk and edge regimes and using the corresponding bounds for the overlaps, we then add back the missing indices in the summation in the second line of \eqref{eq:1}:
    \begin{equation}
        \begin{split}
            \label{eq:2}
            &\sum_{{\bm j}}^*\left(\prod_{r=1}^n a_{i_r j_r}(t)\right)\left(\frac{N^{n/2}}{\braket{A^2}^{n/2} 2^{n/2}(n-1)!!}\sum_{G\in \mathcal{G}_{{\bm \eta}^{{\bm j}}}}P(G)-\bm1(n\,\, \mathrm{even})\right) \\
            &\quad= \sum_{{\bm j}}\left(\prod_{r=1}^n a_{i_r j_r}(t)\right)\left(\frac{N^{n/2}}{\braket{A^2}^{n/2} 2^{n/2}(n-1)!!}\sum_{G\in \mathcal{G}_{{\bm \eta}^{{\bm j}}}}P(G)-\bm1(n\,\, \mathrm{even})\right) +\mathcal{O}\left(\frac{N^{n\xi}}{N\eta}\right).
        \end{split}
    \end{equation}

    Finally, by \eqref{eq:1}--\eqref{eq:2}, we conclude
    \begin{equation}
        \begin{split}
            \label{eq:3}
            &\sum_{{\bm j}}^*\left(\prod_{r=1}^n a_{i_r j_r}^\mathcal{S}(t)\right)\big(g_t({\bm x}_{{\bm a}{\bm b}}^{{\bm i} {\bm j}})-\bm1(n\,\, \mathrm{even})\big) \\
            &= \sum_{{\bm j}}\left(\prod_{r=1}^n a_{i_r j_r}(t)\right)\left(\frac{N^{n/2}}{\braket{A^2}^{n/2} 2^{n/2}(n-1)!!}\sum_{G\in \mathcal{G}_{{\bm \eta}^{{\bm j}}}}P(G)-\bm1(n\,\, \mathrm{even})\right) +\mathcal{O}\left(\frac{N^{n\xi}}{N\eta}+\frac{N^{1+n\xi}\eta}{\ell}\right),
        \end{split}
    \end{equation}
    which is exactly the same of \cite[Eq. (4.43)]{MR4413210}. Given \eqref{eq:3}, the remaining part of the proof of this proposition is completely analogous to the proof of \cite[Proposition 4.5]{MR4413210}, the only difference is that now in \cite[Eq. (4.48)]{MR4413210}, using that $|m_t(z_i)|\lesssim 1$ uniformly in $0\le t\le T$, we will have an additional error term
    \[
    \begin{split}
        \frac{N^{n/2}}{\braket{A^2}^{n/2}}\sum_{r=1}^n\sum_{k_1+\dots+k_r=n}^*\prod_{i=1}^r N^{1-k_i}\braket{A^{k_i}}\lesssim \frac{N^{n/2}}{\braket{A^2}^{n/2}}\sum_{r=1}^n\sum_{k_1+\dots+k_r=n}^*\prod_{i=1}^r N^{-k_i/2}N^{-\delta'(k_i/2-1)}\braket{A^2}^{k_i/2}\lesssim N^{-\delta'}
    \end{split}
    \]
    coming from the deterministic term in \eqref{eq:hatomega} (the mixed terms when we use the error term in  \eqref{eq:hatomega} for some terms and the leading term for the remaining terms are estimated in the same way). We remark that in the first inequality we used that
    \[
    \braket{A^{k_i}}\le \norm{A}^{k_i-2}\braket{A^2}\lesssim \big(N^{1-\delta'}\big)^{(k_i-2)/2}\braket{A^2}^{k_i/2}
    \]
    by our assumption $\braket{A^2}\gtrsim N^{-1+\delta'}\norm{A}^2$ from Theorem~\ref{theo:flucque}. Here $\sum_{k_1+\dots+k_r=n}^*$ denotes the summation over all $k_1,\dots, k_r\ge 2$ such that there exists at least one $r_0$ such that $k_{r_0}\ge 3$.
\end{proof}

\subsubsection{Proof of Proposition~\ref{pro:flucque}}

Given the finite speed of propagation estimates in Lemma~\ref{lem:shortlongapprox} and the $L^2$-bound on $h_t$ from Proposition~\ref{prop:mainimprov} as an input, enhancing this bound to an $L^\infty$-bound
and hence proving Proposition~\ref{pro:flucque} is completely analogous to the proof of \cite[Proposition 3.2 ]{MR4413210} presented in \cite[Section 4.4]{MR4413210} and so omitted.

\appendix

\section{Proof of Theorem~\ref{theorem multi G local law}  in the large \(d\) regime}
\label{appendix large d}
The \(d\ge 10\) regime is much simpler mainly because the trivial norm bound $\| G(z)\|\le 1/d$ on every 
resolvent is affordable. In particular,  no system of master inequalities and their meticulously bootstrapped
analysis  are necessary; a simple induction on $k$ is sufficient.
We remark that the argument using these  drastic simplifications is completely analogous\footnote{We point out that the
$N$-scaling here is naturally different from that in~\cite[Appendix B]{MR4479913} simply due to the fact that here we chose
the normalization $\braket{|A_i|^2}=1$ instead of $\norm{A_i}=1$.}
  to~\cite[Appendix B]{MR4479913},
hence we will be very brief.

We now assume that~\eqref{loc1} has been proven up to some $k-1$ in the $d\ge 10$ regime.
Using~\eqref{selfcons} and estimating all resolvent chains in the right hand side of~\eqref{selfcons} by the induction hypotheses
(after splitting $A_kA_1 = \braket{A_kA_1} + (A_kA_1)^\circ$), using the analogue of Lemma~\ref{lemma Psi G prod}
to estimate $\braket{(GA)^{j-1}G}$ in terms of the induction hypothesis, 
 we easily obtain 
    \begin{equation}\label{GAd}
    \braket{(GA)^k-m^kA^k}\Bigl(1+\landauOprec*{\frac{1}{Nd^2}}\Bigr) = -m \braket{\un{W(GA)^k}} + \landauOprec*{\frac{N^{k/2-1}}{d^k}\frac{1}{Nd^2}}  
    \end{equation}
    in place of~\cref{underline repl lemma}.  In estimating the leading terms in~\eqref{selfcons}
     we used that $|m[z_1, z_k] - m(z_1)m(z_k) |\lesssim d^{-4}$.
      Note that  $N^{k/2-1}/d^k$ is the natural size of the leading deterministic term
    $\braket{m^k A^k}$ under the normalization $\braket{  |A|^2}=1$ and the small factor $1/Nd^2$ represents the
    smallness of the negligible error term.
        We now follow the argument in~\cref{sec loclaw proof} starting from~\eqref{GAM}.
    For the Gaussian term~\cref{firstline} we simply bound
    \begin{equation}
        \abs*{m\frac{\braket{(GA)^{2k}G}}{N^2}}\prec \frac{N^{k-3}}{d^{2k+2}} = \Bigl(\frac{N^{k/2-1}}{d^k} \frac{1}{\sqrt{N}d}\Bigr)^2
    \end{equation}
    indicating a gain of order $1/(\sqrt{N}d)$ over the natural size of the leading term in~\eqref{GAd}; this gives
    the main error term in~\eqref{loc1}.
    The modifications to the non-Gaussian terms~\cref{WGA cum exp}, i.e. the estimates
    of~\eqref{Xid} and~\eqref{Xiod} are similarly straightforward and left to the reader. 
    This completes the proof in the remaining $d\ge 10$ regime.

\section{Green function comparison}
\label{sec:GFT}

The Green function comparison argument is very similar to the one presented in \cite[Appendix A]{MR4413210}, hence we only explain the minor differences.

Consider the Ornstein-Uhlenbeck flow
\begin{equation}\label{eq:OUmain}
    \dif \widehat{W}_t=-\frac{1}{2}\widehat{W}_t \dif t+\frac{\dif \widehat{B}_t}{\sqrt{N}}, \qquad \widehat{W}_0=W,
\end{equation}
with \(\widehat{B}_t\) a real symmetric Brownian motion. Along the OU-flow~\eqref{eq:OUmain} the moments of the entries of $\widehat{W}_t$ remain constant, additionally, this flow adds a small Gaussian component to \(W\), so that for any fixed \(T\) we have
\begin{equation}\label{eq:imprel}
    \widehat{W}_T\stackrel{\mathrm{d}}{=} \sqrt{1-cT}\widetilde{W}+\sqrt{c T}U,
\end{equation}
with \(c=c(T)>0\) a constant very close to one as long as \(T\ll 1\), and \(U,\wt W\) being independent GOE/Wigner matrices. Now consider the solution of the flow~\eqref{eq:matdbm} \(W_t\) with initial condition \(W_0=\sqrt{1-cT}\wt W\), so that 
\begin{equation}\label{eq dist GFT}
    W_{cT}\stackrel{\mathrm{d}}{=}\widehat{W}_T. 
\end{equation}

\begin{lemma}\label{lem:GFT}
    Let \(\widehat{W}_t\) be the solution of~\eqref{eq:OUmain}, and let \(\widehat{{\bm u}}_i(t)\) be its eigenvectors. Then for any smooth test function \(\theta\) of at most polynomial growth, and any fixed \(\epsilon\in(0,1/2)\) there exists an \(\omega=\omega(\theta,\epsilon)>0\) 
    such that for any bulk index \(i\in [\delta N, (1-\delta)N]\) (with \(\delta>0\) from Theorem~\ref{theo:flucque}) 
    and \(t=N^{-1+\epsilon}\) it holds that 
    \begin{equation}\label{eq:GFT}
        \E\theta\left(\sqrt{\frac{N}{2\braket{A^2}}}\braket{\widehat{\bm u}_i(t),A\widehat{\bm u}_i(t)}\right)=\E\theta\left(\sqrt{\frac{N}{2\braket{A^2}}}\braket{\widehat{\bm u}_i(0),A\widehat{\bm u}_i(0)}\right)+\mathcal{O}\left(N^{-\omega}\right).
    \end{equation}
\end{lemma}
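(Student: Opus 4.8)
The plan is to follow the Green function comparison (GFT) scheme of~\cite[Appendix A]{2103.06730} essentially verbatim, indicating only where the replacement of the operator norm \(\norm{A}\) by the Hilbert--Schmidt norm \(\braket{A^2}^{1/2}\) enters. Write \(X_i(s):=\sqrt{N/(2\braket{A^2})}\,\braket{\widehat{\bm u}_i(s),A\widehat{\bm u}_i(s)}\). Along the Ornstein--Uhlenbeck flow~\eqref{eq:OUmain} the first two moments of the entries of \(\widehat W_s\) are preserved while the higher cumulants decay exponentially, and by Theorem~\ref{thm:eth} (applied to \(\widehat W_s\), which is again a Wigner matrix up to rescaling) we have \(X_i(s)\prec 1\) uniformly in \(s\in[0,t]\); hence, after a standard truncation of the polynomially growing \(\theta\), we may assume \(\theta\) and all its derivatives bounded, and it suffices to prove
\[
\int_0^t\abs[\big]{\partial_s\E\theta(X_i(s))}\,\dif s\lesssim N^{-\omega},\qquad t=N^{-1+\epsilon},
\]
for some \(\omega=\omega(\theta,\epsilon)>0\), which then yields~\eqref{eq:GFT}.

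For the continuity estimate one differentiates \(\E\theta(X_i(s))\) in \(s\) using the matrix It\^o formula for~\eqref{eq:OUmain} together with the eigenvalue/eigenvector perturbation formulas for the OU evolution, and integrates by parts against the law of \(\widehat W_s\) by a cumulant expansion, exactly as in~\cite[Appendix A]{2103.06730}. The second-cumulant contribution cancels, since the OU flow leaves \(\E\abs{w_{ab}}^2\) invariant; thus the leading terms come from the third and higher cumulants \(\kappa_n^{\mathrm d},\kappa_n^{\mathrm{od}}\), \(n\ge 3\), each carrying an extra factor \(N^{-(n-1)/2}\). After performing the derivatives, every resulting term of \(\partial_s\E\theta(X_i(s))\) is, schematically, a product of: powers of \(\theta^{(p)}(X_i(s))\); the cumulant prefactor; an \(ab\)-summation of monomials in the overlaps \(\braket{\widehat{\bm u}_k(s),A\widehat{\bm u}_l(s)}\) and the gaps \((\lambda_k(s)-\lambda_l(s))^{-1}\); and the normalization \((N/\braket{A^2})^{p/2}\) inherited from \(X_i\). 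On \(\widetilde{\Omega}_\xi\) the eigenvalue gaps are controlled by rigidity, while on \(\widehat{\Omega}_{\omega,\xi}\) the overlaps obey~\eqref{eq:apriori}, a consequence of the \(k=2\) case of Theorem~\ref{theorem multi G local law} via the spectral decomposition, which supplies exactly one factor \(\braket{A^2}^{1/2}\) per overlap. Power counting then proceeds precisely as in~\cite{2103.06730}: the \(\braket{A^2}^{1/2}\)-factors from the overlaps cancel the \(\braket{A^2}^{-1/2}\)-factors from the normalization, and the \(N^{-(n-1)/2}\)-gains from the higher cumulants, together with the mesoscopic cutoffs and the Ward-type improvements hidden in~\eqref{eq:apriori}, dominate the \(N\)-powers coming from the entrywise summations, giving the displayed bound.

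The only genuinely new point is this matching of \(\braket{A^2}^{1/2}\) against \(\norm{A}\). In~\cite{2103.06730} the hypothesis \(\norm{A}\lesssim 1\) was in force, so there \(\braket{A^2}\sim\norm{A}^2\); here \(\braket{A^2}\) may be as small as \(N^{-1+\delta'}\norm{A}^2\), which is precisely why we must use the Hilbert--Schmidt-normalized local laws (Theorem~\ref{theorem multi G local law} and Corollary~\ref{corol}) and the sharpened a priori bound~\eqref{eq:apriori}. In the finitely many places where a product \(A\cdots A\) of \(k\) copies of \(A\) must be split into its tracial and traceless parts, the elementary inequality \(\abs{\braket{A^{k}}}\le\norm{A}^{k-2}\braket{A^2}\le N^{(1-\delta')(k-2)/2}\braket{A^2}^{k/2}\), valid under the assumption \(\braket{A^2}\gtrsim N^{-1+\delta'}\norm{A}^2\) of Theorem~\ref{theo:flucque}, supplies the missing \(\braket{A^2}^{1/2}\)-factors while leaving a positive power \(N^{-\delta'(k-2)/2}\) to spare. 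The main obstacle is therefore purely a matter of bookkeeping: verifying that, with the weaker normalization and the merely non-finite-rank condition on \(A\), the cumulant gains \(N^{-(n-1)/2}\) and the Ward improvements \(\sqrt{\rho/N\eta}\) still beat every positive \(N\)-power, uniformly over spectral parameters with \(\Im z\) down to \(N^{-1+\omega}\); once this is checked for the third cumulant the higher cumulants are strictly easier, and the remainder of the argument of~\cite[Appendix A]{2103.06730} carries over without change.
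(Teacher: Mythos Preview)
Your sketch diverges from the paper's proof at a critical point and the divergence hides a genuine gap. You propose to differentiate \(\E\theta(X_i(s))\) directly, where \(X_i(s)=\sqrt{N/(2\braket{A^2})}\,\braket{\widehat{\bm u}_i(s),A\widehat{\bm u}_i(s)}\), via eigenvector perturbation formulas; the resulting terms then involve inverse gaps \((\lambda_k(s)-\lambda_l(s))^{-1}\), which you claim are ``controlled by rigidity on \(\widetilde{\Omega}_\xi\)''. This is false for nearby eigenvalues: rigidity only yields \(|\lambda_k-\lambda_l|\gtrsim |k-l|/N - N^{-1+\xi}\), useless for \(|k-l|\lesssim N^\xi\), so the singular gap factors in the eigenvector moment flow are not controlled this way. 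This is precisely why the standard GFT scheme, including~\cite[Appendix A]{2103.06730} which you cite, does \emph{not} differentiate eigenvector overlaps directly.

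The paper instead first reduces, via level repulsion (cf.~\cite[Lemma 5.2]{MR3034787}), the eigenvector statement to a statement about resolvents: it suffices to compare \(\E\theta(\sqrt{N}\braket{A^2}^{-1/2}\braket{\Im G_t(z)A})\) at times \(0\) and \(t\), uniformly for \(z=E+\mathrm{i}\eta\) with \(E\) in the bulk and \(\eta\ge N^{-1-\zeta}\). It\^o's formula and cumulant expansion applied to this resolvent functional produce only factors of the form \((G_tAG_t)_{ab}\) and \((G_t)_{ab}\), no inverse gaps. The only new input compared to~\cite{2103.06730} is the entrywise bound \(|(G(z_1)AG(z_2))_{ab}|\lesssim N^{2/3+\xi+2\zeta}\braket{A^2}^{1/2}\), obtained by the spectral decomposition together with the edge-uniform overlap bound \(|\braket{\bm u_i,A\bm u_j}|\prec N^{-1/3}\braket{A^2}^{1/2}\) from~\eqref{eq:apriori}; this is \(N^{1/6}\) weaker than the corresponding estimate in~\cite{2103.06730} but still sufficient for the power counting in~\eqref{eq:derboundgft}. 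Your remarks about splitting \(\braket{A^k}\) via the hypothesis \(\braket{A^2}\gtrsim N^{-1+\delta'}\norm{A}^2\) are not needed here; that inequality enters the DBM part of the argument, not the GFT.
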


We now show how to conclude Theorem~\ref{theo:flucque} using the GFT result from Lemma~\ref{lem:GFT}. Choose \(T=N^{-1+\epsilon}\) and \(\theta(x)=x^n\)  for some integer \(n\in\N\), then we have
\begin{equation}\label{eq:firststep}
    \begin{split}
        \E\left[\sqrt{\frac{N}{2\braket{A^2}}}\braket{{\bm u}_i,A {\bm u}_i}\right]^n&=\E\left[\sqrt{\frac{N}{2\braket{A^2}}}\braket{\widehat{\bm u}_i(T),A\widehat{\bm u}_i(T)}\right]^n+\mathcal{O}\left(N^{-c}\right)\\
        &= \E\left[\sqrt{\frac{N}{2\braket{A^2}}}\braket{{\bm u}_i(cT),A{\bm u}_i(cT)}\right]^n +\mathcal{O}\left(N^{-c}\right)\\
        &= \bm1(n\,\, \mathrm{even})(n-1)!!+\mathcal{O}\left(N^{-c}\right),
    \end{split}
\end{equation} 
for some small \(c=c(n,\epsilon)>0\), with \({\bm u}_i,\wh{\bm u}_i(t),\bm u_i(t)\) being the eigenvectors of \(W,\wh W_t,W_t\), respectively. This concludes the proof of Theorem~\ref{theo:flucque}. Note that in~\eqref{eq:firststep} we used Lemma~\ref{lem:GFT} in the first 
step,~\eqref{eq dist GFT}
in the second step and~\eqref{eq:mainbthissec} for  ${\bm \eta}$ such that $\eta_i=n$ and $\eta_j=0$ for any $j\ne i$ in the third step, using that in distribution the eigenvectors of \(W_{cT}\) are equal to those of \(\wt W_{cT/(1-cT)}\) with \(\wt W_t\) being the solution to the DBM flow with initial condition \(\wt W_0=\wt W\). 

\begin{proof}[Proof of Lemma~\ref{lem:GFT}]
    
    The proof of this lemma is very similar to the proof of \cite[Appendix A]{MR4413210}.
    The differences come from the somewhat different local law. First, we now systematically carry the
    factor $\braket{A^2}$ instead of $\|A\|^2=1$ as in~\cite[Appendix A]{MR4413210}, but this is automatic. Second, since the current overlap
    bound~\eqref{eq:apriori} is somewhat weaker near the edge, we need to check that for  resolvents 
    with spectral parameters in the bulk this will make no essential difference. This is the main purpose
    of repeating the standard proof from~\cite[Appendix A]{MR4413210} in some details.
    
    As a consequence of the repulsion of the eigenvalues (level repulsion), as in~\cite[Lemma 5.2]{MR3034787}, to understand the overlap \(\sqrt{N}\braket{A^2}^{-1/2}\braket{{\bm u}_i,A {\bm u}_i}\) it is enough to understand functions of \(\sqrt{N}\braket{A^2}^{-1/2}\braket{\Im G(z)A}\) with \(\Im z\) slightly  below \(N^{-1}\), i.e.\ the local eigenvalue spacing. In particular, to prove~\eqref{eq:GFT} it is enough to show that
    \begin{equation}\label{eq:gftres}
        \sup_{E\in(-2+\delta,2-\delta)}\abs*{\E\theta(\sqrt{N}\braket{A^2}^{-1/2}\braket{\Im G_t(z)A})-\E\theta(\sqrt{N}\braket{A^2}^{-1/2}\braket{\Im G_0(z)A})}\lesssim N^{-\omega},
    \end{equation}
    for $t=N^{-1+\epsilon}$, \(z=E+\ii \eta\) for some \(\zeta>0,\omega>0\) and all \(\eta\ge N^{-1-\zeta}\), c.f.~\cite[Section 4]{MR4164858} and~\cite[Appendix A]{MR3606475}.
    
    To prove this we define
    \begin{equation}\label{eq:R}
        R_t:=\theta(\sqrt{N}\braket{A^2}^{-1/2}\braket{\Im G_t(z)A}),
    \end{equation}
    and then use It\^o's formula:
    \begin{equation}\label{eq:ito}
        \E \frac{\dif R_t}{\dif t}=\E\left[-\frac{1}{2}\sum_\alpha w_\alpha(t)\partial_\alpha R_t+\frac{1}{2}\sum_{\alpha,\beta}\kappa_t(\alpha,\beta)\partial_\alpha\partial_\beta R_t\right],
    \end{equation}
    where \(\alpha,\beta\in [N]^2\) are double indices, \(w_\alpha(t)\) are the entries of \(W_t\), and \(\partial_\alpha:=\partial_{w_\alpha}\). Here 
    \begin{equation}
        \kappa_t(\alpha_1,\dots,\alpha_l):=\kappa(w_{\alpha_1}(t), \dots, w_{\alpha_l}(t))
    \end{equation}
    denotes the joint cumulant  of \(w_{\alpha_1}(t), \dots, w_{\alpha_l}(t)\), with \(l\in \mathbf{N}\). Note that by~\eqref{moments} it follows that \(|\kappa_t(\alpha_1,\dots,\alpha_l)|\lesssim N^{-l/2}\) uniformly in \(t \ge 0\).
    
    By cumulant expansion we get 
    \begin{equation}
        \label{eq:thirdhigher}
        \E\frac{\dif R_t}{\dif t}=\sum_{l= 3}^R\sum_{\alpha_1,\ldots,\alpha_l}\kappa_t(\alpha_1,\ldots,\alpha_l)\E[\partial_{\alpha_1}\cdots\partial_{\alpha_l}R_t]+\Omega(R),
    \end{equation}
    where \(\Omega(R)\) is an error term, easily seen to be negligible as every additional derivative gains a further factor of \(N^{-1/2}\). Then to estimate \eqref{eq:thirdhigher} we realize that $\partial_{ab}$-derivatives of $\braket{\Im G A}$ result in 
    factors of the form $(GAG)_{ab}$, $(GAG)_{aa}$. For such factors we
    use that
    \begin{equation}
        \label{eq:impr}
        \begin{split}
            \big|(G_t(z_1)AG_t(z_2))_{ab}\big|&=\left|\sum_{ij}\frac{{\bm u}_i(a)
            \braket{{\bm u}_i, A{\bm u}_j}{\bm u}_j(b)}{(\lambda_i-z_1)(\lambda_j-z_2)}\right| \\
            &\lesssim N^{2/3+\xi}\braket{A^2}^{1/2}\left(\frac{1}{N}\sum_i \frac{1}{|\lambda_i-z_1|}\right)\left(\frac{1}{N}\sum_i \frac{1}{|\lambda_i-z_2|}\right)\\
            &\lesssim N^{2/3+\xi+2\zeta}\braket{A^2}^{1/2},
        \end{split}
    \end{equation}
    where we used that $\norm{\bm u_i}_\infty\lesssim N^{-1/2+\xi}$, $|\braket{{\bm u}_i, A{\bm u}_j}|\le N^{-1/3+\xi}$, for any $\xi>0$, uniformly in the spectrum by~\cite{MR2871147}, and Theorem~\ref{thm:eth},
     respectively. We remark that in \cite[Eq. (A.11)]{MR4413210} we could bound $(G_t(z_1)AG_t(z_2))_{ab}$ by $N^{1/2+\xi+2\zeta}$ as a consequence of the better bound on $|\braket{{\bm u}_i, A{\bm u}_j}|$ for indices close to the edge (however in \cite[Eq. (A.11)]{MR4413210} we did not have $\braket{A^2}^{1/2}$). While our estimate on $(GAG)_{ab}$ is now by a factor $N^{1/6}$ weaker, this is still sufficient to complete
    the Green function comparison argument.

    Indeed, using \eqref{eq:impr} and that $|(G_t)_{ab}|\le N^\zeta$, for any $\zeta>0$, we conclude that
    \begin{equation}\label{eq:derboundgft}
        \left|\partial_{\alpha_1}\dots \partial_{\alpha_l}\frac{\sqrt{N}}{\braket{A^2}}\braket{\Im G_t A}\right|\le N^{1/3+(l+3)(\zeta+\xi)},
    \end{equation}
    and so, together with
    \[
    \sum_{\alpha_1,\ldots,\alpha_l} \abs{\kappa_t(\alpha_1,\ldots,\alpha_l)}\lesssim N^{2-l/2},
    \]
    by \eqref{eq:thirdhigher}, we conclude \eqref{eq:gftres}.
\end{proof}

\printbibliography%
\end{document}